\theoremstyle{plain}
\newtheorem{theorem}{Theorem}
\newtheorem{lemma}{Lemma}
\newtheorem{corollary}[lemma]{Corollary}
\theoremstyle{remark}
\newtheorem{remark}{Remark}
\theoremstyle{definition}
\newtheorem{definition}{Definition}
\newcommand{\fofl}[1]{\mathrm{#1}}
\newcommand{\recfun}[1]{\mathtt{#1}}
\newcommand{\fast}{\recfun{F}}
\newcommand{\thr}[1]{\mathsf{#1}}
\newcommand{\PA}{\thr{PA}}
\newcommand{\EA}{\thr{EA}}
\newcommand{\IDO}{{\thr{I}\Delta_0}}
\newcommand{\IDOE}{{\IDO{+}\fofl{Exp}}}
\newcommand{\ISi}[1]{{\thr{I}\Sigma_{#1}}}
\newcommand{\PAsl}[1]{{\thr{PA}{\upharpoonright}_{#1}}}
\newcommand{\mthr}[1]{\mathsf{#1}}
\newcommand{\GL}{\mthr{GL}}
\newcommand{\GLT}{\mthr{GLT}}
\newcommand{\GLtwo}{\GL\mathsf{2}}
\newbox\gnBoxA
\newdimen\gnCornerHgt
\newdimen\gnArgHgt
\def\gnmb #1{%
\setbox\gnBoxA=\hbox{$#1$}%
\gnArgHgt=\ht\gnBoxA%
\ifnum     \gnArgHgt<\gnCornerHgt \gnArgHgt=0pt%
\else \advance \gnArgHgt by -\gnCornerHgt%
\fi \raise\gnArgHgt\hbox{$\ulcorner$} \box\gnBoxA %
\raise\gnArgHgt\hbox{$\urcorner$}}
\newcommand{\model}[1]{\mathfrak{#1}}
\newcommand{\krmod}[1]{\mathcal{#1}}
\newcommand{\TI}[2]{{\fofl{TI}_{#1}\mbox{-}#2}}
\newcommand{\conv}[1]{{#1{\downarrow}}}
\newcommand{\dive}[1]{{#1{\uparrow}}}
\newcommand{\stepdown}[2][]{\mathrel{\overset{#1}{\underset{#2}{\longrightarrow}}}}
\newcommand{\BoxI}[1][\,]{\raisebox{0pt}{\ooalign{\hfil$\vcenter{\large\hbox{$\square$}}$\hfil\cr
    \hfil$\vcenter{\scriptsize \hbox{$#1$}}$\hfil}}}
\newcommand{\DiamondI}[1][\,]{\raisebox{0pt}{\ooalign{\hfil$\vcenter{\large\hbox{\raisebox{-11pt}{$\Diamond$}}}$\hfil\cr
    \hfil$\vcenter{\scriptsize \hbox{$#1$}}$\hfil}}}
\renewcommand{\Box}{\square}
\newcommand{\BoxSl}{\boxdot}
\newcommand{\DiamondSl}{\ooalign{\hss$\Diamond$\hss\cr \kern0.48ex\raise0.1ex\hbox{$\cdot$}}}
\newcommand{\triangleDual}{\hbox{\raisebox{2.2pt}{$\bigtriangledown$}}}
\newcommand{\dotminus}{\mathbin{\text{\@dotminus}}}
\newcommand{\@dotminus}{%
  \ooalign{\hidewidth\raise1ex\hbox{.}\hidewidth\cr$\m@th-$\cr}%
}
\begin{document}

\title{Slow and Ordinary Provability for \\ Peano Arithmetic}
\author{Paula~Henk\footnote{Institute of Logic, Language, and Computation, Amsterdam. E-mail: \texttt{p.henk@uva.nl}}\; and Fedor~Pakhomov\footnote{Steklov Mathematical Institute, Moscow. E-mail: \texttt{pakhfn@mi.ras.ru}. }{$\,\,$}\footnote{The work of the second author was supported by the Russian Foundation for Basic Research (grant number 15-01-09218).}}

\date{2016}
\maketitle

\begin{abstract} 
The notion of slow provability for Peano Arithmetic ($\PA$) was introduced by 
S.-D.\ Friedman, M.\ Rathjen, and A.\ Weiermann.  They studied the slow consistency statement $\fofl{Con}_{\mathsf{s}}$ asserting that a contradiction is not slow provable in $\PA$.
They showed that
the logical strength of the theory  $\PA+\fofl{Con}_{\mathsf{s}}$ lies strictly between that of $\PA$, and $\PA$ together with its ordinary consistency:
$\PA\subsetneq \PA+\fofl{Con}_{\mathsf{s}}\subsetneq \PA+\fofl{Con}_{\PA}$.

This paper is a further investigation into slow provability and its interplay with ordinary provability in $\PA$. We study three variants of slow provability. 
The associated consistency statement of each of these yields a theory that 
lies strictly between 
$\PA$ and $\PA+\fofl{Con}_{\PA}$ in terms of logical strength. We investigate Turing-Feferman progressions based on these variants of slow provability. For our three notions, the Turing-Feferman progression reaches 
$\PA+\fofl{Con}_{\PA}$ in a different numbers of steps, namely $\varepsilon_0$, $\omega$, and $2$.
For each of the three slow provability predicates, we also determine its joint provability logic with ordinary $\PA$-provability. 

\textit{Keywords:} Peano Arithmetic, Provability Logic, Fast-Growing Hierarchy,  Turing-Feferman Progressions, Slow Consistency

\textit{2010 MSC:}    03F30,  03F15, 03F45, 03F40, 03H15

 \end{abstract}

\section{Introduction}
Slow provability, introduced by S.-D.\ Friedman, $\mbox{M.\ Rathjen}$, and A.\ Weiermann in \cite{FriRatWei13}, is a  
notion of \emph{nonstandard provability} for Peano Arithmetic ($\PA$) -- while \emph{we} know that it 
coincides with ordinary provability for $\PA$, this fact is not verifiable in $\PA$ itself. This paper is a further investigation into the relation between slow and ordinary provability, as seen from the perspective of $\PA$.

The definition of slow provability relies on a \emph{fast-growing hiearachy}, also known as the \emph{extended Grzegorczyk hieararchy}. What we mean by this is, following \cite{FriRatWei13}, an 
ordinal-indexed
family of recursive functions $\{\fast_\alpha\}_{\alpha\leq \varepsilon_0}$.
The functions $\{\fast_\alpha\}_{\alpha <\omega}$ are closely related to a family of classes of functions known as the 
Grzegorczyk hierarchy (\cite{Grz55}). They are primitive recursive, and furthermore every primitive recursive function is dominated by some function in $\{\fast_\alpha\}_{\alpha <\omega}$. L\"ob and Wainer (\cite{LobWai70I}, \cite{LobWai70II})
extended the hierarchy into the transfinite. The exact version of the fast-growing hierarchy used in \cite{FriRatWei13} was introduced by Solovay and Ketonen (\cite{KetSol81}). 
The function $\fast_{\varepsilon_0}$ results from diagonalizing over the functions $\{\fast_{\omega_n}\}_{n\in\omega}$, 
each of which is provably total in $\PA$, and is not provably total in $\PA$ itself.  
This makes it interesting to consider the following r.e.\ theory: 
\begin{equation}\label{carcavelos}
\PAsl{\recfun{\fast_{\varepsilon_0}}} := \left\{\ISi{n}\mid \conv{\fast_{\varepsilon_0}(n)}\right\},
\end{equation}
where $\ISi{n}$ is as usual $\PA$ with the induction schema restricted to $\Sigma_n$-formulas. 
Since $\fast_{\varepsilon_0}$  is total, we know that 
$\PAsl{\fast_{\varepsilon_0}}$ and $\PA$ have exactly the same theorems. Arguing in $\PA$, on the other hand, 
the totality of $\fast_{\varepsilon_0}$ cannot be assumed, and thus
$\PAsl{\fast_{\varepsilon_0}}$ might seem to be a weaker theory than $\PA$.  As shown in \cite{FriRatWei13}, 
there exist indeed models of $\PA$ where a contradiction is provable in 
$\PA$ but not in $\PAsl{\recfun{\fast_{\varepsilon_0}}}$.


A notion of slow provability can be associated to any recursive function $\recfun{f}$ not provably total in $\PA$, 
by considering the 
theory 
\begin{equation}
\PAsl{\recfun{f}}:=\left\{\ISi{n}\mid \conv{\recfun{f}(n)}\right\}.
\end{equation} 
Since the equivalence of 
$\PA$ and $\PAsl{\recfun{f}}$ might not be verifiable in $\PA$, 
it is interesting 
to ask how exactly do the two theories relate to each other,
as seen from the perspective of $\PA$. This paper offers some ways of answering the above question. 

\subsection{Results of this paper}


Denote by $\fofl{Con}_{\PA}$, the usual consistency statement for $\PA$, and by 
$\fofl{Con}_{\recfun{f}}$ the statement expressing that a contradiction is not provable in $\PAsl{\recfun{f}}$.
As mentioned above, 
$\fofl{Con}_{\recfun{f}}$ need not be provably equivalent to $\fofl{Con}_{\PA}$.
However it is conceivable that by iterating $\fofl{Con}_{\recfun{f}}$ sufficiently many times, a statement 
equivalent to $\fofl{Con}_{\PA}$ is reached. We explore this possibility by
considering
\emph{transfinite iterations} of slow consistency statements. 
Given a non-zero
ordinal $\alpha\leq \varepsilon_0$, the $\alpha$-iteration $\fofl{Con}^{\alpha}_{\recfun{f}}$ of $\fofl{Con}_{\recfun{f}}$ is informally defined as the consistency statement for the theory
\begin{equation}\label{tr_con}
\PAsl{\recfun{f}} + \{\fofl{Con}^{\beta}_{\recfun{f}}\mid 0< \beta<\alpha\}.
\end{equation}
We adopt the provability logic approach to transfinite iterations, developing the 
notion of a transfinite iteration $\fofl{Pr}^{\alpha}(x)$ of a provability predicate $\fofl{Pr}(x)$ along a Kalmar elementary well-ordering. We show that these iterations satisfy the Hilbert-Bernays-Löb derivability conditions, and can thus be considered as provability predicates themselves (Section \ref{Transfinite_Section}). 

We show that the $\varepsilon_0$-iteration of $\fofl{Con_{\fast_{\varepsilon_0}}}$ is 
equivalent to $\fofl{Con}_{\PA}$ (Theorem \ref{Ite_Corollary}), thus answering a 
question raised in \cite[Remark 4.4]{FriRatWei13}. We also show that a
small index shift in the definition of
$\PAsl{\recfun{\fast_{\varepsilon_0}}}$ yields a slow consistency statement whose $\omega$-iteration
is already equivalent to  $\fofl{Con}_{\PA}$ (Theorem \ref{omegait}). While finishing writing this paper, the authors learned that the above results are also contained in Anton Freunds recent paper \cite{Fre16}. The results of our paper were obtained independently from the latter. 

We also introduce a variant of slow provability that can be seen as the \emph{square root} of ordinary 
$\PA$-provability, in the sense that 
already the two-fold iteration of the associated slow consistency statement 
is equivalent to $\fofl{Con}_{\PA}$ (Theorem \ref{Square_root_theorem}). Our slow provability variant is the first example of such a
provability predicate in the context of $\PA$.

For each of our three notions of slow provability, we determine its joint \emph{provability logic} with ordinary $\PA$-provability. 
While the slow provability predicate studied in \cite{FriRatWei13} and its shifted version mentioned above behave very differently when it comes to transfinite iterations, their joint provability logic with ordinary provability is the same, namely 
Lindstr\"om logic (Theorem \ref{GLTcompl}). It was shown in \cite{Lin06} that the
latter is also the joint provability logic of ordinary and Parikh provability, which can be seen as a \emph{speeded up} version of ordinary $\PA$-provability. Our proof or arithmetical completeness is rather general and works for a large class of pairs of provability predicates, including ordinary and Parikh provability.

\subsection{Overview of this paper}
Sections \ref{ArTheories_Section} and \ref{Ordinal_Section} contain the basic results and notions used in the paper.
Section \ref{Transfinite_Section} introduces
transfinite iterations of provability predicates. The notion of slow provability, along with some results from \cite{FriRatWei13}, 
forms the content of 
Section \ref{slowprovability}. 
In Section \ref{Converting_Section} we show that 
in some cases provability in  $\PA$ implies a certain transfinite iteration of slow provability. 
Section \ref{Models_Section} deals with the converse.  The joint provability logic of slow and ordinary provability is determined in Section \ref{ProvLogic_Section}. The material in this section relies only on sections \ref{ArTheories_Section} and \ref{slowprovability}.

\subsection{History and context}
We point out some developments related to the subject matter of this paper.  

\subsubsection{Nonstandard notions of provability for $\PA$}\label{nonstaPA}
The method of arithmetization developed by G\"odel allows $\PA$ to talk about basic syntactical notions. 
In particular, there is an arithmetical formula $\fofl{Pr}_{\thr{\PA}}(x)$, the so-called \emph{provability predicate}, that expresses basic facts about provability in $\thr{\PA}$. Writing $\fofl{Con}_{\thr{\PA}}$ for the sentence $\lnot \fofl{Pr}_{\thr{\PA}}(\bot)$, 
G\"odel's Second Incompleteness Theorem states that $\fofl{Con}_{\thr{\PA}}$ is not provable in $\PA$.

Since $\fofl{Pr}_{\thr{\PA}}(x)$ is, \emph{prima facie}, an arithmetical formula, one may justifiably ask what exactly is meant by calling it a provability predicate. Could there be another  provability predicate whose associated consistency statement \emph{is} provable in $\PA$? Likewise, which properties of $\fofl{Pr}_{\thr{\PA}}(x)$ does the proof of the Second Incompleteness Theorem rely on? 

Such questions were for the first time investigated by Feferman in his influential paper \cite{Fef60}. 
He constructs a predicate $\fofl{Pr}^{*}_{\PA}(x)$ that has the same extension as $\fofl{Pr}_{\thr{\PA}}(x)$ on the natural numbers, whose associated consistency statement is however provable in $\PA$.
The existence of such a \emph{nonstandard provability predicate} illustrates the need for a more careful formulation of the Second Incompleteness Theorem. 

In order to demonstrate the difficulty of singling out one ``standard" provability predicate for $\PA$, Feferman (\cite[Theorem 7.4, 7.5]{Fef60}) provides a rather
general method for modifying a given provability predicate $\fofl{Pr}(x)$, so as to obtain new provability predicates 
$\fofl{Pr}'(x)$ and $\fofl{Pr}''(x)$ for the same theory, whose associated consistency statements lie strictly below and above the original one respectively:
\begin{equation}
\PA\subsetneq \PA+\fofl{Con}'\subsetneq \PA+\fofl{Con} \subsetneq \PA+\fofl{Con}''.
\end{equation}
In particular, we obtain a theory lying between $\PA$ and $\PA+\fofl{Con}_{\PA}$ in terms of logical strength.
Since the above method relies on self-reference, in the form of a Rosser-style construction, it is reasonable to ask whether a \emph{natural} theory with this property can also be found. The theory $\PA+\fofl{Con_{\fast_{\varepsilon_0}}}$, obtained by adding to $\PA$ the statement of its slow consistency, may be seen as the first example of such a theory.

Another example of nonstandard provability is
the so-called \emph{Parikh provability}. An arithmetical sentence $\varphi$ is said to be Parikh provable if it is provable in $\PA$ together with Parkih's rule, where the latter allows 
one to infer $\varphi$ from the sentence $\fofl{Pr}_{\PA}(\varphi)$. 
Since Parikh's rule is admissible in $\PA$, adding it to $\PA$ does not yield new theorems. As shown in \cite{Par71}, it does yield speed-up, meaning that some theorems have much shorter proofs when Parikh's rule is allowed. The equivalence of 
Parikh provability and ordinary provability is however not verifiable in $\PA$.

\subsubsection{Slow provability for weak theories}
A notion of slow provability in the context of Kalmar Elementary Arithmetic ($\thr{EA}$) was introduced by Visser in 
\cite{Vis12}. He uses a superexponential (not provably total in $\thr{EA}$) function
in order to modify the standard provability predicate of $\thr{EA}$. As in the case of slow provability for $\PA$, the associated slow 
consistency statement 
lies strictly between $\thr{EA}$ and $\thr{EA}+\fofl{Con}_{\thr{EA}}$ in terms of logical strength.

A version of square root provability for
$\IDOE$ was also found by Visser: it is shown in \cite{Vis90} that cut-free or tableaux provability serve as the square root of ordinary provability in the context of $\IDOE$.

\subsubsection{Provability logic}
The idea of viewing $\fofl{Pr}_{\PA}(x)$ as a modal operator $\Box$ goes back to G\"odel (\cite{God33}). Hilbert and Bernays formulated certain conditions for $\fofl{Pr}_{\PA}(x)$ that would suffice for the proof of the Second Incompleteness Theorem. These were later simplified by L\"ob, and are now referred to as the \emph{Hilbert-Bernays-L\"ob derivability conditions}: 
\begin{enumerate}
\item $\PA \vdash \varphi \Rightarrow \PA \vdash \Box \varphi$
\item $\PA\vdash \Box(\varphi\to \psi) \to (\Box \varphi \to \Box \psi)$
\item $\PA\vdash \Box \varphi \to \Box \Box \varphi$
\end{enumerate}
The system $\GL$ of propositional modal logic is axiomatized by adding to the basic modal logic $\mathsf{K}$ the following, known as L\"ob's axiom: 
$\Box (\Box A\to A)\to \Box A$.
It was proven by Solovay (\cite{Sol76}) that
$\GL$ is the provability logic of $\PA$: its theorems are exactly the propositional schemata involving $\fofl{Pr}_{\PA}(x)$ that are provable in $\PA$. 

Solovay's method has been used to apply modal logic to study other metamathematical predicates besides $\fofl{Pr}_{\PA}(x)$. 
Shavrukov (\cite{Sha94}) determined  
the joint provability logic of ordinary and Feferman provability $\fofl{Pr}^*_{\PA}(x)$. The joint provability logic of ordinary and Parikh provability was established by Lindstr\"om (\cite{Lin06}).

\subsubsection{Turing-Feferman progressions}
The idea of transfinite iterations of consistency statements goes back to Turing (\cite{Tur39}).
Given a sufficiently strong $\Sigma_1$-sound theory $\thr{T}$, 
consider the sequence of theories given by: $\thr{T}^0:= \thr{T}$, and $\thr{T}^{n+1}:= \thr{T}^{n}+\fofl{Con}_{\thr{T}^{n}}$ for all $n$.
It follows from the Second Incompleteness Theorem that each $\thr{T}^{n+1}$ is a strictly stronger theory than $\thr{T}^{n}$.
In his doctoral thesis \cite{Tur39}, Turing introduced the method of transfinite iterations, allowing one to extend the above sequence into the transfinite.
Given a theory $\thr{T}$ and an ordinal $\alpha$, the theory $\thr{T}^{\alpha}$ is informally defined as: 
\begin{equation}\label{tr_ite}
\thr{T}^\alpha=\thr{T}+\fofl{Con}({\bigcup\limits_{\beta<\alpha}\thr{T}^{\beta}}).
\end{equation}
Returning to transfinite iterations of consistency statements introduced informally in (\ref{tr_con}) above, we note that
using the notation of (\ref{tr_ite}), we have that for 
$\alpha>0$,  
$\PAsl{\recfun{f}} + \fofl{Con}_{\recfun{f}}^{\alpha}$ is the theory $(\PAsl{\recfun{f}})^{\alpha}$. 

A proper construction of the above sequence of theories requires a \emph{recursive ordinal notation system}. 
As shown in 
\cite{Tur39,Fef62}, 
the properties of $\thr{T}^{\alpha}$ for infinite $\alpha$ depend significantly  on the choice of the ordinal notation system. These difficulties will not influence our paper, however, as we shall only consider ordinals 
$\alpha \le \varepsilon_0$, identifying any such $\alpha$ with its representation in Cantor normal form.


%

%
%


\section{Arithmetical theories}\label{ArTheories_Section}

We consider first-order theories formulated in the language $\mathcal{L}$ of arithmetic containing $0$, $\mathsf{S}$ (successor), $+$, $\times$, and $\leq$. As usual, an $\mathcal{L}$-formula is said to be
$\Delta_0$ (equivalently: $\Sigma_0$ or $\Pi_0$) if all its quantifiers are bounded, and 
$\Sigma_{n+1}$($\Pi_{n+1}$) if it is of the form $\exists x_0\ldots x_n\, \varphi$,
with $\varphi$ a $\Pi_n$($\Sigma_n$)-formula. We write $\overline{n}$ for the $\mathcal{L}$-term corresponding to $n$, i.e.\ $0$ followed by $n$ applications of $\mathsf{S}$. Given this, we shall mostly write $n$
instead of $\overline{n}$.

The basic facts concerning $0$, $\mathsf{S}$, $+$, $\times$, and $\leq$ are given by the axioms of the theory $\thr{Q}$ of Robinson Arithmetic 
 (\cite[Definition I.1.1] {HajPud98}). The theory $\thr{Q}$ is $\Sigma_1$-complete, meaning that it proves every true $\Sigma_1$-sentence. 

Our main interest in this article is the theory $\thr{\PA}$ of Peano Arithmetic that results from adding to $\thr{Q}$ the induction schema for all arithmetical formulas. 
As usual, $\ISi{n}$ denotes the fragment of $\PA$ obtained by restricting the induction schema to $\Sigma_{n}$-formulas.
Clearly, $\ISi{n}\subseteq\ISi{n+1}$ for all $n$, and 
$\PA=\bigcup_{n\in\omega} \ISi{n}$. We shall therefore sometimes also write $\ISi{\omega}$ for $\PA$.
Using that satisfaction for $\Sigma_{n}$-formulas is definable in $\ISi{1}$ by a $\Sigma_{n}$-formula, one can show that for all $n>0$, 
$\ISi{n}$ is finitely axiomatisable (\cite[Theorem I.2.52]{HajPud98}, see also Section \ref{meta_IDOE} below). 


As our metatheory, we mostly use $\IDOE$. In order to introduce the latter, we recall that there is a $\Delta_{0}$-formula 
$\varphi_{\recfun{e}}(x,y,z)$ that defines, provably in $\IDO$, the graph of a recursively defined exponentiation function $\recfun{e}(x,y)=x^y$ (\cite[Theorem V.3.15]{HajPud98}), i.e.\ we have:
\begin{align*}
\IDOE&\vdash \varphi_{\recfun{e}}(x,0,z) \leftrightarrow z=1\\
\IDOE&\vdash \varphi_{\recfun{e}}(x,y+1, z) \leftrightarrow \exists w\left(E(x,y,w) \land z=w \cdot x \right)
\end{align*}
The sentence stating the totality of this function:
\begin{equation}\label{exptot}
\forall x \forall y\exists ! z\, \varphi_{\recfun{e}}(x,y,z)
\end{equation}
is not provable in $\IDO$.
The theory $\IDOE$ is the result from adding (\ref{exptot}) as an additional axiom to $\IDO$.
We recall that $\IDOE$ is finitely axiomatizable (\cite[Theorem V.5.6 ]{HajPud98}). 

Since the formula defining exponentiation in $\IDO$ is $\Delta_{0}$, 
$\IDOE$ is a conservative extension of Elementary Arithmetic ($\EA$). By $\EA$, we mean the theory formulated in the language of arithmetic, together with a function symbol $\mathsf{exp}$ for exponentiation. It contains the basic facts concerning $0$, $\mathsf{S}$, $+$, $\times$, $\leq$ and $\mathsf{exp}$, plus induction for all $\Delta_{0}$-formulas of the extended language. $\EA$ is strong enough to formalize almost all of finitary mathematics outside logic.

\subsection{Representing recursive functions in $\IDOE$}\label{IDOErecfun}
It is well-known that a coding of sequences can be carried out inside $\IDOE$. Using that, it is straightforward to show that every primitive recursive relation 
$R$ can be represented inside $\IDOE$ by a $\Sigma_1$-formula $\varphi_R$, in the sense that for all $n_0, \ldots, n_k$, 
\begin{equation}
(n_0, \ldots, n_k)\in R \mbox{     iff     } \IDOE\vdash \varphi_R(\overline{n_0}, \ldots, \overline{n_k}).
\end{equation}
We recall that 
there are primitive recursive functions $T$ and $U$ with the property that for all recursive $\recfun{f}$, there exists some $e$, such that for all $n$,
\begin{equation}\label{univ}
\recfun{f}(n) = U\left(\mu y\, T(e, n, y)\right).
\end{equation}
Thus we can associate to any recursive function $\recfun{f}$ a $\Sigma_1$-formula $\varphi_\recfun{f}$
that defines $\recfun{f}$ in a natural way, say by mimicking its definition in (\ref{univ}).
If $\recfun{f}$ is $k$-ary, then
for all $n_1, \ldots, n_k$, we have that
\begin{align}
&\IDOE\vdash \varphi_{\recfun{f}}(\overline{n_1}, \ldots, \overline{n_k}, \overline{\recfun{f}(n_1, \ldots, n_k)})\\
&\IDOE\vdash \exists ! z\, \varphi_{\recfun{f}}(\overline{n_1}, \ldots, \overline{n_k}, z)
\end{align}
Since $\IDOE$ is $\Sigma_1$-sound, it follows that for any 
recursively enumerable (r.e.) set $A$, there is a $\Sigma_1$-formula $\varphi_A$ such that for all $n$, $n\in A$ if and only if $\IDOE\vdash \varphi_{A}(n)$. In fact, as was first shown in \cite{EhrFef60}, given any extension $\thr{S}$ of $\IDOE$ and any r.e.\ set $A$, there is a $\Sigma_1$-formula $\varphi_A$ such that for all $n$, $n\in A$ if and only if $\thr{S}\vdash \varphi_{A}(n)$.


Given a $k$-ary recursive function $\recfun{f}$, we denote by $\conv{\recfun{f} (x_1, \ldots, x_k)}$ the formula 
$\exists y\, \varphi_{\recfun{f}}(x_1, \ldots, x_k, y)$, 
and say that $\recfun{f}$ \emph{converges} on input $x_1, \ldots, x_k$. Similarly, we denote by 
$\dive{\recfun{f} (x_1, \ldots, x_k)}$ the formula $\lnot\conv{\recfun{f} (x_1, \ldots, x_k)}$, and say that $\recfun{f}$ \emph{diverges} on input $x_1, \ldots, x_n$. We use $\conv{\recfun{f}}$ as shorthand for
$$\forall x_1\ldots x_k\, \conv{\recfun{f} (x_1, \ldots, x_k)},$$ and $\dive{\recfun{f}}$ as shorthand for 
$\lnot\conv{\recfun{f}}$.  
  
A recursive function $\recfun{f}$ is said to be  \emph{provably recursive} in a theory $\thr{S}\supseteq \IDOE$ if 
$\thr{S}\vdash \conv{\recfun{f}}$.
The provably recursive functions of $\IDOE$ are exactly the Kalmar elementary functions. The class of Kalmar elementary functions is the smallest class containing successor, zero, projection, addition, multiplication, substraction, and closed under 
composition as well as bounded sums and bounded products (\cite{Ros84}).
For a characterization of the provably recursive functions of $\ISi{n}$ for $n \geq 1$, see 
Theorem \ref{prtot} in Section \ref{Ordinal_Section} below.

\subsection{Metamathematics in $\IDOE$}\label{meta_IDOE}

It is well-known that arithmetization of syntax can be carried out in $\IDOE$.
We assume as given some standard g\"odelnumbering of $\mathcal{L}$-formulas, and 
write $\gnmb{\varphi}$ for the g\"odelnumber of $\varphi$. 
We shall often identify a formula with its g\"odelnumber, writing $\psi(\varphi)$ instead of $\psi(\gnmb{\varphi})$.

Let $\thr{S}$ be a r.e.\ extension of $\IDOE$. As explained in Section \ref{IDOErecfun}, 
the set of axioms of $\thr{S}$ can be represented in $\IDOE$  by a $\Sigma_1$-formula $\varphi_{\thr{S}}$.
Using the latter, one can define in a natural way a $\Sigma_1$-formula $\mathrm{Pr}_{\varphi_{\thr{S}}}(x)$ representing provability in $\thr{S}$ inside $\IDOE$ (\cite[Definition 4.1]{Fef60}). In this paper, we shall write
$\mathrm{Pr}_{\thr{S}}$ instead of
 $\mathrm{Pr}_{\varphi_{\thr{S}}}$, having in mind some formula $\varphi_{\thr{S}}$ representing the axioms of $\thr{S}$ in $\IDOE$ in a natural way, by mimicking their informal definition.
We refer to 
$\mathrm{Pr}_{\thr{S}}$ as the \emph{standard provability predicate} of $\thr{S}$.  

We employ modal notation, writing $\Box_{\thr{S}}$ instead of $\mathrm{Pr}_{\thr{S}}$. We write $\Box_0$ as shorthand for 
$\Box_{\IDOE}$.
By $\Box_x$ we denote the formula containing $x$ as a free variable, and such that for $n>0$, $\Box_n$ (the result of substituting $\overline{n}$ for $x$ in $\Box_x$) is $\Box_{\ISi{n}}$. We write $\Box$, or sometimes also $\Box_{\omega}$, for $\Box_{\PA}$.
We write $\Diamond_{\thr{S}}\varphi$ as shorthand for 
$\lnot \Box_{\thr{S}} \lnot \varphi$.

We recall that $\PA$ is essentially reflexive, meaning that it proves the consistency of each of its finite subtheories, and the same holds for every consistent extension of $\PA$ in the language of arithmetic (\cite[Theorem III.2.35]{HajPud98}).

We use the dot notation as usual, thus $\Box_\thr{S}\varphi(\dot{x})$ means that the numeral for the value of $x$ has been substituted for the free variable of the formula $\varphi$ inside $\Box_{\thr{S}}$. If the intended meaning is clear from the context, we will often simply write $\Box_{\thr{S}} \varphi(x)$ instead of $\Box_\thr{S}\varphi(\dot{x})$.
We recall that any theory $\thr{S}$ extending $\IDOE$ is provably $\Sigma_1$-complete, meaning that for any $\Sigma_1$-formula $\sigma$, 
\begin{equation*}\label{idoesigma1}
\IDOE \vdash \sigma(x)\to \Box_\thr{S}\sigma(\dot{x}).
\end{equation*}
 It is well-known that if $\thr{S}$ is as above, then the 
\emph{Hilbert-Bernays-L\"ob derivability conditions} hold for $\Box_{\thr{S}}$ verifiably in $\IDOE$: \begin{enumerate}
\item if $\thr{S}\vdash \varphi$, then  $\IDOE\vdash \Box_{\thr{S}}\varphi$ \label{hbl1}
\item $\IDOE\vdash \Box_{\thr{S}}(\varphi\to \psi)\to (\Box_{\thr{S}} \varphi\to \Box_{\thr{S}}\psi)$ \label{hbl2}
\item $\IDOE\vdash \Box_{\thr{S}} \varphi \to \Box_{\thr{S}}\Box_{\thr{S}} \varphi$ \label{hbl3}
\end{enumerate}
We note that \ref{hbl2} and \ref{hbl3} also hold with internal variables ranging over $\varphi$ and $\psi$.

\begin{theorem}\label{diagonal_lemma}
Let $\varphi$ be an $\mathcal{L}$-formula whose free variables are exactly $x_0, \ldots, x_n$. Then there is an 
$\mathcal{L}$-formula $\psi$ with exactly the same free variables, and such that 
\begin{equation}
\IDOE \vdash \psi\left( x_1, \ldots, x_n\right) \leftrightarrow \varphi\left(\gnmb{\psi}, x_1, \ldots, x_n\right).
\end{equation}
\end{theorem}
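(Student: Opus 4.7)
The plan is to carry out the standard G\"odel--Carnap diagonalization inside $\IDOE$, using the machinery of Section \ref{IDOErecfun} to guarantee that the substitution function is representable and provably total.

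First I would fix a Kalmar elementary function $\recfun{sub}$ that takes a pair $(e,n)$ with $e$ the g\"odelnumber of a formula $\chi(x_0,x_1,\ldots,x_n)$ and returns $\gnmb{\chi(\overline{n},x_1,\ldots,x_n)}$, i.e.\ the g\"odelnumber of the formula obtained from $\chi$ by substituting the numeral $\overline{n}$ for the variable $x_0$. That this is a Kalmar elementary operation on g\"odelnumbers is a standard arithmetization exercise. By Section \ref{IDOErecfun}, $\recfun{sub}$ is represented in $\IDOE$ by a $\Sigma_1$-formula $\mathrm{Sub}(e,n,y)$ that is provably total and functional:
\begin{equation*}
\IDOE \vdash \forall e \forall n\, \exists! y\, \mathrm{Sub}(e,n,y),
\end{equation*}
and such that for all standard $e,n$, $\IDOE \vdash \mathrm{Sub}(\overline{e},\overline{n},\overline{\recfun{sub}(e,n)})$.

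Next I would build the fixed point. Given $\varphi(x_0,x_1,\ldots,x_n)$, set
\begin{equation*}
\theta(x_0,x_1,\ldots,x_n) \;:=\; \exists y\,\bigl(\mathrm{Sub}(x_0,x_0,y)\land\varphi(y,x_1,\ldots,x_n)\bigr),
\end{equation*}
let $e:=\gnmb{\theta}$, and define $\psi(x_1,\ldots,x_n):=\theta(\overline{e},x_1,\ldots,x_n)$. By the very definition of $\recfun{sub}$, the g\"odelnumber of $\psi$ is exactly $\recfun{sub}(e,e)$; hence $\IDOE$ proves $\mathrm{Sub}(\overline{e},\overline{e},\overline{\gnmb{\psi}})$ together with the uniqueness clause $\exists! y\,\mathrm{Sub}(\overline{e},\overline{e},y)$. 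Expanding $\psi$ and using functionality of $\mathrm{Sub}$ then yields, inside $\IDOE$,
\begin{equation*}
\psi(x_1,\ldots,x_n)\leftrightarrow \varphi(\overline{\gnmb{\psi}},x_1,\ldots,x_n),
\end{equation*}
which is the desired fixed-point equivalence.

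The only real obstacle is the metamathematical bookkeeping in the first step: verifying that the substitution of a numeral for a variable in a formula is a Kalmar elementary function of the corresponding g\"odelnumbers, and that its natural $\Sigma_1$-representation is provably total and functional in $\IDOE$. Both facts are standard (they are implicit in the fragment of arithmetization of syntax already assumed in Section \ref{meta_IDOE}, used for instance when constructing $\mathrm{Pr}_{\thr{S}}$), so I would simply cite them rather than redo the construction; with them in hand, the diagonalization itself is a few lines.
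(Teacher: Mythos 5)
Your proof is correct and is precisely the standard G\"odel--Carnap diagonalization that the paper itself relies on: the paper states Theorem \ref{diagonal_lemma} without proof and merely alludes to ``the proof'' in the remark that follows, and your argument (an $\IDOE$-provably total and functional $\Sigma_1$ representation of the numeral-substitution function, followed by the self-application $\theta(\overline{\gnmb{\theta}},x_1,\ldots,x_n)$) is exactly that standard argument. The only point worth adding concerns the follow-up remark on complexity preservation, not the statement itself: for $\varphi\in\Pi_n$ one should take the $\IDOE$-provably equivalent universal form $\forall y\,(\mathrm{Sub}(x_0,x_0,y)\to\varphi(y,x_1,\ldots,x_n))$ of your fixed point to keep $\psi$ in $\Pi_n$, since your existential form only directly yields the $\Sigma_n$ case.
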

From the proof of Theorem \ref{diagonal_lemma} it is clear that if $\varphi$ is $\Sigma_n$($\Pi_n$), then so is $\psi$.

Verifiability of L\"ob's principle for $\Box_{\thr{S}}$ in $\IDOE$ follows from the Hilbert-Bernays-L\"ob derivability conditions  for $\Box_{\thr{S}}$,
together with Theorem \ref{diagonal_lemma} (\cite[Theorem 3.2]{Boo93}).
This means that
\begin{equation}
\IDOE\vdash \Box_{\thr{S}}(\Box_{\thr{S}} \varphi\to \varphi)\to \Box_{\thr{S}} \varphi, 
\end{equation}
and thus 
modal principles valid in the G\"odel-L\"ob provability logic $\GL$ can be used when reasoning about $\thr{S}$ in $\IDOE$.

For $n\geq1$, there is a partial truth definition $\fofl{True}_{\Pi_n}(x)$ ($\fofl{True}_{\Sigma_n}(x)$) in $\IDOE$  for the class $\Pi_n$ ($\Sigma_n$) \cite[V.5(b)]{HajPud98}.  Thus for every $\varphi\in \Pi_n$ ($\varphi\in \Sigma_n$) we have $$\IDOE\vdash \varphi \mathrel{\leftrightarrow} \fofl{True}_{\Pi_n}(\varphi)\;\;\; (\IDOE\vdash \varphi \mathrel{\leftrightarrow} \fofl{True}_{\Sigma_n}(\varphi)).$$
Moreover,  $\fofl{True}_{\Pi_n}(x)$ and $\fofl{True}_{\Sigma_n}(x)$ satisfy Tarski's conditions (see \cite[Definition~I.1.74]{HajPud98}). For all $n\geq 1$, $\fofl{True}_{\Pi_n}(x)$ is a $\Pi_n$-formula, and 
 $\fofl{True}_{\Sigma_n}(x)$ is a $\Sigma_n$-formula. 

 Suppose $\alpha\in [1, \omega]$ and $n\ge 1$.  By $\BoxI[n]_{\alpha}$
  we denote the provability predicate for the theory $\ISi{\alpha}$ extended by all true $\Pi_n$ sentences.   We formalize $\BoxI[n]_{\alpha}\varphi$ in a natural way using a partial truth definition: $$\exists \psi \in \Pi_n\text{-Sen} ( \fofl{True}_{\Pi_n}(\psi) \land \Box_{\alpha}(\psi\to \varphi)),$$ where $\Pi_n\text{-Sen}$ denotes the set of all Gödel numbers of  $\Pi_n$-sentences. 
Here we can have quantifiers over $\alpha$ but not over $n$ in the language of arithmetic. We use $\DiamondI[n]_{\alpha}$ to denote the dual of $\BoxI[n]_{\alpha}$, i.e. 
$\DiamondI[n]_{\alpha}\varphi :=\lnot \BoxI[n]_{\alpha}\lnot\varphi$.  The sentence $\DiamondI[n]_{\alpha}\varphi$ is equivalent to uniform $\Pi_{n+1}$-reflection for $\ISi{\alpha}+\varphi$, i.e.\ the principle saying that if for some $\Pi_{n+1}$-formula $\psi(x)$ and every $m$ the theory $\ISi{\alpha}+\varphi$ proves $\psi(\overline{m})$, then $\forall x\, \psi(x)$ is true.


\section{Ordinals and the fast-growing hierarchy} \label{Ordinal_Section}
We introduce a certain fast-growing hierarchy of recursive functions indexed by ordinals below $\varepsilon_0$. We recall the basic facts concerning this hierarchy, including a characterization of the provably recursive functions of 
$\ISi{n}$, for \mbox{$n>0$}. 

In order to define the fast-growing functions, and to talk about them in our arithmetical theories, we need to represent ordinals below 
$\varepsilon_0$ as natural numbers. 
For that, it is useful to recall the Cantor normal form theorem: 
\begin{theorem}
For every ordinal $\alpha>0$, there exist unique $\alpha_0 \geq \alpha_1 \geq \ldots \geq \alpha_{k}$ with 
\begin{equation*}
\alpha= \omega^{\alpha_0} + \omega^{\alpha_1} + \ldots + \omega^{\alpha_k}.
\end{equation*}
\end{theorem}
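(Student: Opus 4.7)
The plan is to prove existence by transfinite induction on $\alpha$ and uniqueness from the characterization of the leading exponent as the unique $\gamma$ with $\omega^{\gamma} \le \alpha < \omega^{\gamma+1}$. Throughout, I will rely on two standard facts of ordinal arithmetic: left subtraction (for any $\gamma \le \alpha$ there is a unique $\beta$ with $\alpha = \gamma + \beta$) and the absorption identity $\omega^{\gamma} + \omega^{\delta} = \omega^{\delta}$ whenever $\gamma < \delta$.

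For existence, I fix $\alpha > 0$ and assume the claim for every positive $\alpha' < \alpha$. The set $S = \{\gamma : \omega^{\gamma} \le \alpha\}$ is nonempty (it contains $0$), downward closed, and bounded above (using the easy transfinite induction $\omega^{\gamma} \ge \gamma$); by continuity of $\gamma \mapsto \omega^{\gamma}$ its supremum $\alpha_0$ itself lies in $S$, and is thus the largest element of $S$. Left subtraction gives $\alpha = \omega^{\alpha_0} + \beta$ with $\beta < \alpha$. If $\beta = 0$ I take $k = 0$; otherwise the inductive hypothesis applied to $\beta$ yields $\beta = \omega^{\alpha_1} + \cdots + \omega^{\alpha_k}$ with $\alpha_1 \ge \cdots \ge \alpha_k$, and since $\omega^{\alpha_1} \le \beta \le \alpha$, the maximality of $\alpha_0$ in $S$ forces $\alpha_1 \le \alpha_0$, as needed.

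For uniqueness, I claim that whenever $\alpha$ is written in the stated form one has $\omega^{\alpha_0} \le \alpha < \omega^{\alpha_0+1}$. The lower bound is immediate from the decomposition, while for the upper bound the absorption identity shows that a sum of $k+1$ terms each at most $\omega^{\alpha_0}$ is bounded by $\omega^{\alpha_0} \cdot (k+1) < \omega^{\alpha_0} \cdot \omega = \omega^{\alpha_0+1}$. Hence $\alpha_0$ is uniquely determined by $\alpha$, and by left subtraction the same holds for the remainder $\beta = \omega^{\alpha_1} + \cdots + \omega^{\alpha_k}$; an induction on $k$ (or on $\alpha$) then finishes the argument. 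I do not anticipate a genuine obstacle: modulo the standard ordinal-arithmetic lemmas just invoked the proof is a short transfinite induction, and the only substantive point requiring care is the bound $\omega^{\alpha_0} \cdot (k+1) < \omega^{\alpha_0+1}$ that prevents finitely many copies of $\omega^{\alpha_0}$ from escaping the next power.
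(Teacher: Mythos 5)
The paper does not prove this statement: it is the classical Cantor normal form theorem, recalled as background in Section~3 without proof. Your argument is the standard one (existence via the maximal exponent $\alpha_0$ with $\omega^{\alpha_0}\le\alpha$, obtained from normality of $\gamma\mapsto\omega^{\gamma}$, plus uniqueness from $\omega^{\alpha_0}\le\alpha<\omega^{\alpha_0+1}$ and left subtraction) and it is correct. The only step that deserves an explicit word is the strict inequality $\beta<\alpha$ for the remainder: since left addition of an infinite ordinal can absorb, this is not automatic from $\alpha=\omega^{\alpha_0}+\beta$ alone, but it does follow from the maximality of $\alpha_0$ (if $\beta=\alpha$ then $\alpha=\omega^{\alpha_0}\cdot n+\alpha$ for every $n$, whence $\alpha\ge\omega^{\alpha_0+1}$, a contradiction); also, the bound $\omega^{\alpha_1}+\cdots+\omega^{\alpha_k}\le\omega^{\alpha_0}\cdot(k+1)$ uses monotonicity of addition rather than absorption, but that is only a mislabel, not a gap.
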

The above representation of $\alpha$ is called its \emph{Cantor normal form}. 
Since $\varepsilon_0$ is the least ordinal $\varepsilon$ for which it holds that $\varepsilon=\omega^\varepsilon$, we see that if $\alpha<\varepsilon_0$, then $\alpha$ has a Cantor normal form with exponents $\alpha_i<\alpha$, and these exponents in turn have Cantor normal form with yet smaller exponents. We represent an ordinal $\alpha$ below $\varepsilon_0$ by either the symbol $0$ if $\alpha=0$, or otherwise its Cantor normal form $$ \omega^{\alpha_0} + \omega^{\alpha_1} + \ldots + \omega^{\alpha_k},$$ where each $\alpha_i$ is represented in the same way. More formally, this means that for any ordinal below $\varepsilon_0$, we fix a term built  $\omega^x$, $x+y$, and $0$. This method, known as Cantor ordinal notation system, is the most common way of representing ordinals below $\varepsilon_0$.

In order to work with the above terms in arithmetic, we represent them as their Gödel numbers. We note that the predicate $<$ and the standard functions of ordinal arithmetic ($x+y$, $x\cdot y$ and $\omega^x$) on Cantor ordinal notations can be expressed in  the language of arithmetic. Basic facts about ordinal arithmetic can be easily proven in $\IDOE$ (\cite[Section 3]{Som95}); we will omit the details of this formalization in our proofs. 

\subsection{The fast-growing hierarchy}

For an ordinal number $\alpha$ and $n<\omega$, we define $\omega^{\alpha}_{n}$ by 
$\omega^\alpha_0:=\alpha$, and $\omega^\alpha_{n+1}=\omega^{\omega^{\alpha}_n}$. 
We write $\omega_n$ 
for $\omega^1_n$.
Thus $\omega_0=1$, $\omega_{1}=\omega$, $\omega_{2}=\omega^{\omega}$, etc. It is well-known that the ordinal $\varepsilon_0$ can also be characterized as $\sup\{\omega_n\mid n\in\omega\}$; we therefore define $\omega_{\omega}:= \varepsilon_0$.

A \emph{fundamental sequence} for a countable limit ordinal $\lambda$ is a strictly monotone sequence $\{\lambda[n]\} _{n\in\omega}$ converging to $\lambda$ from below, i.e.\ $\lambda[n]<\lambda[n+1]<\lambda$ for all $n<\omega$, 
and $\sup\{\lambda[n]\mid n\in\omega\}=\lambda$. We consider the standard assignment of fundamental sequences to limit ordinals below $\varepsilon_0$.

\begin{definition}
Let $\varepsilon_0[n]:= \omega_{n+1}$.
For a limit ordinal $\lambda<\varepsilon_0$ with Cantor normal form 
$\lambda = \omega^{\alpha_0} + \omega^{\alpha_1} + \ldots + \omega^{\alpha_k}$,
we define $\lambda[n]$ as follows:
\begin{itemize}
\item If $\alpha_k$ is a successor ordinal, let $\lambda[n]:=\omega^{\alpha_0} + \omega^{\alpha_1} + \ldots + \omega^{(\alpha_{k}-1)}\cdot (n+1)$
\item if $\alpha_k$ is a limit ordinal, let $\lambda[n]:=\omega^{\alpha_0} + \omega^{\alpha_1} + \ldots + \omega^{\alpha_{k}[n]}$
\end{itemize}
\end{definition}
Given a function $\fast: \mathbb{N}\to \mathbb{N}$, we use exponential notation to denote repeated compositions of $\fast$, thus $\fast^0(x)=x$, and $\fast^{n+1}(x)=\fast(\fast^n(x))$. 
\begin{definition}\label{fasthier}
The fast-growing hierarchy $\{\fast_\alpha\}_{\alpha\leq \varepsilon_0}$ of recursive functions is given by: 
\begin{align*}
\fast_0(n)&=n+1\\
\fast_{\alpha+1}(n)&=\fast^{n+1}_\alpha(n) \\
\fast_{\lambda}(n)&= \fast_{\lambda[n]}(n)
\end{align*}
\end{definition}
This exact version of the fast-growing hierarchy was first introduced by Solovay and Ketonen in \cite{KetSol81}. Their results, together with results of Paris in \cite{Par80}, 
imply the following classification of the provably recursive functions of $\PA$: 

\begin{theorem}\label{prtot} For $n>0$, 
$\ISi{n}\vdash \conv{\fast_\alpha}  \;\; \Leftrightarrow\;\; \alpha<\omega_n$.
\end{theorem}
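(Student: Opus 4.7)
The statement splits into two inclusions, corresponding to the ``lower bound'' (provability) and ``upper bound'' (unprovability) halves of the ordinal analysis of $\ISi{n}$.

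For the direction $\alpha<\omega_n \Rightarrow \ISi{n}\vdash \conv{\fast_\alpha}$, the plan is transfinite induction on $\alpha$ applied to the $\Pi_2$-formula $\varphi(\alpha)\equiv \conv{\fast_\alpha}$. The three clauses of Definition \ref{fasthier} handle the three cases: the base $\fast_0(n)=n+1$ is immediate in $\IDOE$; the successor clause $\fast_{\alpha+1}(n)=\fast_\alpha^{n+1}(n)$ reduces to a $\Sigma_1$-induction on $n$ once $\conv{\fast_\alpha}$ is available; and the limit clause $\fast_\lambda(n)=\fast_{\lambda[n]}(n)$ is settled by invoking the inductive hypothesis at $\lambda[n]<\lambda$ for each $n$. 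The engine tying this together is the classical fact that $\ISi{n}$ proves the scheme of $\Pi_2$-transfinite induction up to every ordinal $\beta<\omega_n$, a standard ingredient of the ordinal-theoretic analysis of $\ISi{n}$.

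For the converse $\ISi{n}\vdash \conv{\fast_\alpha}\Rightarrow \alpha<\omega_n$, I would argue contrapositively. Using that $\fast$ is monotone in its ordinal index (a fact provable in $\IDOE$ once Cantor notation has been set up as in Section \ref{Ordinal_Section}), it suffices to show $\ISi{n}\not\vdash \conv{\fast_{\omega_n}}$. Here the plan is to invoke the Solovay--Ketonen and Paris analysis \cite{KetSol81,Par80}: every provably recursive function of $\ISi{n}$ is dominated by some $\fast_\beta$ with $\beta<\omega_n$, while $\fast_{\omega_n}$ eventually majorises each such $\fast_\beta$ by virtue of its fundamental sequence $\omega_n[k]$ running cofinally through ordinals below $\omega_n$. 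If $\conv{\fast_{\omega_n}}$ were provable in $\ISi{n}$, $\fast_{\omega_n}$ itself would be a provably total function of $\ISi{n}$ outgrowing every such function, contradiction.

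The hard part of any self-contained proof would be establishing the two classical inputs: the $\Pi_2$-transfinite induction scheme in $\ISi{n}$ on the provability side, and the Ketonen--Solovay majorisation estimates on the unprovability side. Both hinge on delicate combinatorics of the fundamental-sequence assignment and on an elementary formalisation of ordinal arithmetic in Cantor notation. In the present paper, where Theorem \ref{prtot} serves only as a background fact, these inputs are used as black boxes and no new proof is required.
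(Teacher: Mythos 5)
The paper gives no proof of Theorem \ref{prtot}: it is stated as a direct consequence of the results of Ketonen--Solovay \cite{KetSol81} and Paris \cite{Par80}, exactly the black boxes you invoke. Your outline correctly reproduces the standard argument behind that citation --- $\Pi_2$-transfinite induction below $\omega_n$ in $\ISi{n}$ for the provability half (the same machinery the paper later imports from \cite{Som95} in Lemma \ref{Box_n_prov_conv} and Theorem \ref{TI_Pi_0^2_analysis}), and the Ketonen--Solovay domination analysis for the unprovability half --- so it is consistent with, and no less detailed than, the paper's own treatment.
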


The computation of $\fast_\alpha(n)$ is closely connected to the following stepdown relation on ordinals.

\begin{definition} For any ordinals $\alpha,\beta\le \varepsilon_0$ and numbers $n,r$ we write $\alpha\stepdown[r]{n}\beta$ if there exists a sequence $\gamma_0,\ldots,\gamma_r$ such that $\gamma_0=\alpha$, $\gamma_r=\beta$, and for all $0\le i<r$, $\gamma_{i+1}=\gamma_{i}[n]$ if $\gamma_i$ is a limit ordinal and $\gamma_{i+1}+1=\gamma_i$, otherwise. We write $\alpha \stepdown{n}\beta$ in case 
$\alpha\stepdown[r]{n}\beta$ for some $r$.
\end{definition}

\begin{lemma}[$\IDOE$]\cite[Lemma~2.3, Lemma~2.4]{FriRatWei13}\label{fasthier_IDOE_facts}

\begin{enumerate}
\item If $\alpha\stepdown{n} \beta$ and $\conv{\fast_{\alpha}(n)}$ then $\conv{\fast_{\beta}(n)}$ and $\fast_{\alpha}(n)\ge\fast_{\beta}(n)$. 
\item If $\conv{\fast_{\alpha}(n)}$ and $n>m$, then $\conv{\fast_{\alpha}(m)}$ and $\fast_{\alpha}(n)\ge \fast_{\alpha}(m)$.
\item If $\alpha>\beta$ and $\conv{\fast_{\alpha}}$ then $\conv{\fast_{\beta}}$.
\item If $i>0$ and $\conv{\fast^i_{\alpha}(n)}$ then $n<\fast_{\alpha}^i(n)$. 
\item If $\conv{\fast_{\alpha}(n)}$ then $\alpha\stepdown{n} 0$.
\end{enumerate}
\end{lemma}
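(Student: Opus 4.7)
The plan is to prove the five parts in the order (1), (4), (2), (3), (5), since the later parts build on the earlier ones. The common strategy is induction on the Cantor normal form of the ordinal notation together with induction on the length of the witnessing stepdown sequence or iteration index. To fit inside $\IDOE$, each such induction will be phrased as an induction on a finite parameter (a computation trace, a stepdown sequence, or the length of the Cantor normal form), so that the inductive invariant can be taken to be $\Delta_0$ in that parameter.

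For (1), I would induct on the length $r$ of the stepdown sequence; the case $r=0$ is trivial, and the case $r=1$ splits into the two single-step subcases given by Definition \ref{fasthier}. If $\alpha=\beta+1$, then $\fast_\alpha(n)=\fast_\beta^{n+1}(n)$, so convergence of $\fast_\alpha(n)$ forces $\fast_\beta(n)$ to converge, and the required inequality $\fast_\beta(n)\le\fast_\alpha(n)$ follows from part (4) applied to the remaining iterations. If $\alpha$ is a limit and $\beta=\alpha[n]$, then $\fast_\alpha(n)=\fast_\beta(n)$ by definition. To avoid circularity, I would prove (4) first, by induction on $i$, with the base case $i=1$ (namely $\fast_\alpha(n)>n$) established by a structural induction on $\alpha$: $\fast_0(n)=n+1>n$; the successor step uses $\fast_{\alpha+1}(n)=\fast_\alpha^{n+1}(n)$ with $n+1\ge 1$; the limit step reduces to $\fast_{\lambda[n]}(n)$.

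For (2), I would induct structurally on $\alpha$. The successor case uses the IH applied to $\fast_\beta$ together with monotonicity in the iteration index to compare $\fast_\beta^{n+1}(n)$ with $\fast_\beta^{m+1}(m)$. The limit case is where the standard fundamental sequence assignment is used: for $m<n$ one has $\lambda[n]\stepdown{n}\lambda[m]$, so (1) converts $\conv{\fast_{\lambda[n]}(n)}$ into $\conv{\fast_{\lambda[m]}(n)}$, after which the IH at $\lambda[m]$ yields $\conv{\fast_{\lambda[m]}(m)}$, i.e., $\conv{\fast_\lambda(m)}$. Part (3) then follows easily from (1) and (2): given $\alpha>\beta$ and $n$, pick $m\ge n$ large enough that $\alpha\stepdown{m}\beta$ (such $m$ exists because each limit below $\varepsilon_0$ has a cofinal fundamental sequence, so iterated stepdown at a sufficiently large index reaches any smaller ordinal), apply (1) to get $\conv{\fast_\beta(m)}$, and then (2) to descend from $m$ to $n$. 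Finally, (5) is read off from the computation trace itself: the sequence of ordinals encountered in the recursive calls of $\fast_\alpha(n)$ is precisely a stepdown sequence terminating at $0$, so a terminating trace directly witnesses $\alpha\stepdown{n}0$.

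The main obstacle is the $\IDOE$-formalization: the predicate $\conv{\fast_\alpha(n)}$ is $\Sigma_1$, so naive structural induction on $\alpha$ with such statements is not available. The remedy is to reformulate each induction as one on a finite witness — a computation trace or a stepdown sequence — so the assertions about bounds and monotonicity become bounded statements about the nodes of a given finite object, provable by $\Delta_0$-induction with that object as a parameter. A second, more routine point is verifying the syntactic facts about the fundamental sequence assignment (such as $\lambda[n]\stepdown{n}\lambda[m]$ for $m<n$) inside $\IDOE$; these reduce to straightforward case analyses on the Cantor normal form of $\lambda$.
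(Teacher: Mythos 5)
First, a point of comparison: the paper does not prove this lemma at all --- it is imported from \cite[Lemma 2.3, Lemma 2.4]{FriRatWei13}, so there is no in-paper argument to measure your proposal against. What you have written is essentially the standard proof one would give for that source's result: establish $\fast_\alpha^i(n)>n$ first, derive the stepdown monotonicity (1) by walking along the witnessing sequence, handle monotonicity in the argument (2) via the built-up property $\lambda[n]\stepdown{n}\lambda[m]$ of the standard fundamental sequences (this is \cite[Proposition~2.12]{FriRatWei13}, which the paper itself invokes in the proof of Lemma \ref{stepdown_aux4}), and carry everything out as $\Delta_0$-induction on computation traces or stepdown sequences rather than as transfinite induction on notations. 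Identifying that last point as the main obstacle, and the trace-based induction as the remedy, is exactly right.

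Two places need more care. In (5), the full collection of ordinals occurring in the recursive calls of $\fast_\alpha(n)$ is \emph{not} a $\stepdown{n}$-sequence: in the successor clause $\fast_{\gamma+1}(n)=\fast_\gamma^{n+1}(n)$ the outer iterations are evaluated at arguments larger than $n$, so their sub-calls step down at those larger indices. What does work is to follow only the innermost branch (the first recursive call at each level), whose argument stays equal to $n$; that spine is a genuine $\stepdown{n}$-chain from $\alpha$ to $0$ and is extractable from, and bounded by, the trace. In (3), the clause ``pick $m\ge n$ large enough that $\alpha\stepdown{m}\beta$; such $m$ exists because\dots'' cannot be taken as a free-standing fact in $\IDOE$: the witnessing sequence from $\alpha$ down to $\beta$ can have length a tower of exponentials in $m$ (already for $\alpha=\omega_k$), so its existence is a $\Sigma_1$ statement with no elementary bound and is not provable in $\IDOE$ uniformly in the notations. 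You must obtain the chain from the hypothesis $\conv{\fast_\alpha}$ --- i.e., first prove (5) (or argue directly on the trace of $\fast_\alpha(m)$) and then check syntactically that for $m$ exceeding the coefficients of $\beta$ the chain $\alpha\stepdown{m}0$ passes through $\beta$. This also forces (5) to precede (3) in your ordering. With these two repairs the argument goes through.
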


\begin{lemma}[$\PA$] \label{stepdown_exp}$\mbox{\cite[Lemma~2.10]{FriRatWei13}}$  Suppose $\alpha,\beta< \varepsilon_0$, $n$ is a number, $\mbox{$\omega^{\alpha}\stepdown{n}0$}$, and $\alpha\stepdown{n}\beta$. Then $\omega^{\alpha}\stepdown{n}\omega^{\beta}$. 
\end{lemma}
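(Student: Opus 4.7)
My plan is to induct on the length $r$ of the stepdown $\alpha \stepdown[r]{n} \beta$. The base $r = 0$ (where $\alpha = \beta$) is immediate. For the inductive step, I write the first step as $\alpha \stepdown[1]{n} \alpha' \stepdown[r-1]{n} \beta$; it will suffice to establish the single-stepdown claim $\omega^\alpha \stepdown{n} \omega^{\alpha'}$. Indeed, the stepdown sequence from any ordinal is deterministic, so combining $\omega^\alpha \stepdown{n} 0$ with $\omega^\alpha \stepdown{n} \omega^{\alpha'}$ yields $\omega^{\alpha'} \stepdown{n} 0$, and the induction hypothesis then applies to $\alpha' \stepdown{n} \beta$ to produce $\omega^{\alpha'} \stepdown{n} \omega^\beta$, which concatenates with $\omega^\alpha \stepdown{n} \omega^{\alpha'}$.

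The single-step claim splits on the shape of $\alpha$. If $\alpha$ is a limit, then $\alpha' = \alpha[n]$, and the fundamental sequence for the limit $\omega^\alpha$ gives $\omega^\alpha \stepdown[1]{n} \omega^{\alpha[n]} = \omega^{\alpha'}$ in one step. If $\alpha = \alpha' + 1$ is a successor, then $\omega^\alpha$ is a limit with $\omega^\alpha[n] = \omega^{\alpha'} \cdot (n+1)$, so the task becomes $\omega^{\alpha'} \cdot (n+1) \stepdown{n} \omega^{\alpha'}$. Iterating $n$ times, this reduces to the sub-claim: whenever $\omega^\gamma \cdot (k+1) \stepdown{n} 0$, we have $\omega^\gamma \cdot (k+1) \stepdown{n} \omega^\gamma \cdot k$. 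At each iteration the hypothesis $\omega^{\alpha'} \cdot (j+1) \stepdown{n} 0$ needed to invoke the sub-claim follows from the previous stage together with determinism of the stepdown sequence.

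The sub-claim is the technical heart of the proof and the step I expect to be the main obstacle. For $\gamma = 0$ it is trivial, since $k+1$ steps directly to $k$. For $\gamma > 0$, $\omega^\gamma \cdot (k+1) = \omega^\gamma \cdot k + \omega^\gamma$ is a limit whose fundamental sequence yields $\omega^\gamma \cdot (k+1) \stepdown[1]{n} \omega^\gamma \cdot k + \omega^\gamma[n]$. I will show that every ordinal appearing subsequently in the stepdown sequence, until $\omega^\gamma \cdot k$ is reached, has the form $\omega^\gamma \cdot k + \delta$ for some $\delta$ arising from stepping down $\omega^\gamma[n]$. Since the Cantor normal form of such a $\delta < \omega^\gamma$ uses only exponents strictly below $\gamma$, the sum $\omega^\gamma \cdot k + \delta$ admits a Cantor normal form in which the $\omega^\gamma \cdot k$ prefix is undisturbed, so one more stepdown modifies only the $\delta$ tail. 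This plateau persists as long as $\delta > 0$, and the only way the sequence can escape it is for $\delta$ to reach $0$, exposing $\omega^\gamma \cdot k$ itself. Since the full sequence reaches $0$ by hypothesis, it must in particular pass through $\omega^\gamma \cdot k$, yielding the sub-claim.

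Formalization in $\PA$ is unproblematic: the key invariant "current ordinal has the form $\omega^\gamma \cdot k + \delta$ with $\delta < \omega^\gamma$" is expressible with the Cantor-normal-form predicate discussed in Section~\ref{Ordinal_Section}, and its preservation under a single stepdown is a $\Delta_0$ computation on the normal-form data. Induction along the length of the stepdown sequence (a standard number, once bounded by the hypothesis that $0$ is reached) then completes the argument.
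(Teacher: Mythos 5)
Your argument is correct. Note first that the paper itself gives no proof of this lemma --- it is imported wholesale from \cite[Lemma~2.10]{FriRatWei13} --- so there is nothing internal to compare against; but your proof is a faithful reconstruction of the standard argument. The reduction to a single step $\alpha\stepdown[1]{n}\alpha'$ via determinism of the stepdown sequence is sound (from each ordinal the next element is uniquely determined, and the sequence can only terminate at $0$, so any two $\stepdown{n}$-reachable ordinals lie on one maximal path). The two cases for the single step are handled exactly as the fundamental-sequence definitions dictate: $\omega^{\lambda}[n]=\omega^{\lambda[n]}$ for limit exponents, and $\omega^{\gamma+1}[n]=\omega^{\gamma}\cdot(n+1)$ for successor exponents. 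The technical core --- that a stepdown of $\omega^{\gamma}\cdot k+\delta$ with $0<\delta<\omega^{\gamma}$ acts only on the tail $\delta$ of the Cantor normal form, so the sequence from $\omega^{\gamma}\cdot(k+1)$ must pass through $\omega^{\gamma}\cdot k$ before it can reach $0$ --- is precisely the right invariant, and your justification (the fundamental-sequence and predecessor operations touch only the last CNF term, which belongs to $\delta$) is complete.

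One small quibble on the formalization remark: the length of the stepdown sequence is an internal number of the model, not a ``standard number,'' and it need not be externally finite. This does no harm --- the lemma is stated over $\PA$, which has induction for all arithmetical formulas, so induction on that internal length (and on the position $r$ in the outer induction) goes through regardless --- but the phrase suggests a distinction that plays no role here and should be dropped.
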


\begin{lemma}[$\PA$] \label{stepdown_aux1}  For all numbers $k$, $n$, and $s$ if $\omega_n^{k+1}\stepdown{s} 0$ then $\omega_n^{k+1}\stepdown{s} \omega_n^k$.  
\end{lemma}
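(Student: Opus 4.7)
The plan is to induct on $n$. The base case $n=0$ is immediate: $\omega^{k+1}_0 = k+1 \stepdown[1]{s} k = \omega^k_0$ for any $s$, so the conclusion holds regardless of the hypothesis.

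For the inductive step, assume the claim for $n$ and suppose $\omega^{\omega^{k+1}_n} \stepdown{s} 0$. By Lemma~\ref{stepdown_exp} applied with $\alpha = \omega^{k+1}_n$ and $\beta = \omega^k_n$, in order to conclude $\omega^{\omega^{k+1}_n} \stepdown{s} \omega^{\omega^k_n}$ it suffices to verify $\omega^{k+1}_n \stepdown{s} \omega^k_n$. The inductive hypothesis reduces this in turn to showing $\omega^{k+1}_n \stepdown{s} 0$.

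The remaining task is therefore to extract $\omega^{k+1}_n \stepdown{s} 0$ from the hypothesis $\omega^{\omega^{k+1}_n} \stepdown{s} 0$. I would handle this via an auxiliary claim, applied with $\gamma = \omega^{k+1}_n$: for every $\gamma < \varepsilon_0$ and every $s$, if $\omega^\gamma \stepdown{s} 0$ then $\gamma \stepdown{s} 0$. The claim is provable in $\PA$ by induction on the length of the stepdown sequence from $\omega^\gamma$, with cases according to whether $\gamma$ is zero, a successor, or a limit. The limit case goes through directly using $\omega^\gamma \stepdown[1]{s} \omega^{\gamma[s]}$ and the inner inductive hypothesis applied to $\gamma[s]$. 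The successor case $\gamma = \delta + 1$ is the main obstacle: the single step $\omega^{\delta+1} \stepdown[1]{s} \omega^\delta \cdot (s+1)$ does not land at $\omega^\delta$ directly, so one must insert a secondary induction on the coefficient $j \geq 1$ to show that $\omega^\delta \cdot j \stepdown{s} 0$ implies $\omega^\delta \stepdown{s} 0$, by tracking how the leading coefficient decrements through intermediate ordinals of the form $\omega^\delta \cdot (j-1) + (\text{lower-order terms})$ before the sequence visits $\omega^\delta$ exactly. Once the auxiliary claim is in place, the main induction on $n$ closes cleanly via Lemma~\ref{stepdown_exp}.
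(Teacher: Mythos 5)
The paper states Lemma~\ref{stepdown_aux1} without proof, so there is no official argument to compare against; judged on its own terms, your proof is correct and complete, and it matches the strategy the authors evidently had in mind for this cluster of lemmas (compare the proof of Lemma~\ref{stepdown_aux3}, which likewise runs an induction on $n$ powered by Lemma~\ref{stepdown_exp}). The base case and the reduction of the inductive step to ``$\omega^{k+1}_n\stepdown{s}0$'' via Lemma~\ref{stepdown_exp} are exactly right, and you correctly identify that the only real content is the auxiliary claim that $\omega^{\gamma}\stepdown{s}0$ implies $\gamma\stepdown{s}0$. Two points deserve emphasis, both of which you handle correctly but tersely. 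First, since $n$ is quantified inside $\PA$, the outer induction must be internal; the statement being inducted on is arithmetical (the stepdown relation is $\Sigma_1$), so $\PA$'s induction schema suffices. Second, your choice to prove the auxiliary claim by induction on the \emph{length} of the witnessing descent, rather than by transfinite induction on $\gamma$ up to $\varepsilon_0$, is the right move: it keeps the induction over a natural-number parameter and sidesteps any worry about uniform transfinite induction. In the successor case $\gamma=\delta+1$, the step $\omega^{\delta+1}\stepdown[1]{s}\omega^{\delta}\cdot(s+1)$ is indeed the crux, and the mechanism you invoke --- that the one-step operation only rewrites the last term of the Cantor normal form, so the prefix $\omega^{\delta}\cdot(j-1)$ is preserved while the tail strictly decreases until the path lands on $\omega^{\delta}\cdot(j-1)$ exactly --- is sound and yields that $\omega^{\delta}$ lies on the (finite, deterministic) path to $0$ with a strictly shorter remaining descent, so the length-based inductive hypothesis applies to $\delta$. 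No gap.
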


\begin{lemma}[$\PA$] \label{stepdown_aux2} For all numbers $k$, $n$, and $s\ge 1$ if $\omega_n^{k+1}\stepdown{s} 0$ then $\mbox{$\omega_n^{k+1}\stepdown{s} \omega_n^k+1$}$.  
\end{lemma}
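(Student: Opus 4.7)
We proceed by induction on $n$. The base case $n=0$ is trivial since $\omega_0^{k+1}=k+1=\omega_0^k+1$ and the required chain has length zero.

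For the inductive step $n\ge 1$, the strategy is to decompose the descent into three successive segments:
\begin{equation*}
\omega_n^{k+1}\,=\,\omega^{\omega_{n-1}^{k+1}} \;\stepdown{s}\; \omega^{\omega_{n-1}^k+1} \;\stepdown{s}\; \omega_n^k\cdot(s+1) \;\stepdown{s}\; \omega_n^k+1.
\end{equation*}
The first segment is obtained by invoking the inductive hypothesis $\omega_{n-1}^{k+1}\stepdown{s}\omega_{n-1}^k+1$ and lifting it to the exponential via Lemma~\ref{stepdown_exp}. The second segment is a single application of the fundamental sequence: since $\omega_{n-1}^k+1$ is a successor, $\omega^{\omega_{n-1}^k+1}[s]=\omega^{\omega_{n-1}^k}\cdot(s+1)=\omega_n^k\cdot(s+1)$.

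The third segment follows from an auxiliary claim: \emph{for every ordinal $\alpha$, every $s\ge 1$, and every $m\ge 2$, $\omega^\alpha\cdot m\stepdown{s}\omega^\alpha+1$.} The key ingredient is the observation that the $[s]$-operation on a Cantor-normal-form ordinal acts only on the rightmost summand: so if $\beta+\omega^\alpha$ is already in CNF (i.e.\ the smallest exponent appearing in $\beta$ is $\ge\alpha$), then the chain from $\beta+\omega^\alpha$ leaves the prefix $\beta$ untouched while the suffix $\omega^\alpha$ descends via the chain of a single $\omega^\alpha$ down to $1$, yielding $\beta+\omega^\alpha\stepdown{s}\beta+1$. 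Applied to $\omega^\alpha\cdot m=\omega^\alpha\cdot(m-1)+\omega^\alpha$ and followed by a subtract-$1$ step from $\omega^\alpha\cdot(m-1)+1$ to $\omega^\alpha\cdot(m-1)$, a secondary induction on $m\ge 2$ yields the auxiliary claim. Instantiating with $m=s+1\ge 2$ (afforded by $s\ge 1$) produces the third segment.

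\textbf{Main obstacle.} The technical heart is verifying the ``prefix'' property above, which is a routine case analysis on the definition of $\lambda[n]$ for a limit $\lambda$ whose CNF has more than one summand. The hypothesis $s\ge 1$ is used precisely in the third segment: for $s=0$ one has $\omega_n^k\cdot(s+1)=\omega_n^k$, leaving no room for the chain to pass through $\omega_n^k+1$, and the conclusion indeed fails (e.g.\ the chain from $\omega$ with parameter $0$ is $\omega\to 1\to 0$, missing $2=\omega_1^0+1$). A minor side condition $\omega_{n-1}^{k+1}\stepdown{s}0$ needed to apply the inductive hypothesis follows from the main hypothesis $\omega_n^{k+1}\stepdown{s}0$ via Lemma~\ref{fasthier_IDOE_facts}.
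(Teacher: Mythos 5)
Your decomposition $\omega_n^{k+1}=\omega^{\omega_{n-1}^{k+1}}\stepdown{s}\omega^{\omega_{n-1}^k+1}\stepdown{s}\omega_n^k\cdot(s+1)\stepdown{s}\omega_n^k+1$ is a genuinely different route from the paper's, and its second and third segments are fine: the computation $\omega^{\omega_{n-1}^k+1}[s]=\omega_n^k\cdot(s+1)$ is correct, the ``prefix'' property of the canonical descent is the standard (and provable) observation, and you correctly locate where $s\ge 1$ is needed. The problem is the first segment. To invoke the inductive hypothesis at level $n-1$ you must supply its hypothesis $\omega_{n-1}^{k+1}\stepdown{s}0$, and you dismiss this as following from Lemma \ref{fasthier_IDOE_facts}. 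It does not: none of the five items of that lemma yields ``$\omega^{\alpha}\stepdown{s}0$ implies $\alpha\stepdown{s}0$.'' Item 5 converts convergence of $\fast_\alpha(s)$ into a descent, but you are not given $\conv{\fast_{\omega_n^{k+1}}(s)}$ (the converse of item 5 is not stated), and item 3 concerns total convergence, not the stepdown relation. What you need is in effect a converse to Lemma \ref{stepdown_exp} — that the descent path of $\omega^{\alpha}$ projects, via leading exponents, onto the descent path of $\alpha$ — and this is a true but nontrivial fact requiring its own induction along the coded path. Inside a nonstandard model one cannot simply argue that $\omega_{n-1}^{k+1}<\omega_n^{k+1}$ and conclude, since $\stepdown{s}0$ is not provably downward closed under $<$. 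So as written the induction on $n$ does not get off the ground for $n\ge 2$; the argument is repairable, but only by adding and proving this auxiliary lemma.

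For comparison, the paper avoids all of this: it first gets $\omega_n^{k+1}\stepdown{s}\omega_n^k$ from Lemma \ref{stepdown_aux1}, and then observes that the canonical path is deterministic and that (for $s\ge1$) the only ordinals $\alpha$ with $\alpha\stepdown[1]{s}\omega_n^k$ are $\omega_n^k+1$ and an ordinal of the form $\omega_{n+1}$, the latter being too large to lie on a descending path starting at $\omega_n^{k+1}$. Hence the path must pass through $\omega_n^k+1$. This ``unique predecessor'' argument needs no induction on $n$, no lifting through the exponential, and no converse to Lemma \ref{stepdown_exp}; your approach buys an explicit description of the path at the cost of exactly the missing lemma identified above.
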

\begin{proof}
 By the Lemma \ref{stepdown_aux1}, we have $\omega^{k+1}_n\stepdown{s}\omega_n^{k}$. We show that on the step before $\omega_n^{k}$ on the $\stepdown{s}$-path from $\omega^{k+1}_n$ to $\omega^{k}_n$, we have $\omega^k_n+1$ and hence the lemma holds.  Case consideration shows that there are at most two possible $\alpha$-s such that $\alpha \stepdown[1]{s} \omega_n^k$: the ordinal $\alpha=\omega_n^k+1$ and  the ordinal $\alpha=\omega_{n+1}$, if $s+1=k$. But because $\omega_{n+1}>\omega_n^{k+1}$, any $\stepdown[1]{s}\,\,$-chain from $\omega_n^{k+1}$ to $\omega_n^k$ should go through $\omega_n^k+1$. Hence $\omega^{k+1}_n\stepdown{s} \omega_n^k+1$.
\end{proof}

\begin{lemma}[$\PA$] \label{stepdown_aux3} For all numbers $k$, $n$, and $m\le n$ if $\omega_n^{k+1}\stepdown{k} 0$   then $\mbox{$\omega_n^{k+1}\stepdown{k} \omega_m^{k+1}$}$.  
\end{lemma}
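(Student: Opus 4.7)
The plan is to prove the lemma by induction on $n$. The base case $n=0$ forces $m=0$ and is trivial; in the inductive step, the case $m=n$ is likewise trivial. For $m\le n-1$, transitivity of $\stepdown{k}$ together with the induction hypothesis applied at $n-1$ with target $m$ (whose convergence premise $\omega_{n-1}^{k+1}\stepdown{k}0$ follows from $\omega_n^{k+1}\stepdown{k}0$ and monotonicity of convergence, Lemma~\ref{fasthier_IDOE_facts}(3), applied to $\omega_{n-1}^{k+1}<\omega_n^{k+1}$) reduces the whole task to the single key relation
\[\omega_n^{k+1}\stepdown{k}\omega_{n-1}^{k+1}\qquad (n\ge 1).\]

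This key relation I also prove by induction on $n$. The base case $n=1$ asserts $\omega^{k+1}\stepdown{k}k+1$: iterating Lemma~\ref{stepdown_aux1} reduces the superscript step by step to yield $\omega_1^{k+1}\stepdown{k}\omega_1^{1}=\omega$, after which the single fundamental-sequence step $\omega\stepdown[1]{k}\omega[k]=k+1$ finishes. For $n\ge 2$ I unfold the tower notation
\[\omega_n^{k+1}=\omega^{\omega_{n-1}^{k+1}},\qquad \omega_{n-1}^{k+1}=\omega^{\omega_{n-2}^{k+1}},\]
apply the inner induction hypothesis to obtain $\omega_{n-1}^{k+1}\stepdown{k}\omega_{n-2}^{k+1}$, and invoke Lemma~\ref{stepdown_exp} to propagate this stepdown through the outer exponentiation, obtaining $\omega^{\omega_{n-1}^{k+1}}\stepdown{k}\omega^{\omega_{n-2}^{k+1}}$, which is precisely the desired relation.

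I do not expect any genuine obstacle. Once one reads $\omega_n^{k+1}$ as the tower $\omega^{\omega_{n-1}^{k+1}}$, the proof is a routine induction that simply pushes Lemma~\ref{stepdown_exp} up the tower. The only mild point requiring care is tracking that the convergence hypothesis is preserved at each invocation of the inductive hypothesis and of Lemma~\ref{stepdown_exp}, which is handled uniformly by monotonicity of convergence.
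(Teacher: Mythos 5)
Your overall strategy is the same as the paper's: reduce the statement by induction on $n$ to the single link $\omega_n^{k+1}\stepdown{k}\omega_{n-1}^{k+1}$, obtain the bottom case $\omega^{k+1}\stepdown{k}k+1$ by an explicit computation, and push the link up the tower with Lemma~\ref{stepdown_exp}. (Getting the bottom case by iterating Lemma~\ref{stepdown_aux1} down to $\omega_1^{1}=\omega$ is a harmless variant of the paper's route through $k+1\stepdown{k}1$ and Lemma~\ref{stepdown_exp}.)

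There is, however, one step that does not work as you justify it. At both places where you invoke an induction hypothesis you need the auxiliary premise $\omega_{n-1}^{k+1}\stepdown{k}0$, and you propose to get it from $\omega_n^{k+1}\stepdown{k}0$ via ``monotonicity of convergence, Lemma~\ref{fasthier_IDOE_facts}(3)''. That item reads: if $\alpha>\beta$ and $\conv{\fast_{\alpha}}$ then $\conv{\fast_{\beta}}$. It is a statement about totality of the functions $\fast_{\alpha}$, not about the stepdown relation; the hypothesis of Lemma~\ref{stepdown_aux3} is $\omega_n^{k+1}\stepdown{k}0$, and none of the listed items converts this into $\conv{\fast_{\omega_n^{k+1}}}$ (item~(5) goes in the opposite direction). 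So the cited lemma does not apply. What you actually need is a monotonicity principle for the stepdown relation itself --- that $\alpha\stepdown{k}0$ and $\beta<\alpha$ imply $\beta\stepdown{k}0$, or at least that $\omega^{\alpha}\stepdown{k}0$ implies $\alpha\stepdown{k}0$. This is true and provable (such lemmas appear in \cite{FriRatWei13}), but it is not among the facts restated in this paper and requires its own argument. Note that you cannot instead extract $\omega_{n-1}^{k+1}\stepdown{k}0$ from the link $\omega_n^{k+1}\stepdown{k}\omega_{n-1}^{k+1}$ together with determinism of the descent path, because in your inner induction that link is itself derived from the induction hypothesis at level $n-1$, whose premise is exactly the statement being sought --- the argument would be circular as organized. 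Either import the monotonicity lemma explicitly or prove the $\omega^{\alpha}\stepdown{k}0\Rightarrow\alpha\stepdown{k}0$ implication directly; with that in hand the rest of your argument goes through.
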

\begin{proof}
From the definition of $\stepdown{k}$ it follows that $k+1\stepdown{k} 1$. Thus from Lemma \ref{stepdown_exp} it follows that $\omega^{k+1}\stepdown{k} \omega$. We have $\omega\stepdown{k}k+1$. Hence $\omega^{k+1}\stepdown{k}k+1$. Now we use the latter and Lemma \ref{stepdown_exp} to prove the lemma by induction on $n$. 
\end{proof}

\begin{lemma}[$\PA$]\label{stepdown_aux4}  If $m\le n$, $\alpha\stepdown{m} \beta$, and $\conv{\fast_{\alpha}(n)}$ then $\conv{\fast_{\beta}(n)}$ and $\fast_{\alpha}(n)\ge\fast_{\beta}(n)$. 
\end{lemma}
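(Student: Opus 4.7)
The plan is to proceed by induction on the length $r$ of the path $\alpha = \gamma_0 \stepdown[1]{m} \gamma_1 \stepdown[1]{m} \cdots \stepdown[1]{m} \gamma_r = \beta$ witnessing $\alpha\stepdown{m}\beta$. The base case $r=0$ is immediate. For the inductive step it suffices to show that the first step preserves the two desired properties, i.e.\ $\conv{\fast_{\gamma_1}(n)}$ and $\fast_\alpha(n)\ge\fast_{\gamma_1}(n)$, after which the induction hypothesis applied to $\gamma_1\stepdown[r-1]{m}\beta$ closes the argument by transitivity.

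I would split on the nature of $\alpha$. If $\alpha$ is a successor ordinal, then $\gamma_1 = \alpha - 1$ regardless of the index, so $\alpha \stepdown[1]{n}\gamma_1$ as well, and Lemma \ref{fasthier_IDOE_facts}(1) immediately delivers the required facts. If $\alpha$ is a limit, then $\gamma_1 = \alpha[m]$; since $\fast_\alpha(n) = \fast_{\alpha[n]}(n)$ by definition, the task reduces to producing a $\stepdown{n}$-chain from $\alpha[n]$ down to $\alpha[m]$, after which Lemma \ref{fasthier_IDOE_facts}(1) again closes the case.

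The heart of the proof is therefore the following auxiliary claim: for every limit ordinal $\lambda<\varepsilon_0$ and every $m\le n$ with $\lambda\stepdown{n} 0$, one has $\lambda[n]\stepdown{n}\lambda[m]$. The hypothesis $\lambda\stepdown{n} 0$ is supplied by Lemma \ref{fasthier_IDOE_facts}(5) from $\conv{\fast_\alpha(n)}$. I would prove the auxiliary claim by a secondary induction on the syntactic size of the Cantor normal form of $\lambda$, which is a natural-number measure available in $\PA$. Writing $\lambda = \omega^{\lambda_0}+\cdots+\omega^{\lambda_k}$ with $\lambda_k > 0$, the prefix $\omega^{\lambda_0}+\cdots+\omega^{\lambda_{k-1}}$ is carried passively throughout, since the stepdown of a limit modifies only the last summand of its Cantor normal form. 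If $\lambda_k = 1$, then $\lambda[n]$ ends in $n+1$ and $\lambda[m]$ ends in $m+1$, and $n-m$ predecessor steps suffice. If $\lambda_k$ is itself a limit, then $\lambda[n]$ ends in $\omega^{\lambda_k[n]}$ and $\lambda[m]$ in $\omega^{\lambda_k[m]}$; the secondary IH applied to $\lambda_k$ yields $\lambda_k[n]\stepdown{n}\lambda_k[m]$, which Lemma \ref{stepdown_exp} lifts to $\omega^{\lambda_k[n]}\stepdown{n}\omega^{\lambda_k[m]}$.

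The hardest subcase is $\lambda_k = \delta+1$ with $\delta > 0$: then $\omega^\delta$ is itself a limit, $\lambda[n]$ has Cantor normal form ending in $n+1$ consecutive copies of $\omega^\delta$, and $\lambda[m]$ ends in $m+1$ such copies. Removing one copy of $\omega^\delta$ from the tail via $\stepdown{n}$ requires driving the final $\omega^\delta$ all the way down to $0$ through the intermediate limits that appear, which in turn demands carefully maintaining the invariant ``current ordinal $\stepdown{n} 0$'' at each stage so that Lemma \ref{stepdown_exp} and the secondary IH applied to limits smaller than $\omega^\delta$ remain available. Iterating the removal $n-m$ times lands on $\lambda[m]$, completing the auxiliary claim and hence the lemma.
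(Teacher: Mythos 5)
Your proof is correct and follows essentially the same route as the paper's: induct along the $\stepdown{m}$-chain, dispose of successor steps trivially, and reduce the limit step to showing $\lambda[n]\stepdown{n}\lambda[m]$ under $\lambda\stepdown{n}0$, after which Lemma \ref{fasthier_IDOE_facts} converts the resulting $\alpha\stepdown{n}\beta$ into the convergence and monotonicity claims. The only difference is that the paper obtains the auxiliary claim by citing \cite[Proposition~2.12]{FriRatWei13}, whereas you prove it directly by induction on the Cantor normal form; your sketch of that induction is sound (modulo routine facts about prefixes being carried passively along stepdown paths and $\omega^{\gamma}\stepdown{n}0$ yielding $\gamma\stepdown{n}0$).
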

\begin{proof} Using Lemma \ref{fasthier_IDOE_facts} it is sufficient to show that  $\alpha\stepdown{n} \beta$.  Consider the only sequnce $\gamma_0,\ldots,\gamma_r$ such that $\gamma_0=\alpha$, $\gamma_r=\beta$, and  $\gamma_i\stepdown[1]{m}\gamma_{i+1}$, for all $i<r$.  We show by induction on $i$ that for any $i<r$ we have $\gamma_i\stepdown{n}\gamma_{i+1}$ and $\conv{\fast_{\gamma_{i+1}}(n)}$. We first show that $\gamma_i\stepdown{n}\gamma_{i+1}$, assuming that $\conv{\fast_{\gamma_{i}}(n)}$ (we have it either from induction assumption or if $i=0$ we have it because  $\conv{\fast_{\alpha}(n)}$).  We consider two cases: $\gamma_i$ is a limit ordinal and $\gamma_i$ is a successor ordinal. The case of successor ordinal is trivial. If $\gamma_i$ is a limit ordinal then from $\conv{\fast_{\gamma_{i}}(n)}$  it follows that $\conv{\fast_{\gamma_{i}[n]}(n)}$ and thus by Lemma \ref{fasthier_IDOE_facts} we have $\gamma_i[n]\stepdown{n}0$. By \cite[Proposition~2.12]{FriRatWei13} we have $\gamma_i[n]\stepdown{n}\gamma_i[m]=\gamma_{i+1}$. Hence $\gamma_i\stepdown{n}\gamma_{i+1}$. Now from Lemma \ref{fasthier_IDOE_facts} it follows that $\conv{\fast_{\gamma_{i+1}}}(n)$. This finishes our inductive proof. Since we have $\gamma_i\stepdown{n}\gamma_{i+1}$, for any $i<r$, we clearly have $\gamma_0\stepdown{n}\gamma_r$, i.e.   $\alpha\stepdown{n} \beta$.
\end{proof}

\subsection{Transfinite induction} \label{transfinite_induction_section}

Using the representation of ordinals in $\PA$, we can formulate the schema of transfinite induction. For an ordinal $\alpha\leq\varepsilon_0$ and a number $n\geq0$, we write $\TI{\Pi_n}{\alpha}$ for the following schema:
\begin{equation}
\forall \beta<\alpha\, \left(\forall\, \gamma<\beta\, \varphi(\gamma) \to \varphi(\beta)\right)\to \forall \gamma<\alpha\, \varphi(\gamma), 
\end{equation}
where $\varphi$ is a $\Pi_n$-formula. Since there is a $\Pi_n$-partial truth definition $\fofl{True}_{\Pi_n}$ in $\IDOE$ (see Section \ref{meta_IDOE}), there is an instance of the schema that implies all other instances of it in 
$\IDOE$. We can thus identify $\TI{\Pi_n}{\alpha}$ with this instance, and consider $\TI{\Pi_n}{\alpha}$ to be a single formula.

It follows from Gentzen's work in \cite{Gen43} 
 that $\PA$ proves $\TI{\Pi_n}{\alpha}$ for all $n$ and $\alpha<\varepsilon_0$, and that it does not prove $\TI{\Pi_0}{\varepsilon_0}$. For a treatment of the amount of transfinite induction available in the fragments $\ISi{n}$ of $\PA$, see for example \cite{Som95}. 

Suppose we argue in $\PA$, and want to show that a certain property $\varphi$ holds for all ordinals less than some $\alpha<\varepsilon_0$. By the above, it suffices to show that 
\begin{equation}\label{progr}
\forall \beta<\alpha\, \left(\forall\, \gamma<\beta\, \varphi(\gamma) \to \varphi(\beta)\right). 
\end{equation}
A formula $\varphi$ for which (\ref{progr}) holds will be called \emph{progressive}. We note that for any $\alpha\leq \varepsilon_0$, $\ISi{1}$ verifies that the formula $\conv{\fast_{\alpha}}$ is progressive, i.e.\ that
\begin{equation}\label{fast_prog}
\forall \gamma< \beta\, \conv{\fast_{\gamma}}\to \conv{\fast_{\beta}}.
\end{equation}
To see that (\ref{fast_prog}) holds, note that by Definition \ref{fasthier}, the following are verifiable in $\ISi{1}$: 
\begin{enumerate}
\item $\conv{\fast_0}$
\item $\forall \alpha \left(\conv{\fast_{\alpha}}\to \conv{\fast_{\alpha+1}}\right)$
\item $\forall \lambda\leq \varepsilon_0\, \left(\lambda\in \mathit{Lim}\to (\forall \alpha<\lambda\,  \conv{\fast_{\alpha}}\to \conv{\fast_{\lambda}})\right)$
\end{enumerate}
Thus whether a function $\fast_{\alpha}$ (for some $\alpha\leq \varepsilon_0$) is provably total in some extension $\thr{T}$ of $\ISi{n}$ depends on the amount of transfinite induction available in $\thr{T}$.



\section{Transfinite iterations of provability predicates}\label{Transfinite_Section}
In the present section we will give precise definitions of  transfinite iterations of provability predicates and their duals. These notions are closely related to Turing-Feferman progressions \cite{Tur39,Fef62}.  Our presentation of this subject is based on the approach from \cite{Bek03} which itself is based on  \cite{Sch79}.

\begin{definition}We say that $(D,\prec)$ is an elementary linear ordering if $D$ is a subset of the natural numbers, for both $D$ and $\prec$ there are fixed bounded formulas of the language of $\thr{EA}$ that define them, and $\thr{EA}$ proves that $(D,\prec)$  is a linear ordering.
\end{definition}

Note, that for any elementary well-ordering $(D,\prec)$ there are $\Sigma_1$ formulas of the language of first-order arihmetic $\mathcal{L}$ that  are equivalent in $\thr{EA}$ to the standard defining formulas for $D$ and $\prec$. Because $\thr{EA}$ is a conservative extension of $\IDOE$, the choice of the formulas above is unique up to $\IDOE$-provable equivalence. Thus we can freely talk about provability of facts about an elementary well ordering within theories containing $\IDOE$.

We will define transfinite iterations of  provability predicates. Reflexive induction is an important method of reasoning about such iterations.
\begin{lemma}$\mbox{\cite[Lemma~2.4]{Bek03}\cite{Sch79}}$ \label{reflexive_induction} For any elementary linear ordering $\mbox{$(D,\prec)$}$, any theory $\thr{T}$ extending $\IDOE$ is closed under the following reflexive induction rule:
$$\frac{\forall \alpha \in D\, (\Box_{\thr{T}}\forall \beta\prec \alpha\, \fofl{F}(\beta)\to\fofl{F}(\alpha))}{\forall \alpha\in D\, \fofl{F}(\alpha)}.$$ 
\end{lemma}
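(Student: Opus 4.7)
The plan is to apply Löb's theorem to the sentence $G := \forall \alpha \in D\, \fofl{F}(\alpha)$. Since $\thr{T}\supseteq \IDOE$, the Hilbert-Bernays-Löb derivability conditions (and hence Löb's theorem) hold for $\Box_{\thr{T}}$ verifiably in $\IDOE$, so it suffices to show $\thr{T}\vdash \Box_{\thr{T}} G \to G$; one application of Löb then yields $\thr{T}\vdash G$. Notice that this route makes no use of well-foundedness of $\prec$, which matches the hypothesis that $(D,\prec)$ is merely a linear ordering: reflexive induction is exactly the surrogate that Löb's theorem provides in the absence of well-foundedness.

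The core of the argument, to be carried out inside $\thr{T}$, is the following. Assume the Löb hypothesis $\Box_{\thr{T}} G$, and fix an arbitrary $\alpha\in D$. By the numerator of the rule (which is a theorem of $\thr{T}$) the goal $\fofl{F}(\alpha)$ reduces to $\Box_{\thr{T}}(\forall \beta \prec \dot{\alpha}\, \fofl{F}(\beta))$. Inside the box, $G$ logically entails $\forall \beta\prec \alpha\,\fofl{F}(\beta)$, using the elementary fact that $\beta\prec\alpha\to \beta\in D$ (provable in $\IDOE$ from the definition of the ordering). Formalizing this trivial derivation, together with condition (2) of the HBL conditions, gives
\[
\IDOE \vdash \Box_{\thr{T}} G \to \Box_{\thr{T}}(\forall \beta\prec \dot{\alpha}\, \fofl{F}(\beta)),
\]
so $\fofl{F}(\alpha)$ follows; since $\alpha$ was arbitrary we obtain $G$, completing the proof of $\thr{T}\vdash \Box_{\thr{T}} G \to G$.

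The only point deserving care is the substitution step, where $\alpha$ is a free variable in the outer reasoning but must appear as the numeral $\dot\alpha$ inside the box. This is handled by the standard fact (already recalled in Section~\ref{ArTheories_Section}, and provable in $\IDOE$ from the HBL conditions together with provable $\Sigma_1$-completeness) that $\Box_{\thr{T}}\forall x\,\varphi(x)\to \Box_{\thr{T}}\varphi(\dot x)$, applied under the free variable $\alpha$. Given that $D$ and $\prec$ are elementary and so all the required arithmetization is entirely routine, I do not expect a real obstacle beyond being pedantic with the dot notation.
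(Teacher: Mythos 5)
Your proposal is correct and follows essentially the same route as the paper: both reduce the rule to $\thr{T}\vdash \Box_{\thr{T}} G\to G$ for $G=\forall\alpha\in D\,\fofl{F}(\alpha)$ and conclude by L\"ob's theorem, using that $\Box_{\thr{T}}G$ provably implies $\Box_{\thr{T}}\forall\beta\prec\alpha\,\fofl{F}(\beta)$. The only cosmetic difference is that the paper strengthens the boxed hypothesis to the $\alpha$-free sentence $G$ at the outset, which avoids the $\dot\alpha$ substitution step you handle explicitly.
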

\begin{proof} Suppose  $\thr{T}\vdash \forall \alpha \in D\, ((\Box_{\thr{T}}\forall \beta\prec \alpha\, \fofl{F}(\beta))\to\fofl{F}(\alpha))$. Then the sentence with stronger assumption is also derivable: $$\thr{T}\vdash \forall \alpha \in D\, ((\Box_{\thr{T}}\forall \beta \in D\,\fofl{F}(\beta))\to\fofl{F}(\alpha)).$$ We can also weaken the conclusion: $$\thr{T}\vdash \Box_{\thr{T}}\forall \alpha \in D \,\fofl{F}(\alpha)\to \forall \alpha \in D \,\fofl{F}(\alpha).$$ Therefore by Löb's theorem we have $$\thr{T}\vdash \forall \alpha\in D\,\fofl{F}(\alpha).$$
\vspace{-38pt}

\end{proof}

We fix  for the rest of the section  a $\Sigma_1$-provability predicate $\triangle$ for an arithmetical theory $\thr{T}$ containing $\IDOE$ that satisfies Hilbert-Bernays-Löb derivability conditions verifiably in $\IDOE$.  We denote by $\triangleDual$ the dual consistency predicate for $\triangle$. Also, we fix an elementary linear ordering  $(D,\prec)$  such that the least element of $(D,\prec)$ is $0^D$ and the fact that $0^D$ is the least element of $(D,\prec)$  is verifiable $\IDOE$. 

 We define iterations of $\triangle$ along $(D,\prec)$: $\triangle^\alpha\varphi$, where  $\alpha\in D$ and $\varphi$ is an arithmetical sentence. An iteration $\triangle^xy$ is an arithmetical formula with two free variables such that
$$\IDOE\vdash \forall \varphi \forall \alpha\in D\setminus \{0^D\}\, (\triangle^{\alpha} \varphi\mathrel{\leftrightarrow}\exists \beta\mathrel{\prec} \alpha \triangle\triangle^\beta \varphi).$$
Here and below, if we refer to Gödel numbers of iterations, we could also use zero times iterations. We define $\triangle^{0^D}\varphi$ to be equal to $\varphi$, i.e.\ more formally, $\triangle\triangle^\beta \varphi$ should be written as $$(\beta=0^D\to \triangle \varphi)\land (\beta\ne 0^D\to \triangle \triangle^{\beta}\varphi).$$

Existence of iterations follows from the Diagonal Lemma (Theorem \ref{diagonal_lemma}). Simple inspection of the last argument shows that the resulting formula is $\Sigma_1$. Actually any two iterations are $\IDOE$-provably equivalent (this fact resembles uniqueness of smooth progressions \cite{Bek03}).  

\begin{lemma} \label{prov_iter_unique} For any two iterations $(\triangle^xy)_1$ and $(\triangle^xy)_2$ of $\triangle$ along $(D,\prec)$ we have
\begin{equation}\label{triangle_equiv_eq_1}\IDOE\vdash \forall \alpha\in D\setminus \{0^D\}\,\forall \varphi ((\triangle^\alpha\varphi)_1\mathrel{\leftrightarrow}(\triangle^\alpha\varphi)_2)).\end{equation}
\end{lemma}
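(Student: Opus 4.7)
The plan is to apply the reflexive induction rule (Lemma \ref{reflexive_induction}) with outer theory $\IDOE$ to the formula
\[
\fofl{F}(\alpha) := \forall \varphi\, \bigl((\triangle^\alpha\varphi)_1 \mathrel{\leftrightarrow} (\triangle^\alpha\varphi)_2\bigr).
\]
I would actually prove $\IDOE \vdash \forall \alpha \in D\, \fofl{F}(\alpha)$; the stated claim then follows by restricting to $D \setminus \{0^D\}$. Reflexive induction is well suited here because the defining equations of the two iterations are themselves self-referential fixed points produced by the Diagonal Lemma, so we cannot hope to proceed by ordinary induction on $\prec$.

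For the induction step I reason in $\IDOE$: fix $\alpha \in D$ and assume $\Box_{\IDOE}\forall \beta \prec \alpha\, \fofl{F}(\beta)$, then derive $\fofl{F}(\alpha)$. If $\alpha = 0^D$, then by the convention $\triangle^{0^D}\varphi = \varphi$ both sides equal $\varphi$, so $\fofl{F}(0^D)$ is immediate. Otherwise, by the defining equivalences,
\[
(\triangle^\alpha\varphi)_i \mathrel{\leftrightarrow} \exists \beta \prec \alpha\, \triangle(\triangle^\beta\varphi)_i \qquad (i = 1, 2),
\]
and it suffices to establish $\triangle(\triangle^\beta\varphi)_1 \mathrel{\leftrightarrow} \triangle(\triangle^\beta\varphi)_2$ for every $\beta \prec \alpha$ and every $\varphi$, since the two existential statements would then coincide.

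The key move is transporting the reflexive hypothesis from $\Box_{\IDOE}$ into $\triangle$. Because $\triangle$ is a provability predicate for a theory $\thr{T} \supseteq \IDOE$ satisfying the HBL conditions verifiably in $\IDOE$, we have $\IDOE \vdash \Box_{\IDOE}\psi \to \triangle\psi$ for every $\psi$ (any $\IDOE$-proof is a $\thr{T}$-proof; formally one applies the provable $\Sigma_1$-completeness of $\triangle$ to the $\Sigma_1$-sentence $\Box_{\IDOE}\psi$ together with the inclusion of $\IDOE$-axioms among the $\thr{T}$-axioms). Hence we obtain $\triangle\bigl(\forall \beta \prec \dot\alpha\, \fofl{F}(\beta)\bigr)$; specializing the bound quantifier under the box and applying HBL axiom K yields $\triangle\bigl((\triangle^\beta\varphi)_1 \mathrel{\leftrightarrow} (\triangle^\beta\varphi)_2\bigr)$ for every specific $\beta \prec \alpha$ and $\varphi$, and a second application of axiom K gives exactly the biconditional $\triangle(\triangle^\beta\varphi)_1 \mathrel{\leftrightarrow} \triangle(\triangle^\beta\varphi)_2$ we need. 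This closes the induction.

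The only delicate step is the transport: moving from $\Box_{\IDOE}$ into $\triangle$ and then performing specialization and distribution under $\triangle$ with $\beta$ and $\varphi$ treated via dot notation. Once those bookkeeping details are handled using HBL, the whole argument is a short application of reflexive induction, parallel to the uniqueness of smooth progressions mentioned in the surrounding text.
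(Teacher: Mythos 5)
Your proof is correct and follows essentially the same route as the paper's: reflexive induction on $\fofl{F}(\alpha):=\forall\varphi\,((\triangle^{\alpha}\varphi)_1\leftrightarrow(\triangle^{\alpha}\varphi)_2)$, unfolding the defining equivalence and transporting the boxed induction hypothesis from $\Box_0$ into $\triangle$ before distributing over the implication. One small caveat: your parenthetical justification of $\IDOE\vdash\Box_0\psi\to\triangle\psi$ via provable $\Sigma_1$-completeness only yields $\Box_0\psi\to\triangle\Box_0\psi$ (getting from there to $\triangle\psi$ would need reflection inside $\triangle$); the correct and sufficient reason is the one you also state informally --- that $\IDOE$-proofs convert to $\thr{T}$-proofs verifiably in $\IDOE$ --- which is exactly what the paper invokes when it writes ``because $\thr{T}$ contains $\IDOE$''.
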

\begin{proof}
We use reflexive induction to prove it. We need to show that
\begin{equation}\begin{aligned}\IDOE\vdash \forall \alpha\in D\setminus \{0^D\} \,(& \Box_0\forall \beta\prec \alpha \forall \varphi ((\triangle^\beta\varphi)_1\mathrel{\leftrightarrow}(\triangle^\beta\varphi)_2)\to \\ & \forall \varphi ((\triangle^\alpha\varphi)_1\mathrel{\leftrightarrow}(\triangle^\alpha\varphi)_2)).\end{aligned}\end{equation}
By definition of an iteration the latter will follow from 
\begin{equation}\begin{aligned}\IDOE\vdash \forall \alpha\in D\setminus \{0^D\} \,( & \Box_0\forall \beta\prec \alpha \forall \varphi ((\triangle^\beta\varphi)_1\mathrel{\leftrightarrow}(\triangle^\beta\varphi)_2)\to \\ &  \forall \varphi (\exists \beta \mathrel{\prec}\alpha \triangle ( \triangle^\beta\varphi)_1\mathrel{\leftrightarrow}\exists \beta \mathrel{\prec}\alpha \triangle(\triangle^\beta\varphi)_2)).\end{aligned}\end{equation}
Because there is a symmetry between $(\triangle^xy)_1$ and $(\triangle^xy)_2$, it is enough to show that
\begin{equation}\label{triangle_equiv_eq_3}\begin{aligned}\IDOE\vdash \forall \alpha\in D\setminus \{0^D\} \,( & \Box_0\forall \beta\prec \alpha \forall \varphi ((\triangle^\beta\varphi)_1\to(\triangle^\beta\varphi)_2) \to \\ &  \forall \varphi (\exists \beta \mathrel{\prec}\alpha \triangle ( \triangle^\beta\varphi)_1\to\exists \beta \mathrel{\prec}\alpha \triangle(\triangle^\beta\varphi)_2)).\end{aligned}\end{equation}
Clearly, we have 
\begin{equation}\begin{aligned}\IDOE\vdash \forall \alpha\in D\setminus \{0^D\} \,(  \Box_0\forall &\beta\prec \alpha \forall \varphi ((\triangle^\beta\varphi)_1\to(\triangle^\beta\varphi)_2) \to \\ &  \forall \beta\prec \alpha \forall \varphi \Box_0((\triangle^\beta\varphi)_1 \to (\triangle^\beta\varphi)_2)).\end{aligned}\end{equation}
Because $\thr{T}$ contains $\IDOE$ we have 
$$\begin{aligned} \IDOE\vdash \forall \varphi \forall \beta\in D\setminus \{0^D\} (\Box_0 & ((\triangle^\beta\varphi)_1 \to (\triangle^\beta\varphi)_2)\\ & \to \triangle((\triangle^\beta\varphi)_1\to (\triangle^\beta\varphi)_2)).\end{aligned}$$
Thus
$$\begin{aligned}\IDOE\vdash  \forall \varphi \forall \beta\in D\setminus \{0^D\} (\Box_0 & ((\triangle^\beta\varphi)_1 \to (\triangle^\beta\varphi)_2)\\ &\to (\triangle(\triangle^\beta\varphi)_1\to \triangle (\triangle^\beta\varphi)_2)).\end{aligned}$$
Hence (\ref{triangle_equiv_eq_3}) holds and we have (\ref{triangle_equiv_eq_1}).
\end{proof}

In the same fashion as iterations of $\triangle$ we define the dual notion of  iterations $\triangleDual^xy$ of $\triangleDual$.  $\triangleDual^xy$  is an arithmetical formula with two free variables such that
$$\IDOE\vdash \forall \varphi \forall \alpha\in D\setminus \{0^D\}\,(\triangleDual^{\alpha} \varphi\mathrel{\leftrightarrow}\forall \beta\mathrel{\prec} \alpha (\triangleDual\triangleDual^\beta \varphi)).$$

Existence of iterations $\triangleDual^xy$  again follows from Diagonal Lemma.  
\begin{lemma} \label{consistency_iter_unique} For any two iterations $(\triangleDual^xy)_1$ and $(\triangleDual^xy)_2$ of $\triangleDual$ along $(D,\prec)$ we have
\begin{equation}\label{triangledown_equiv_eq_1}\IDOE\vdash \forall \alpha\in D\setminus \{0^D\}\,\forall \varphi ((\triangleDual^\alpha\varphi)_1\mathrel{\leftrightarrow}(\triangleDual^\alpha\varphi)_2)).\end{equation}
\end{lemma}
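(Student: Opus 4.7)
My plan is to mirror the proof of the preceding Lemma \ref{prov_iter_unique}, dualizing the key steps. By reflexive induction (Lemma \ref{reflexive_induction}), to establish (\ref{triangledown_equiv_eq_1}) it suffices to show
\begin{equation*}
\IDOE\vdash \forall \alpha\in D\setminus \{0^D\}\,(\Box_0 \forall \beta\prec \alpha \forall \varphi ((\triangleDual^\beta\varphi)_1\leftrightarrow(\triangleDual^\beta\varphi)_2)\to \forall \varphi ((\triangleDual^\alpha\varphi)_1\leftrightarrow(\triangleDual^\alpha\varphi)_2)).
\end{equation*}
Unfolding the defining equivalence for both iterations, the consequent becomes
\begin{equation*}
\forall \varphi\,\bigl(\forall \beta\prec\alpha\,\triangleDual(\triangleDual^\beta\varphi)_1\leftrightarrow \forall \beta\prec\alpha\,\triangleDual(\triangleDual^\beta\varphi)_2\bigr),
\end{equation*}
so by the symmetry between the two iterations it is enough to prove one direction of this equivalence under a universally quantified $\varphi$.

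From the reflexive-induction hypothesis we obtain, for every $\beta\prec\alpha$ and every $\varphi$, that $\Box_0((\triangleDual^\beta\varphi)_1\leftrightarrow(\triangleDual^\beta\varphi)_2)$. Because $\thr{T}$ contains $\IDOE$, this lifts to $\triangle((\triangleDual^\beta\varphi)_1\leftrightarrow(\triangleDual^\beta\varphi)_2)$. Recalling that $\triangleDual\psi$ abbreviates $\neg\triangle\neg\psi$, I use the second Hilbert--Bernays--Löb condition for $\triangle$ (provable in $\IDOE$) to propagate the equivalence through negation, obtaining $\triangle(\neg(\triangleDual^\beta\varphi)_1 \leftrightarrow \neg(\triangleDual^\beta\varphi)_2)$, and therefore $\triangle\neg(\triangleDual^\beta\varphi)_1 \leftrightarrow \triangle\neg(\triangleDual^\beta\varphi)_2$. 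Taking negations of both sides yields $\triangleDual(\triangleDual^\beta\varphi)_1 \leftrightarrow \triangleDual(\triangleDual^\beta\varphi)_2$. Quantifying over $\beta\prec\alpha$ provides the desired implication, and the symmetric one follows identically.

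I expect the argument to be essentially routine once the dualization is set up correctly; the only mild subtlety, compared with Lemma \ref{prov_iter_unique}, is the need to first transfer the internal equivalence to $\triangle$ and then translate across negations to reach $\triangleDual$. No additional obstacles are anticipated.
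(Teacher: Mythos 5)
Your proof is correct and takes exactly the approach the paper intends: the paper's own proof of this lemma simply states that it can be proved in the same fashion as Lemma \ref{prov_iter_unique}, and your dualization --- reflexive induction, lifting the internal equivalence from $\Box_0$ to $\triangle$, and passing it through the negations in $\triangleDual\psi = \lnot\triangle\lnot\psi$ --- is precisely that argument carried out in detail. No gaps.
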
 
\begin{proof} Can be proved in the same fashion as Lemma \ref{prov_iter_unique}. \end{proof}

Because we have existence and uniqueness (up to provable equivalence), we use iterations $\triangle^\alpha\varphi$ and $\triangleDual^\alpha\varphi$ freely, without specifing explicit formulas.

Let us denote by $\fofl{Succ}_D(\alpha,\beta)$ the formula $$\alpha\in D\land \beta\in D\land \alpha\prec\beta\land\forall \gamma\in D \,\lnot(\alpha\prec \gamma \land \gamma\prec \beta).$$Let us denote by $D^{\mathrm{lim}}$ the set of all  $\alpha\in D$ such that $$\alpha\ne0^D\land \forall \beta\in D\,\lnot \fofl{Succ}_D(\beta,\alpha).$$

\begin{lemma}[$\IDOE$]\label{iteration_facts_1}The following properties of iterations of $\triangle$ and $\triangleDual$ hold:
\begin{enumerate}
\item \label{iteration_facts_1_1}$\forall \varphi\forall  \alpha\in D\setminus \{0^D\}\, (\lnot\triangle^\alpha\lnot\varphi\mathrel{\leftrightarrow} \triangleDual^{\alpha}\varphi)$;
\item \label{iteration_facts_1_2}$\forall \varphi\forall  \alpha\in D\setminus \{0^D\}\,(\triangle\varphi \to  \triangle^\alpha\varphi )$;
\item \label{iteration_facts_1_3}$\forall \varphi\forall  \alpha,\beta\in D\setminus \{0^D\}\,(\alpha\prec \beta \to (\triangle^\alpha\varphi \to  \triangle^\beta\varphi ))$;
\item \label{iteration_facts_1_4}$\forall \alpha \in D\setminus \{0^D\} \,\forall \varphi,\psi(\triangle^{\alpha}(\varphi\to\psi)\to(\triangle^{\alpha}\varphi \to \triangle^{\alpha}\psi))$;
\item \label{iteration_facts_1_5}$\forall \alpha \in D\setminus \{0^D\}\,\forall \varphi \in \Sigma_1\text{-Sen}(\fofl{True}_{\Sigma_1}(\varphi)\to  \triangle^{\alpha}\varphi)$;
\item \label{iteration_facts_1_6}$\forall \alpha \in D\setminus \{0^D\}\,\forall \varphi (\triangle^{\alpha}\varphi\to \triangle^{\alpha}\triangle^{\alpha}\varphi)$;
\item  \label{iteration_facts_1_7}$\forall \alpha\in D\setminus \{0^D\}\,\forall \beta(\fofl{Succ_D}(\beta,\alpha)\to \forall \varphi(\triangle^{\alpha}\varphi\mathrel{\leftrightarrow} \triangle\triangle^{\beta}\varphi))$;
\item  \label{iteration_facts_1_8}$\forall \alpha\in D^{\mathrm{lim}}\,\forall \varphi(\triangle^{\alpha}\varphi\mathrel{\leftrightarrow}\exists \beta\prec \alpha(\beta\in D\setminus\{0^D\} \land \triangle^{\beta}\varphi))$.

\end{enumerate}
\end{lemma}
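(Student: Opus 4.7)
The plan is to handle the eight clauses in an order that exploits the Hilbert--Bernays--Löb conditions for $\triangle$, using the defining fixed-point equations $\triangle^\alpha\varphi\leftrightarrow \exists\beta\prec\alpha\,\triangle\triangle^\beta\varphi$ and $\triangleDual^\alpha\varphi\leftrightarrow\forall\beta\prec\alpha\,\triangleDual\triangleDual^\beta\varphi$ together with reflexive induction (Lemma \ref{reflexive_induction}) for the two genuinely recursive clauses. Throughout I use without comment that $\triangle^\alpha\varphi$ is $\Sigma_1$, so that the provable $\Sigma_1$-completeness of $\triangle$ applies to it.

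I would prove clause (\ref{iteration_facts_1_1}) by reflexive induction on $\alpha$: from the defining equation for $\triangle^\alpha$ one gets $\lnot\triangle^\alpha\lnot\varphi \leftrightarrow \forall\beta\prec\alpha\,\triangleDual\lnot\triangle^\beta\lnot\varphi$, and the induction hypothesis, pushed under the outer $\triangleDual$, rewrites $\lnot\triangle^\beta\lnot\varphi$ as $\triangleDual^\beta\varphi$ for all $\beta\prec\alpha$ with $\beta\ne 0^D$; the case $\beta=0^D$ is handled separately by unfolding the convention $\triangle^{0^D}\varphi=\varphi$. Clause (\ref{iteration_facts_1_2}) is immediate: $0^D\prec\alpha$, so $\triangle\triangle^{0^D}\varphi=\triangle\varphi$ witnesses the existential in the definition of $\triangle^\alpha\varphi$. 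For clause (\ref{iteration_facts_1_3}) one applies provable $\Sigma_1$-completeness to the $\Sigma_1$-sentence $\triangle^\alpha\varphi$ to obtain $\triangle\triangle^\alpha\varphi$, and then uses $\alpha$ itself as the witness in the fixed-point equation for $\triangle^\beta\varphi$.

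For clause (\ref{iteration_facts_1_4}) I would use a second reflexive induction: given $\triangle^\alpha(\varphi\to\psi)$ and $\triangle^\alpha\varphi$, pick witnesses $\beta_1,\beta_2\prec\alpha$ for each of the two hypotheses, pass to their $\prec$-maximum $\beta$ and conclude $\triangle\triangle^\beta(\varphi\to\psi)$ and $\triangle\triangle^\beta\varphi$ by pushing clause (\ref{iteration_facts_1_3}) under $\triangle$; then apply normality of $\triangle$ together with the inductive hypothesis applied inside $\triangle$ at the index $\beta$, so that $\triangle\triangle^\beta\psi$ witnesses $\triangle^\alpha\psi$. Clause (\ref{iteration_facts_1_5}) is now $\Sigma_1$-completeness of $\triangle$ followed by clause (\ref{iteration_facts_1_2}), and clause (\ref{iteration_facts_1_6}) is clause (\ref{iteration_facts_1_5}) applied to the $\Sigma_1$-sentence $\triangle^\alpha\varphi$. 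Clause (\ref{iteration_facts_1_7}) collapses the existential in the defining equation for $\triangle^\alpha$ to the single witness $\gamma=\beta$ using clause (\ref{iteration_facts_1_3}) inside $\triangle$, with the degenerate case $\beta=0^D$ reduced via clause (\ref{iteration_facts_1_2}) and the convention on $\triangle^{0^D}$. Finally, clause (\ref{iteration_facts_1_8}) follows because for $\alpha\in D^{\mathrm{lim}}$ any $\gamma\prec\alpha$ admits some $\delta\in D$ with $\gamma\prec\delta\prec\alpha$ and $\delta\ne 0^D$, after which clause (\ref{iteration_facts_1_3}) supplies both directions.

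The main obstacle is managing the reflexive inductions for clauses (\ref{iteration_facts_1_1}) and (\ref{iteration_facts_1_4}): the hypothesis is only available inside the outer box, so every appeal to it must occur under a provability operator, and one must isolate the degenerate index $\beta=0^D$ that is not covered by the hypothesis. Once these are treated correctly, the remaining clauses are straightforward bookkeeping with the Hilbert--Bernays--Löb conditions for $\triangle$.
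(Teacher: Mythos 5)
Your proposal is correct and follows essentially the same route as the paper, which itself only sketches the argument (reflexive induction for the early clauses, item \ref{iteration_facts_1_6} from item \ref{iteration_facts_1_5}, item \ref{iteration_facts_1_7} from item \ref{iteration_facts_1_3}, item \ref{iteration_facts_1_8} from items \ref{iteration_facts_1_7} and \ref{iteration_facts_1_3}); you correctly identify that only clauses \ref{iteration_facts_1_1} and \ref{iteration_facts_1_4} genuinely need reflexive induction and that the $\beta=0^D$ case must be split off since the convention $\triangle^{0^D}\varphi=\varphi$ is not covered by the induction hypothesis. The only cosmetic imprecision is in clause \ref{iteration_facts_1_8}, where the forward direction uses the defining fixed-point equation at the interpolated index $\delta$ (with witness $\gamma$) rather than clause \ref{iteration_facts_1_3} itself, but the essential interpolation step is exactly right.
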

\begin{proof} The proof of items \ref{iteration_facts_1_1}, \ref{iteration_facts_1_2}, \ref{iteration_facts_1_3}, \ref{iteration_facts_1_4}, and \ref{iteration_facts_1_5} is straightforward by using reflexive induction. Item \ref{iteration_facts_1_6} follows from item \ref{iteration_facts_1_5}.   We prove item \ref{iteration_facts_1_7} using item \ref{iteration_facts_1_3} and we prove item \ref{iteration_facts_1_8} using items \ref{iteration_facts_1_7} and \ref{iteration_facts_1_3}.
\end{proof}

Lemma \ref{iteration_facts_1} gives us a number of facts about iterations of provability predicates and their duals. We will frequently use them below without explicitely refering to them.

Note that items \ref{iteration_facts_1_2}, \ref{iteration_facts_1_4}, \ref{iteration_facts_1_6} of Lemma \ref{iteration_facts_1}  yield $\IDOE$-verifiable Hilbert-Bernays-Löb derivability conditions for each $\triangle^{\alpha}$. The latter with Lemma \ref{iteration_facts_1} item \ref{iteration_facts_1_1} means that  each $\triangleDual^{\alpha}$ is dual for the provability predicate $\triangle^{\alpha}$.

\begin{lemma}  \label{iterations_monotonicity_for_predicates}Suppose $\thr{T}$ is an arithmetical theory such that $\IDOE\subseteq \thr{T}$, $\triangle_1$ and $\triangle_2$ are $\Sigma_1$ provability predicates that satisfy $\IDOE$-verifiable Hilbert-Bernays-Löb derivability conditions, and $\thr{T}\vdash \forall \varphi \,(\triangle_1\varphi\to \triangle_2\varphi)$. Then $$\thr{T}\vdash\forall \alpha\in D\setminus \{0^D\} \,\forall \varphi \,(\triangle_1^{\alpha}\varphi\to \triangle_2^{\alpha}\varphi).$$
\end{lemma}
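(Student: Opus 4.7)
My plan is to apply reflexive induction (Lemma~\ref{reflexive_induction}) inside $\thr{T}$ to the formula
$$\fofl{F}(\alpha) := \forall \varphi\,(\triangle_1^{\alpha}\varphi \to \triangle_2^{\alpha}\varphi),$$
which reduces the task to proving in $\thr{T}$ that for every $\alpha \in D$,
$$\Box_{\thr{T}}\forall \beta \prec \alpha\,\fofl{F}(\beta) \;\to\; \fofl{F}(\alpha).$$
Since $\triangle_i^{0^D}\varphi$ is $\varphi$ by definition, $\fofl{F}(0^D)$ is immediate, so only the case $\alpha \ne 0^D$ requires real work.

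Reasoning in $\thr{T}$ under the reflexive hypothesis, suppose $\triangle_1^{\alpha}\varphi$. By the recursive definition there is some $\beta \prec \alpha$ with $\triangle_1 \triangle_1^{\beta}\varphi$. I would then combine two ingredients: (i) the lemma's hypothesis $\triangle_1 \psi \to \triangle_2 \psi$, instantiated at $\psi := \triangle_1^{\beta}\varphi$, gives $\triangle_2 \triangle_1^{\beta}\varphi$; (ii) instantiating the reflexive hypothesis at $\beta$ and $\varphi$ via the usual dot-notation argument (using normality and necessitation of $\Box_{\thr{T}}$) gives $\Box_{\thr{T}}(\triangle_1^{\beta}\varphi \to \triangle_2^{\beta}\varphi)$. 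Transferring (ii) from $\Box_{\thr{T}}$ to $\triangle_2$ yields $\triangle_2(\triangle_1^{\beta}\varphi \to \triangle_2^{\beta}\varphi)$, and combining with (i) via the $K$-axiom for $\triangle_2$ (one of its HBL conditions) gives $\triangle_2 \triangle_2^{\beta}\varphi$, which witnesses $\triangle_2^{\alpha}\varphi$ as required.

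The main obstacle is precisely the transfer $\Box_{\thr{T}}\chi \Rightarrow \triangle_2\chi$ invoked above. In the natural setting of this section the theory underlying $\triangle_2$ contains $\thr{T}$, so $\IDOE \vdash \Box_{\thr{T}}\chi \to \triangle_2\chi$ holds uniformly in $\chi$ and the passage is immediate. If such containment were not available one would instead carry out the reflexive induction with $\triangle_2$ in place of $\Box_{\thr{T}}$ (legitimate because $\triangle_2$ satisfies the $\IDOE$-verifiable HBL conditions) and transport the conclusion back into $\thr{T}$ using the lemma's hypothesis $\forall \psi(\triangle_1\psi \to \triangle_2\psi)$. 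A secondary point to be careful about is the formalization of the instantiation inside $\Box_{\thr{T}}$, but this is routine via dot notation together with normality and necessitation.
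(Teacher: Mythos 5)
Your skeleton --- reflexive induction on $F(\alpha):=\forall\varphi\,(\triangle_1^{\alpha}\varphi\to\triangle_2^{\alpha}\varphi)$, with the inductive step unfolding $\triangle_1^{\alpha}\varphi$ as $\triangle_1\triangle_1^{\beta}\varphi$ for some $\beta\prec\alpha$, passing to $\triangle_2\triangle_1^{\beta}\varphi$ by the lemma's hypothesis, and closing via the reflexive induction hypothesis under $\triangle_2$ --- is exactly what the paper's one-line proof (``By reflexive induction'') intends, and you have correctly isolated the one step that needs care: converting $\Box_{\thr{T}}(\triangle_1^{\beta}\varphi\to\triangle_2^{\beta}\varphi)$ into $\triangle_2(\triangle_1^{\beta}\varphi\to\triangle_2^{\beta}\varphi)$.

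Your resolution of that step, however, does not survive contact with the predicates this lemma is actually applied to. You justify the transfer by ``the theory underlying $\triangle_2$ contains $\thr{T}$''. This conflates the $\thr{T}$ of the section's standing convention (the theory numerated by the fixed $\triangle$, assumed only to contain $\IDOE$) with the ambient theory $\thr{T}$ of the lemma over which the reflexive induction runs. In the paper's uses (Lemma \ref{epsilon_0_upper_bound}, Theorem \ref{Epsilon_note_root_theorem}) the predicate $\triangle_2$ is a slow provability predicate and the ambient theory is $\PA$; the theory numerated by $\triangle_2$ does contain $\PA$ \emph{extensionally}, but what your argument needs is the $\thr{T}$-verifiable implication $\forall\chi\,(\Box_{\thr{T}}\chi\to\triangle_2\chi)$, and for slow provability this is precisely what fails: Corollary \ref{between} states $\PA\nvdash\Box\bot\to\triangle\bot$. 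The only transfer uniformly available from the section's standing assumptions is $\Box_0\chi\to\triangle_2\chi$, so the reflexive induction must be run over $\IDOE$; that in turn requires the hypothesis $\forall\varphi\,(\triangle_1\varphi\to\triangle_2\varphi)$ to be $\IDOE$-provable, which is what actually holds in every application (e.g.\ $\IDOE$ proves the true $\Sigma_1$-sentence $\conv{\fast_{\varepsilon_0}(n_0)}$ for the relevant standard $n_0$, hence proves $\forall\varphi\,(\Box_1\varphi\to\triangle\varphi)$). Your fallback does not repair the gap either: running reflexive induction ``with $\triangle_2$ in place of $\Box_{\thr{T}}$'' is an instance of Lemma \ref{reflexive_induction} for the theory numerated by $\triangle_2$, so its premise --- and hence the lemma's hypothesis --- would have to be provable in that theory, which is not given, and the resulting conclusion would be a theorem of that theory rather than of $\thr{T}$. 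As it stands, then, your proof establishes the lemma only under the extra assumption $\thr{T}\vdash\forall\chi\,(\Box_{\thr{T}}\chi\to\triangle_2\chi)$, which excludes the intended examples; the transfer step needs to be rerouted through $\Box_0$ as above.
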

\begin{proof} By reflexive induction.\end{proof}

For the rest of the section we assume that $(D,\prec)$ is Cantor ordinal notations for the ordinals $\le\varepsilon_0$ as defined in Section \ref{Ordinal_Section}.
\begin{lemma}[$\IDOE$] \label{iterations_and_addition} $\forall  \alpha\ge 1\forall \beta\le \varepsilon_0\forall \varphi\,(\triangle^{\alpha}\triangle^{\beta}\varphi \mathrel{\leftrightarrow} \triangle^{\beta+\alpha}\varphi)$ 
\end{lemma}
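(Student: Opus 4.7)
The plan is to prove this by reflexive induction on $\alpha$ (Lemma~\ref{reflexive_induction}) with induction predicate
$$F(\alpha) \;\equiv\; \alpha \ge 1 \to \forall \beta\le \varepsilon_0\,\forall \varphi\,(\triangle^{\alpha}\triangle^{\beta}\varphi \leftrightarrow \triangle^{\beta+\alpha}\varphi).$$
The crucial observation is that although the reflexive hypothesis only gives $\Box_0 \forall \gamma\prec\alpha\, F(\gamma)$ externally, since $\thr{T}$ extends $\IDOE$ verifiably this propagates into $\triangle \forall \gamma\prec\alpha\, F(\gamma)$, so the induction hypothesis is usable \emph{under} $\triangle$. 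I will therefore arrange the argument so that every appeal to $F(\gamma)$ for $\gamma\prec\alpha$ sits beneath a $\triangle$.

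The cases $\alpha=0$ (vacuous) and $\alpha=1$ (both sides unfold to $\triangle\triangle^{\beta}\varphi$ via item~\ref{iteration_facts_1_7} of Lemma~\ref{iteration_facts_1} and the convention $\triangle^{0}\psi=\psi$) are immediate. For a successor $\alpha=\gamma+1$ with $\gamma\ge 1$, item~\ref{iteration_facts_1_7} rewrites the two sides as $\triangle\triangle^{\gamma}\triangle^{\beta}\varphi$ and $\triangle\triangle^{\beta+\gamma}\varphi$ respectively; the induction hypothesis under $\triangle$ yields $\triangle(\triangle^{\gamma}\triangle^{\beta}\varphi \leftrightarrow \triangle^{\beta+\gamma}\varphi)$, which derivability condition~\ref{hbl2} turns into the desired equivalence.

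The limit case carries the real work. I unfold both sides by the defining equation to
$$\triangle^{\alpha}\triangle^{\beta}\varphi \leftrightarrow \exists \gamma\prec\alpha\,\triangle\triangle^{\gamma}\triangle^{\beta}\varphi, \qquad \triangle^{\beta+\alpha}\varphi \leftrightarrow \exists \delta\prec \beta+\alpha\,\triangle\triangle^{\delta}\varphi.$$
The induction hypothesis, applied under $\triangle$, converts the first existential into $\exists \gamma\prec\alpha\,\triangle\triangle^{\beta+\gamma}\varphi$ (the edge case $\gamma=0$ is automatic because $\triangle^{0}\triangle^{\beta}\varphi = \triangle^{\beta+0}\varphi$ by definition). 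What remains is a purely ordinal-arithmetic reindexing: for $\delta\ge \beta$ we use cancellation to write $\delta=\beta+\gamma$ with $\gamma\prec\alpha$; for $1\le\delta<\beta$ we absorb $\delta$ into $\gamma=0$ by pushing $\triangle^{\delta}\varphi\to \triangle^{\beta}\varphi$ (item~\ref{iteration_facts_1_3}) through $\triangle$ via condition~\ref{hbl2}; the $\delta=0$ corner case uses $\Sigma_1$-completeness to lift $\triangle\varphi$ to $\triangle\triangle\varphi$, then item~\ref{iteration_facts_1_2} of Lemma~\ref{iteration_facts_1} (under $\triangle$) to get $\triangle\triangle^{\beta}\varphi$; throughout I use that $0\prec\alpha$ since $\alpha$ is a limit.

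The main obstacle is the limit case: simultaneously keeping every use of the induction hypothesis strictly beneath $\triangle$ (so that the reflexive form $\Box_0 F(\gamma)$ rather than an external $F(\gamma)$ suffices), and dispatching the boundary subcases ($\delta<\beta$, $\delta=0$, and the trivial $\beta=0$ in which $\beta+\alpha=\alpha$ and the claim collapses). Basic ordinal arithmetic facts --- cancellation of the left summand and preservation of limithood under left addition --- are used freely; these are routine in $\IDOE$ via the Cantor-notation formalization recalled in Section~\ref{Ordinal_Section}.
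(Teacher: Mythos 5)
Your proof is correct and follows exactly the route the paper takes: the paper's entire proof of this lemma is ``By reflexive induction on $\alpha$,'' and your write-up supplies the details of that induction (keeping the hypothesis under $\triangle$ via $\Box_0\psi\to\triangle\psi$, splitting into successor and limit cases, and handling the boundary subcases $\gamma=0$, $\delta<\beta$, $\delta=0$) in a way that checks out.
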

\begin{proof} By reflexive induction on $\alpha$.\end{proof}


\section{Slow provability}\label{slowprovability}
Suppose that the theory $\thr{S}$ is given by a uniform r.e.\ enumeration $\{{\thr{S}}_n\}_{n\in\omega}$. We can use any (partial) recursive 
function $\recfun{f}$ to ``slow down'' provability in $\thr{S}$, by considering the theory  
 \begin{equation}\label{Sf}
\thr{S}{\upharpoonright}_{\recfun{f}}:=\IDOE \cup  \bigcup \{\thr{S}_n\mid \conv{\recfun{f}(n)}\}.
\end{equation}
The reason for adding $\IDOE$ is to ensure that all our theories exhibit a minimal amount of nice behaviour (see Section \ref{ArTheories_Section}).
From the definition, it is clear that $\thr{S}{\upharpoonright}_{\recfun{f}}$ is a r.e.\ subtheory of $\thr{S}$.
If $\recfun{f}$ is total, then $\thr{S}{\upharpoonright}_{\recfun{f}}$ has exactly the same theorems as $\thr{S}$. However this fact may not be verifiable in a theory where $\recfun{f}$ is not provably total.

Since $\{{\thr{S}}_n\}_{n\in\omega}$ is uniformly r.e.,  there is 
an arithmetical formula $\Box_{\thr{S}_x}$, containing $x$ as a free variable, such that 
$\Box_{\thr{S}_n}$ 
is the provability predicate of $\thr{S}_n$. We assume that, verifiably in $\IDOE$, $\Box_{\thr{S}}\varphi$
is provably equivalent to
 $\exists x\, \Box_{\thr{S}_x}\varphi$. 
The provability predicate 
$\Box_{\thr{S}_x, \recfun{f}}$ of $\thr{S}{\upharpoonright}_{\recfun{f}}$ can be defined
in a natural way:

\begin{definition}\label{predseqf}
The provability predicate 
$\Box_{\thr{S}_x, \recfun{f}}$ of $\thr{S}{\upharpoonright}_{\recfun{f}}$ is defined as
\begin{equation*}
\Box_{\thr{S}_x, \recfun{f}}\,\varphi:= \Box_0\varphi \lor \exists y\leq x\, \left( \Box_{\thr{S}_y} \varphi \land \conv{\recfun{f}(x)}\right).
\end{equation*}
If $\{\thr{S}_n\}_{n\in\omega}$ is $\{\ISi{n}\}_{n\in\omega}$, we write $\Box_{\recfun{f}}$ instead of $\Box_{\thr{S}_x, \recfun{f}}$.
\end{definition}

If $\recfun{f}$ is total but not provably total in $\thr{T}$, then from the point of view of $\thr{T}$ the formula $\Box_{\thr{S}_x, \recfun{f}}$ is a \emph{nonstandard} provability predicate for $\thr{S}$. On the other hand, $\Box_{\thr{S}_x, \recfun{f}}$ is a \emph{standard} $\Sigma_1$-provability predicate for the r.e.\ theory $\thr{S}{\upharpoonright}_{\recfun{f}}$. It therefore satisfies the Hilbert-Bernays-L\"ob derivability conditions verifiably in $\IDOE$ (Section \ref{meta_IDOE}).

%

With Definition \ref{predseqf}, the usual provability predicate for $\PA$ can be written as 
$\Box_{\recfun{f}}$, where $\recfun{f}$ is any Kalmar elementary function (assuming $\IDOE$ as our metatheory). 
It is easy to see that for any $\recfun{f}$, $\Box_{\recfun{f}}\varphi$ is 
provably equivalent in $\IDOE$ to the formula: 
\begin{equation}
\exists x\, (\Box_x\varphi \land \conv{\recfun{f}(n)}).
\end{equation}
We recall the slow provability predicate studied in \cite{FriRatWei13}, defined as: 
\begin{equation}\label{frw}
\exists x\, \left(\Box_x \varphi \land \conv{\fast_{\varepsilon_0}(x)}\right).
\end{equation}
We define, for $z\in \mathbb{Z}$,
 \begin{equation}
 \fast_{\varepsilon_0}^{(z)}(x):= \fast_{\varepsilon_0}(x\dotminus z),
 \end{equation}
and consider the provability predicates $\Box_{\fast_{\varepsilon_0}^{(z)}}$. For the sake of readability, we let 
\begin{equation}
\BoxSl_z \, \varphi:=\Box_{\fast_{\varepsilon_0}^{(z)}}\varphi.
\end{equation}
Thus the provability predicate in (\ref{frw}) becomes 
$\BoxSl_0$.

\begin{remark}\label{shifting}
For any $z$, we can define a ``shifted" enumeration $\{\thr{T}_n^{z}\}_{n\in \omega}$ of $\PA$, such that  
$\BoxSl_z$ is provably equivalent to 
$\Box_{\thr{T}_{x}^{z}, \fast_{\varepsilon_0}}$, by simply defining $\thr{T}^{z}_x$ as $\ISi{x+z}$.
\end{remark}

%

In \cite{FriRatWei13}, Theorem \ref{slowcon} below is proven for
$\BoxSl_0$. 
In order to consider the more general case, we need one more definition.

\begin{definition}
We say that $\{{\thr{S}}_n\}_{n\in\omega}$ is a \emph{recursive sequence of finitely axiomatizable theories} if 
there is a recursive sequence $\{\thr{S}_n\mbox{-}\fofl{Ax}\}_{n\in\omega}$ such that for all $n$,
$\thr{S}_n$ is axiomatized by $\thr{S}_n\mbox{-}\fofl{Ax}$, and 
$\thr{S}_n\mbox{-}\fofl{Ax}$ is finite.
\end{definition}

\begin{theorem} \label{slowcon}  Suppose  $\thr{S}_n$ is a  recursive sequence of finitely axiomatizable theories such that $\PA$ proves that $\PA=\bigcup \limits_{n\in \omega} \thr{S}_n$. Let $\triangle$ denote the provability predicate $\Box_{\thr{S}_x, \fast_{\varepsilon_0}}$. Then
 $\PA\vdash \forall \varphi\left( \Box\triangle \varphi \to \Box \varphi\right)$. 
\end{theorem}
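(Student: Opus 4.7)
My plan is to reason inside $\PA$: fix $\varphi$ and assume $\Box\triangle\varphi$.

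First I would establish the easy half, that $\PA\vdash\triangle\varphi\to\Box\varphi$. Unfolding the definition, $\triangle\varphi$ is equivalent to $\Box_0\varphi\lor\exists x\exists y\leq x\,(\Box_{\thr{S}_y}\varphi\land\conv{\fast_{\varepsilon_0}(x)})$, so it implies $\Box_0\varphi\lor\exists y\,\Box_{\thr{S}_y}\varphi$. Both $\IDOE\subseteq\PA$ and each $\thr{S}_y\subseteq\PA$ are provable in $\PA$ from the hypothesis $\PA=\bigcup_n\thr{S}_n$, so each disjunct yields $\Box\varphi$. By necessitation, this gives $\Box\triangle\varphi\to\Box\Box\varphi$.

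The nontrivial part is then to improve $\Box\Box\varphi$ to $\Box\varphi$. Using the provable compactness furnished by the hypothesis (each $\thr{S}_n$ finitely axiomatizable, $\PA=\bigcup_n\thr{S}_n$ $\PA$-provably), the assumption $\Box\triangle\varphi$ gives $\exists n\,\Box_{\thr{S}_n}\triangle\varphi$, and it suffices to show $\PA\vdash\forall n\,(\Box_{\thr{S}_n}\triangle\varphi\to\Box\varphi)$. The naive invocation of uniform $\Sigma_1$-reflection for the family $\{\thr{S}_n\}$ is blocked: by swapping the quantifiers, $\forall n\forall \sigma\in\Sigma_1\text{-Sen}(\Box_{\thr{S}_n}\sigma\to\sigma)$ collapses to $\forall\sigma\in\Sigma_1\text{-Sen}(\Box\sigma\to\sigma)$, i.e.\ $\Sigma_1$-soundness of $\PA$ itself, which entails $\Con(\PA)$ and hence is not $\PA$-provable.

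The way forward, I expect, is to exploit the specific $\Sigma_1$-form of $\triangle\varphi$. The provably recursive functions of $\thr{S}_n$ are dominated by $\{\fast_\alpha:\alpha<\omega_n\}$ (Theorem~\ref{prtot}), so from a $\thr{S}_n$-proof of the $\Sigma_1$-sentence $\triangle\varphi$ one can witness-extract an explicit $\PA$-definable term $t(n,\varphi)$ bounding the outer existential. This replaces $\triangle\varphi$ inside $\thr{S}_n$ by the equivalent bounded disjunction $\Box_0\varphi\lor\bigvee_{y\leq t(n,\varphi)}\Box_{\thr{S}_y}\varphi$ (using that $\conv{\fast_{\varepsilon_0}(x)}$ is $\Sigma_1$-trivially provable for standard inputs and monotone-decreasing in $x$). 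One can then derive $\Box\varphi$ inside $\PA$ using only the $\Sigma_1$-completeness of $\IDOE$ together with the provable uniform containment $\thr{S}_y\subseteq\PA$, without ever invoking a uniform reflection schema that would collapse to $\Con(\PA)$. The main technical obstacle is performing this witness-extraction entirely inside $\PA$, uniformly in $n$, while staying clear of the forbidden uniform $\Sigma_1$-reflection.
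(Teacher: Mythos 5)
Your first half (deriving $\triangle\varphi\to\Box\varphi$ and hence $\Box\triangle\varphi\to\Box\Box\varphi$) is fine, and your diagnosis that uniform $\Sigma_1$-reflection over the family $\{\thr{S}_n\}$ is off-limits is exactly right. But the proposed repair does not close the gap, and the gap is precisely where all the content of the theorem lives. Even if you could extract, uniformly in $n$ inside $\PA$, a bound $t(n,\varphi)$ on the outer existential of $\triangle\varphi$, the resulting sentence $\Box_0\varphi\lor\bigvee_{y\leq t}\Box_{\thr{S}_y}\varphi$ is still $\Sigma_1$ (each disjunct $\Box_{\thr{S}_y}\varphi$ is an undecided $\Sigma_1$-sentence, not a decidable one), so you still only know that $\thr{S}_n$ \emph{proves} it. To then conclude $\Box\varphi$ via the containments $\thr{S}_y\subseteq\PA$ you must pass from provability in $\thr{S}_n$ to truth, i.e.\ invoke $\Sigma_1$-reflection for $\thr{S}_n$ with $n$ an internal variable --- exactly the principle you correctly ruled out; provable $\Sigma_1$-completeness of $\IDOE$ runs in the opposite direction and cannot substitute for it. Independently, the uniform witnessing step is itself not free: quantified over $n$, the totality of the bounding functions ($\forall n\,\forall\alpha<\omega_n\,\conv{\fast_\alpha}$) is equivalent to $\conv{\fast_{\varepsilon_0}}$, which $\PA$ does not prove, and formalizing Theorem~\ref{prtot} uniformly in $n$ is essentially as hard as the theorem you are trying to prove. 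A sanity check that no soft argument can work: instantiating $\varphi:=\bot$ the theorem says $\fofl{Con}[\PA]\to\fofl{Con}[{\PA+\fofl{Con}_{\fast_{\varepsilon_0}}}]$ provably in $\PA$, which is the main separation result of Friedman--Rathjen--Weiermann.

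What actually carries the proof (in \cite[Theorem~4.1]{FriRatWei13}, and in this paper's Lemma~\ref{epsilon_0_root_lower_bound} / Theorem~\ref{Epsilon_note_root_theorem}) is the contrapositive $\Diamond\psi\to\Diamond\triangleDual\psi$, established model-theoretically inside $\PA$: from a full model $\model{M}\models\PA+\psi$ one computes $u=\fast_{\varepsilon_0}(n)$ in $\model{M}$ and uses the cut-construction of Theorem~\ref{ISi_on_segment} to produce a cut $\model{J}<u$ satisfying $\ISi{n}$ together with $\dive{\fast_{\varepsilon_0}(n)}$ and $\Diamond_n\psi$, so that inside $\model{J}$ the slow theory collapses to a finite fragment and $\triangleDual\psi$ holds; a separate case handles the $1$-inconsistent situation where $\fast_{\varepsilon_0}$ is not total. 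Some such construction (or an equivalent ordinal-analytic argument) must replace your witness-extraction step.
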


\begin{proof} Essentially the same as  \cite[Theorem~4.1]{FriRatWei13}. See also Theorem \ref{Epsilon_note_root_theorem} in Section \ref{Models_Section} below. 
\end{proof}
It follows that, from the point of view of $\PA$, any provability predicate $\triangle$ as in the statement of Theorem \ref{slowcon} defines in fact a weaker theory than $\Box$:

\begin{corollary}\label{between}
Suppose  $\thr{S}_n$  is a  recursive sequence of finitely axiomatizable theories such that $\PA$ proves that $\PA=\bigcup \limits_{n\in \omega} \thr{S}_n$. Let $\triangle$ denote the provability predicate $\Box_{\thr{S}_x, \fast_{\varepsilon_0}}$. Then
$\PA \nvdash\Box \bot \to\triangle\bot $. 
\end{corollary}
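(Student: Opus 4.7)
The plan is to argue by contradiction, combining Theorem \ref{slowcon} with Löb's theorem. Suppose toward contradiction that $\PA \vdash \Box\bot \to \triangle\bot$. I will deduce $\PA \vdash \Box\bot$, which contradicts the (external) consistency of $\PA$ since $\Box\bot$ is a $\Sigma_1$-sentence and $\PA$ is $\Sigma_1$-sound.

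First I would apply necessitation (derivability condition \ref{hbl1}) to the assumed implication to get $\PA \vdash \Box(\Box\bot \to \triangle\bot)$, and then use derivability condition \ref{hbl2} to conclude
\begin{equation*}
\PA \vdash \Box\Box\bot \to \Box\triangle\bot.
\end{equation*}
Next, I would instantiate Theorem \ref{slowcon} at $\varphi = \bot$, which gives $\PA \vdash \Box\triangle\bot \to \Box\bot$. Chaining the two implications yields
\begin{equation*}
\PA \vdash \Box\Box\bot \to \Box\bot.
\end{equation*}

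Now I would apply Löb's theorem to the sentence $\Box\bot$: since $\PA \vdash \Box(\Box\bot) \to \Box\bot$ (using derivability condition \ref{hbl3} to convert $\Box\Box\bot$ appropriately, or just directly via the above), Löb's theorem gives $\PA \vdash \Box\bot$. This contradicts the consistency of $\PA$, since $\Box\bot$ is $\Sigma_1$ and false in the standard model.

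There is no real obstacle here; this is a short syntactic deduction once Theorem \ref{slowcon} is in hand. The only thing to be careful about is that the modal reasoning uses the Hilbert-Bernays-Löb derivability conditions for the ordinary predicate $\Box$, which hold verifiably in $\IDOE$ (Section \ref{meta_IDOE}), and that the final appeal to $\PA$'s consistency is an external meta-theoretic fact. In particular, no properties of $\triangle$ beyond the conclusion of Theorem \ref{slowcon} are needed.
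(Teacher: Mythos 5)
Your proof is correct and follows essentially the same strategy as the paper: combine Theorem \ref{slowcon} with L\"ob's theorem to extract $\PA\vdash\Box\bot$ from the assumed implication, then contradict the soundness of $\PA$. The only (harmless) difference is that you apply necessitation to the hypothesis and invoke L\"ob's theorem for $\Box$ at the sentence $\Box\bot$, whereas the paper uses $\triangle\varphi\to\Box\varphi$ to derive $\PA\vdash\triangle\triangle\bot\to\triangle\bot$ and invokes L\"ob's theorem for $\triangle$; your variant is marginally more self-contained since, as you note, it needs no property of $\triangle$ beyond the conclusion of Theorem \ref{slowcon}.
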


\begin{proof}
Suppose that $\PA\vdash\Box \bot \to\triangle\bot $. 
Since $\triangle \varphi $ implies $\Box\varphi$ for all $\varphi$, we have
\begin{equation}
\PA\vdash\triangle\triangle\bot \to \square\triangle\bot, 
\end{equation}
whence by Theorem \ref{slowcon}, 
\begin{equation}
\PA\vdash \triangle\triangle\bot  \to \Box \bot.
\end{equation}
Combining this with our assumption yields 
$\PA\vdash\triangle\triangle\bot \to\triangle\bot$. 
By L\"ob's Theorem for $\triangle$ (this follows from the Hilbert-Bernays-L\"ob derivability conditions for $\triangle$), we now have that 
$\PA\vdash\triangle\bot$ whence also $\PA\vdash \Box\bot$, contradiction.
\end{proof}

Theorem \ref{slowcon} holds for a rather wide class of provability predicates $\triangle$. In Section \ref{ProvLogic_Section} below, we determine the joint provability logic of any such $\triangle$ and ordinary $\PA$-provability. In contrast, the following sections provide examples of properties where the exact axiomatization of slow provability leads to a 
radical difference in the behaviour of the corresponding provability predicates. 
In particular, we show that 
\begin{equation}
\PA \vdash   \BoxSl_1^{\varepsilon_0}\,\varphi \leftrightarrow \Box \varphi, 
\end{equation}
whereas 
\begin{equation}
\PA \vdash   \BoxSl_{2}^{\omega}\,\varphi \leftrightarrow \Box \varphi, 
\end{equation}
where $\BoxSl_z^{\alpha}$ denotes the $\alpha$-iteration of $\BoxSl_z$ (see Section \ref{Transfinite_Section}). 
In Section \ref{Sqrt_Section}, we shall furthermore show that there is a function $\recfun{r}$ such that \begin{equation}
\PA \vdash \Box_{\recfun{r}}\Box_{\recfun{r}}\varphi \leftrightarrow \Box \varphi.
\end{equation}
Thus the slow provability predicates $ \BoxSl_1$, $\BoxSl_{2}$, and $\Box_{\recfun{r}}$ may be seen as the $\varepsilon_0$-root, the $\omega$-root, and the 
square root of ordinary provability, respectively.


\section{Provability implies iterated slow provability}\label{Converting_Section}
In the section we will show that in some cases provability of an arithmetical sentence in  $\PA$ implies provability of the same sentence with respect to certain transfinite iterations of some slow provability predicates.
\begin{lemma}[$\IDOE$]\label{Box_n_prov_conv} For all $n$ and $k$ we have $\Box_n\conv{\fast_{\omega_n}(k)}$.
\end{lemma}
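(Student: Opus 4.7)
The plan is to reason inside $\IDOE$, treating $n$ and $k$ as \emph{internal} free variables and deriving the single formula $\Box_n \conv{\fast_{\omega_n}(\dot{k})}$ rather than one instance per external pair $(n,k)$.

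The case $n = 0$ is trivial: $\omega_0 = 1$, $\fast_1(k) = 2k+1$ is elementary, and provable $\Sigma_1$-completeness of $\Box_0 = \Box_{\IDOE}$ immediately gives $\Box_0 \conv{\fast_1(\dot{k})}$ uniformly in $k$. For $n \geq 1$ I would first reduce using the fundamental-sequence clause of Definition \ref{fasthier}: since $\omega_n$ is a limit, the identity $\fast_{\omega_n}(k) = \fast_{\omega_n[k]}(k)$ is $\IDOE$-provable with $n,k$ free, and by the Hilbert--Bernays--L\"ob conditions for $\Box_n$ it is also $\IDOE$-provable inside $\Box_n$. Hence it suffices to establish
\[
\IDOE \vdash \forall n \geq 1\, \forall k\, \Box_n \conv{\fast_{\omega_n[\dot{k}]}(\dot{k})}.
\]
Since $\IDOE$ proves $\omega_n[k] < \omega_n$, this in turn follows from the uniform formalization of the easy direction of Theorem \ref{prtot}, namely
\[
\IDOE \vdash \forall n \geq 1\, \forall \alpha\, (\alpha < \omega_n \to \Box_n \conv{\fast_\alpha}),
\]
applied with $\alpha := \omega_n[\dot{k}]$.

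The main obstacle is establishing this displayed uniform statement. Theorem \ref{prtot} is stated externally and, as written, only gives an $\ISi{n}$-proof of $\conv{\fast_\alpha}$ for each fixed standard pair $(n,\alpha)$ with $\alpha < \omega_n$, whereas the plan requires its conclusion as a single formula provable in $\IDOE$ with $n$ and $\alpha$ bound. The standard route is to formalize the Solovay--Ketonen--Paris argument: exhibit a Kalmar-elementary function producing, from $(n,\alpha)$ with $\alpha < \omega_n$, an $\ISi{n}$-proof of $\conv{\fast_\alpha}$, assembled from the progressivity of $\conv{\fast_\alpha}$ recorded at (\ref{fast_prog}) together with a uniform-in-$n$ formalization of the fact that $\ISi{n}$ proves sufficient transfinite induction up to $\omega_n$. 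Since the resulting existence of a proof is a $\Sigma_1$-statement, $\IDOE$ verifies it, and the lemma follows. An alternative route avoiding explicit proof construction is reflexive induction (Lemma \ref{reflexive_induction}) on $\alpha$ within the scope of $\Box_n$, using Lemma \ref{fasthier_IDOE_facts} to handle the successor and limit steps; this version delivers the required quantification over $\alpha$ uniformly in $n$ inside $\IDOE$ itself.
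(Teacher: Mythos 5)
Your main route is essentially the paper's proof: reduce via $\fast_{\omega_n}(k)=\fast_{\omega_n[k]}(k)$ (with $\omega_n[k]=\omega^{k+1}_{n-1}$) to showing $\Box_n\conv{\fast_\alpha}$ uniformly for all $\alpha<\omega_n$, and obtain that from progressivity of $\conv{\fast_\alpha}$ plus enough transfinite induction inside $\ISi{n}$. What you leave as ``the standard route'' --- a uniform-in-$n$ verification that $\ISi{n}$ proves $\TI{\Pi_2}{\alpha}$ for all $\alpha<\omega_n$ --- is exactly the step the paper discharges concretely: since $\conv{\fast_\alpha}$ is $\Pi_2$ and progressive, $\Box_1\forall\alpha\le\varepsilon_0(\TI{\Pi_2}{\alpha}\to\conv{\fast_\alpha})$, and then Sommer's Theorem~4.1 (formalizable in $\IDOE$) gives $\Box_1(\TI{\Pi_n}{\omega}\to\TI{\Pi_2}{\alpha})$ for $\alpha<\omega_{n-2}^{\omega^\omega}=\omega_n$, where $\TI{\Pi_n}{\omega}$ is just $\Pi_n$-induction and hence available in $\ISi{n}$. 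So the outline is right but the one genuinely nontrivial ingredient is named rather than supplied.

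Your proposed alternative route, however, does not work. Reflexive induction (Lemma~\ref{reflexive_induction}) delivers the induction hypothesis only under an extra provability operator: to conclude $F(\alpha)$ you may assume $\Box_0\forall\beta\prec\alpha\,F(\beta)$, not $\forall\beta\prec\alpha\,F(\beta)$. With $F(\alpha):=(\alpha<\omega_n\to\Box_n\conv{\fast_\alpha})$, at a limit $\alpha$ you would need, \emph{inside} $\Box_n$, the fact $\conv{\fast_{\alpha[x]}}$ for all $x$; what the reflexive-induction hypothesis yields inside $\Box_n$ is only $\Box_n\conv{\fast_{\alpha[x]}}$, i.e.\ $\Box_n\Box_n\conv{\fast_{\alpha[x]}(x)}$, with no way to strip the inner box. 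Reflexive induction is the right tool when the number of nested boxes is allowed to grow with $\alpha$ --- that is precisely the shape of Lemma~\ref{fasthier_prov_conv_on_iter}, which proves $\Box_1^{\omega^\alpha}\conv{\fast_\alpha(n)}$ --- but it cannot produce a single fixed $\Box_n$ ranging over all $\alpha<\omega_n$. For that one genuinely needs transfinite induction to be available inside $\ISi{n}$ itself, which is why the appeal to Sommer's results is not optional here.
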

\begin{proof}
In the case of $n=0$ the statment of lemma holds because $\fast_{\omega_0}(x)=\fast_1(x)=2x+1$.  In the rest of the proof we consider the case of $n\ge 1$.

Note that we have $\Box_n(\conv{\fast_{\omega_n}(k)}\mathrel{\leftrightarrow}\conv{\fast_{\omega_{n-1}^{k+1}}(k)})$. Thus it is enough to show that for every $\alpha<\omega_n$  we have $\Box_n\conv{\fast_{\alpha}}$.  Since the theory $\ISi{1}$ proves that $\conv{\fast_{\alpha}}$ is a progressive formula (see Section \ref{transfinite_induction_section}) and $\conv{\fast_{\alpha}}$ is a $\Pi_2$-formula, we have $\Box_1\forall \alpha\le \varepsilon_0\,(\TI{\Pi_2}{\alpha}\to\conv{\fast_{\alpha}})$. Hence it is enough to show that for all $\alpha<\omega_n$ we have $\Box_n\TI{\Pi_2}{\alpha}$.

If $n=1$ then we only need to show that for any number $m$ we have $\Box_n\TI{\Pi_2}{m}$. Clearly, for every $m$ we have $\IDOE\vdash \TI{\Pi_2}{m}\to \TI{\Pi_2}{m+1}$. Thefore for any $m$ we have $\IDOE\vdash \TI{\Pi_2}{m}$ and hence $\Box_n\TI{\Pi_2}{m}$.  Thus we will consider only the case of $n\ge 2$.

  In \cite[Theorem~4.1]{Som95} it has been shown that if $0<m\le n$ and $\omega\le\alpha<\varepsilon_0$ then $\IDO$ proves that $\TI{\Pi_n}{\alpha}$ implies  $\TI{\Pi_m}{\beta}$ for all $\beta<\omega_{n-m}^{\alpha^{\omega}}$; a simple inspection of the proof shows that that the argument could be formalized in $\IDOE$. Thus for all $\alpha<\omega_n$ we have $\Box_1(\TI{\Pi_{n}}{\omega}\to\TI{\Pi_2}{\alpha})$. But $\TI{\Pi_{n}}{\omega}$ is just $\Pi_n$-induction for natural numbers and is well-known to be equivalent in $\IDO$  to $\Sigma_n$-induction \cite[Theorem~I.2.4]{HajPud98}. Therefore  for all $\alpha<\omega_n$ we have $\Box_n\TI{\Pi_2}{\alpha}$.
\end{proof}

\begin{corollary} \label{slowcon_inv} Suppose  $\thr{S}_n$ is a recursive sequence of finitely axiomatizable theories such that $\PA$ proves that $\PA=\bigcup \limits_{n\in \omega} \thr{S}_n$.  Let $\triangle$ denote the provability predicate $\Box_{\thr{S}_x, \fast_{\varepsilon_0}}$. Then
 $\PA\vdash \forall \varphi\left( \Box \varphi \to \Box\triangle \varphi \right)$. 
\end{corollary}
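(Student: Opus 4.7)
The plan is to trace an ordinary $\PA$-proof of $\varphi$ to a bounded level $\thr{S}_n$ of the axiomatization, and then to observe that $\PA$ itself proves the convergence statement needed to assemble a $\triangle$-proof from the $\thr{S}_n$-proof. Formally, I would reason inside $\PA$: fix $\varphi$ and assume $\Box\varphi$. The hypothesis $\PA=\bigcup_{n\in\omega}\thr{S}_n$, being provable in $\PA$, gives the equivalence $\Box\varphi \leftrightarrow \exists n\,\Box_{\thr{S}_n}\varphi$, so fix some $n$ with $\Box_{\thr{S}_n}\varphi$.

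Next I want $\Box\triangle\varphi$, where (unpacking Definition \ref{predseqf}, modulo existentially closing the free variable $x$) $\triangle\varphi$ is equivalent in $\IDOE$ to $\Box_0\varphi \lor \exists y\,(\Box_{\thr{S}_y}\varphi \land \conv{\fast_{\varepsilon_0}(y)})$; so it is enough to show $\Box\bigl(\Box_{\thr{S}_n}\varphi \land \conv{\fast_{\varepsilon_0}(n)}\bigr)$. The first conjunct under the box is handled by provable $\Sigma_1$-completeness: since $\Box_{\thr{S}_n}\varphi$ is a true $\Sigma_1$-statement, $\Box\Box_{\thr{S}_n}\varphi$ follows.

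The key step is the second conjunct: I claim $\Box\conv{\fast_{\varepsilon_0}(n)}$. By the definition of $\fast$ at a limit and of the fundamental sequence $\varepsilon_0[n]=\omega_{n+1}$, we have the $\IDOE$-provable identity $\fast_{\varepsilon_0}(n)=\fast_{\omega_{n+1}}(n)$. Lemma \ref{Box_n_prov_conv}, applied with $n+1$ in place of $n$ and with $k:=n$, gives $\Box_{n+1}\conv{\fast_{\omega_{n+1}}(n)}$; and since $\Box_{n+1}\psi\to\Box\psi$ holds uniformly in $n$, this yields $\Box\conv{\fast_{\varepsilon_0}(n)}$, as required. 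All of this is uniform in $n$, so the reasoning formalizes in $\PA$.

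Finally, combining the two via the necessitated $\mathsf{K}$-axiom gives $\Box\bigl(\Box_{\thr{S}_n}\varphi \land \conv{\fast_{\varepsilon_0}(n)}\bigr)$, and since $\IDOE$ proves the implication $\Box_{\thr{S}_n}\varphi \land \conv{\fast_{\varepsilon_0}(n)} \to \triangle\varphi$ (witnessed by $x=y=n$ in the defining existential of $\triangle$), we conclude $\Box\triangle\varphi$. The only nontrivial ingredient is Lemma \ref{Box_n_prov_conv}, which supplies the fact that $\ISi{n+1}$ carries enough transfinite induction to prove termination of $\fast_{\omega_{n+1}}$ on input $n$; everything else is routine Hilbert--Bernays--L\"ob manipulation together with unpacking Definition \ref{predseqf}.
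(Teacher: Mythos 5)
Your proof is correct and is essentially the argument the paper intends: the corollary is stated immediately after Lemma \ref{Box_n_prov_conv} precisely because, as you do, one extracts $\Box_{\thr{S}_n}\varphi$ for some $n$, obtains $\Box_{n+1}\conv{\fast_{\omega_{n+1}}(n)}=\Box_{n+1}\conv{\fast_{\varepsilon_0}(n)}$ from that lemma, and combines this with provable $\Sigma_1$-completeness to assemble $\Box\triangle\varphi$. The handling of the free variable $x$ in Definition \ref{predseqf} and the uniformity in $n$ are both treated correctly.
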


\begin{lemma}[$\ISi{1}$] \label{omega_upper_bound} Suppose $\varphi$ is an arithmetical sentence and we have $\Box \varphi$. Then there exists a number $n$ such that $\BoxSl^n_2\varphi$.
\end{lemma}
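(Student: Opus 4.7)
The plan is to argue inside $\ISi{1}$: from $\Box\varphi$ we extract some $k$ with $\Box_k\varphi$ and then derive $\BoxSl_2^{k\dotminus 1}\varphi$ by iterating a uniform one-step descent $k\dotminus 2$ times. The enabling ingredient is that Lemma \ref{Box_n_prov_conv} is formulated uniformly in both of its arguments.

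First I will establish, in $\ISi{1}$, the uniform one-step principle
\[\forall m\,\forall\chi\,\bigl(\Box_m\chi\to\Box_{m\dotminus 1}\BoxSl_2\chi\bigr).\]
Starting from $\Box_m\chi$, provable $\Sigma_1$-completeness applied to the $\Sigma_1$-formula $\Box_x\chi$ yields $\Box_0\Box_m\chi$ uniformly in $m,\chi$, and hence $\Box_{m\dotminus 1}\Box_m\chi$ because $\IDOE\subseteq\ISi{m\dotminus 1}$. The identity $\varepsilon_0[p]=\omega_{p+1}$ gives $\fast_{\varepsilon_0}(p)=\fast_{\omega_{p+1}}(p)$, so Lemma \ref{Box_n_prov_conv} with $n:=m\dotminus 1$ and $k:=m\dotminus 2$ produces $\Box_{m\dotminus 1}\conv{\fast_{\varepsilon_0}(m\dotminus 2)}$. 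Conjoining under $\Box_{m\dotminus 1}$ and taking $y:=m$ as an existential witness for the $\BoxSl_2$-clause delivers $\Box_{m\dotminus 1}\BoxSl_2\chi$.

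Next I iterate via $\Sigma_1$-induction. Fix $k\ge 2$ with $\Box_k\varphi$ and set $G(j):=\Box_{k\dotminus j}\BoxSl_2^j\varphi$. Since $\BoxSl_2$ is $\Sigma_1$ and iterations of a $\Sigma_1$-predicate along the Cantor notation system are again $\Sigma_1$ (Section \ref{Transfinite_Section}), the formula $G(j)$ is $\Sigma_1$ in $j$, so $\ISi{1}$-induction for it is available. The base $G(0)$ is the hypothesis; the step $G(j)\to G(j+1)$ is the one-step principle instantiated with $m:=k\dotminus j$ and $\chi:=\BoxSl_2^j\varphi$, combined with $\BoxSl_2\BoxSl_2^j\varphi\leftrightarrow\BoxSl_2^{j+1}\varphi$ (item \ref{iteration_facts_1_7} of Lemma \ref{iteration_facts_1}). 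Bounded $\Sigma_1$-induction up to $j=k\dotminus 2$ then yields $G(k\dotminus 2)=\Box_2\BoxSl_2^{k\dotminus 2}\varphi$. Finally, the trivial boundary $\ISi{1}\vdash\forall\chi\,(\Box_2\chi\to\BoxSl_2\chi)$ --- witnessed by $y:=2$ together with $\conv{\fast_{\varepsilon_0}(0)}$, which is provable in $\IDOE$ by $\Sigma_1$-completeness --- applied with $\chi:=\BoxSl_2^{k\dotminus 2}\varphi$ produces $\BoxSl_2^{k\dotminus 1}\varphi$, so $\exists n\,\BoxSl_2^n\varphi$. The cases $k\le 2$ reduce immediately to this same boundary fact via $\Box_k\varphi\to\Box_2\varphi$.

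The main obstacle will be maintaining strict uniformity throughout: Lemma \ref{Box_n_prov_conv} must be used as a genuine $\forall n\forall k$-statement (as its proof arranges), provable $\Sigma_1$-completeness must be applied uniformly in $m$ and $\chi$, and $G(j)$ must genuinely be $\Sigma_1$ in $j$ so that $\ISi{1}$'s $\Sigma_1$-induction concludes $G(k\dotminus 2)$ from the base and step. Without these uniformities one only recovers a schema ``for each standard $k$'' and loses the single $\ISi{1}$-proof of $\forall\varphi\,(\Box\varphi\to\exists n\,\BoxSl_2^n\varphi)$.
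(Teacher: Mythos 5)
Your proof is correct, and it rests on the same two pillars as the paper's own argument --- Lemma \ref{Box_n_prov_conv} together with provable $\Sigma_1$-completeness, both used uniformly --- but the induction is organized differently. The paper's invariant is $\BoxSl_2^m\conv{\fast_{\varepsilon_0}(m-1)}$: it builds iterated \emph{slow} provability of the convergence statements, keeps $\varphi$ out of the induction entirely, uses $\Sigma_1$-completeness of the iterated predicate $\BoxSl_2^m$ (item \ref{iteration_facts_1_5} of Lemma \ref{iteration_facts_1}) to import $\Box_{m+1}\conv{\fast_{\varepsilon_0}(m)}$ under $\BoxSl_2^m$, and only at the last step absorbs $\Box_n\varphi$ to conclude $\BoxSl_2^{n}\varphi$. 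Your invariant $\Box_{k\dotminus j}\BoxSl_2^{j}\varphi$ runs the descent in the opposite direction: at each stage one level of ordinary provability strength is traded for one slow box via the uniform one-step principle $\Box_m\chi\to\Box_{m\dotminus 1}\BoxSl_2\chi$, which needs only $\Sigma_1$-completeness of $\Box_0$. Both invariants are $\Sigma_1$ in the induction variable, so $\ISi{1}$ suffices either way, and your count $\BoxSl_2^{k\dotminus 1}\varphi$ versus the paper's $\BoxSl_2^{k}\varphi$ is immaterial for the statement. What each buys: your one-step lemma is the more reusable artifact --- it is essentially the same move as the first half of the proof of Lemma \ref{sqrt_PA_box_imp}, with $\fast_{\varepsilon_0}^{(2)}$ in place of $\recfun{r}$ and $m\dotminus 1$ in place of $\recfun{l}(n)$ --- while the paper's invariant isolates the $\varphi$-independent fact that the convergence statements are themselves iteratedly slow-provable. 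Your closing remarks identify the right obstruction: Lemma \ref{Box_n_prov_conv} must hold with $n$ and $k$ as internal variables, and the paper's proof of that lemma is arranged precisely so that it does.
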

\begin{proof} Because we have $\Box \varphi$ we also have $\Box_n \varphi$ for some $n$. If $n< 2$ then because $\conv{\fast_{\varepsilon_0}(1)}$, we have $\BoxSl^1_2\varphi$. Thus it remains to consider the case of $n\ge 2$. 

Now we prove by induction on $m>0$  that $\BoxSl_2^m\conv{\fast_{\varepsilon_0}(m-1)}$. 
 We have $\conv{\fast_{\varepsilon_0}(0)}$ and by $\Sigma_1$-completeness of $\BoxSl_2$ we have $\BoxSl_2\conv{\fast_{\varepsilon_0}(0)}$. Thus induction base holds. We recall that  $\BoxSl_{2}^m$ satisfies Hilbert-Bernays-Löb conditions. From $\BoxSl_{2}^m\conv{\fast_{\varepsilon_0}(m-1)}$  it follows that $\BoxSl_{2}^m \forall \psi (\Box_{m+1}\psi\to \BoxSl_2\psi)$. From Lemma \ref{Box_n_prov_conv}  we have $\Box_{m+1}\conv{\fast_{\omega_{m+1}}(m)}$, hence we have $\Box_{m+1}\conv{\fast_{\varepsilon_0}(m)}$, and thus by $\Sigma_1$-completeness of $\BoxSl_2^m$ we have $\BoxSl_2^m\Box_{m+1}\conv{\fast_{\varepsilon_0}(m)}$.  Therefore we have $\BoxSl_2^m(\forall\psi (\Box_{m+1}\psi\to \BoxSl_2\psi)\land \Box_{m}\conv{\fast_{\varepsilon_0}(m)})$ and hence $\BoxSl_2^{m}\BoxSl_2\conv{\fast_{\varepsilon_0}(m)}$. Therefore the induction step holds.

Hence we have $\BoxSl_2^{n-1}\conv{\fast_{\varepsilon_0}(n-2)}$. Thus we have $\BoxSl_2^{n-1}\forall \psi (\Box_{n}\psi\to \BoxSl_2\psi)$. From $\Box_n\varphi$ it follows that $\BoxSl_2^{n-1}\Box_n\varphi$. Therefore we have $\BoxSl_2^{n}\varphi$.
\end{proof}

Combining Theorem \ref{slowcon} and Lemma \ref{omega_upper_bound} we conclude that the following theorem holds:
\begin{theorem}[$\PA$]\label{omegait}
Suppose $\varphi$ is an arithmetical sentence. Then $\Box\varphi$ iff $\BoxSl_2^{\omega}\varphi$.
\end{theorem}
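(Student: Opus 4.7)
The plan is to use Lemma \ref{omega_upper_bound} and Theorem \ref{slowcon} to handle the two directions separately, with the glue being the limit-stage unfolding $\BoxSl_2^\omega\varphi \leftrightarrow \exists n\, (0 < n < \omega \land \BoxSl_2^n\varphi)$ supplied by Lemma \ref{iteration_facts_1}(\ref{iteration_facts_1_8}). All reasoning happens inside $\PA$.

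The direction $\Box\varphi \to \BoxSl_2^\omega\varphi$ is essentially Lemma \ref{omega_upper_bound} followed by the unfolding: the lemma produces a positive $n$ with $\BoxSl_2^n\varphi$, and the equivalence then converts this witness into $\BoxSl_2^\omega\varphi$.

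For $\BoxSl_2^\omega\varphi \to \Box\varphi$, the same unfolding reduces the goal to establishing $\PA \vdash \forall n \geq 1\, \forall \psi\, (\BoxSl_2^n\psi \to \Box\psi)$. I would prove the slightly stronger auxiliary claim $\PA \vdash \forall n\, \forall \psi\, (\Box\BoxSl_2^n\psi \to \Box\psi)$ by internal induction on $n$. The base case $n = 0$ is immediate since $\BoxSl_2^0\psi = \psi$ by convention. For the step, use Lemma \ref{iteration_facts_1}(\ref{iteration_facts_1_7}) to rewrite $\BoxSl_2^{n+1}\psi$ as $\BoxSl_2\BoxSl_2^n\psi$, then instantiate Theorem \ref{slowcon} --- which applies to $\BoxSl_2$ via the finitely axiomatizable presentation $\thr{S}_k := \ISi{k+2}$ from Remark \ref{shifting} --- with the sentence $\BoxSl_2^n\psi$ to obtain $\Box\BoxSl_2^{n+1}\psi \to \Box\BoxSl_2^n\psi$, and chain this with the induction hypothesis to get $\Box\psi$. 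The stated implication now follows because $\BoxSl_2^n\varphi$ is $\Sigma_1$, so provable $\Sigma_1$-completeness yields $\BoxSl_2^n\varphi \to \Box\BoxSl_2^n\varphi$, and the auxiliary claim closes the gap.

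The main point requiring care is that the internal induction be set up uniformly in $\psi$, so that at each step Theorem \ref{slowcon} can be reapplied to the iterated sentence $\BoxSl_2^n\psi$. Since Theorem \ref{slowcon} is already stated with $\varphi$ universally quantified, this poses no real difficulty, and the remainder is routine manipulation with the provably verified Löb conditions and the unfolding equivalences of Lemma \ref{iteration_facts_1}.
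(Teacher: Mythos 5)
Your proposal is correct and follows exactly the paper's route: the paper proves Theorem \ref{omegait} by the one-line remark ``Combining Theorem \ref{slowcon} and Lemma \ref{omega_upper_bound},'' and your argument is precisely that combination, with the limit-stage unfolding and the internal induction on $n$ (iterating Theorem \ref{slowcon} via $\Box\BoxSl_2^{n+1}\psi \to \Box\BoxSl_2^{n}\psi$) spelling out the details the paper leaves implicit. The appeal to Remark \ref{shifting} to see that $\BoxSl_2$ falls under the hypotheses of Theorem \ref{slowcon}, and to provable $\Sigma_1$-completeness to pass from $\BoxSl_2^n\varphi$ to $\Box\BoxSl_2^n\varphi$, are both exactly the right glue.
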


We clearly have the following generalization:

\begin{theorem} \label{omega_root_theorem}Suppose  $\thr{S}_n$ is a recursive sequence of finitely axiomatizable theories such that $\PA$ proves that $\PA=\bigcup \limits_{n\in \omega} \thr{S}_n$  and $\ISi{n + 2}\subseteq \thr{S}_n$. Let $\triangle$ denote the provability predicate $\Box_{\thr{S}_x, \fast_{\varepsilon_0}}$.  Then $\PA$ proves that for every arithmetical sentence $\varphi$ we have $\Box\varphi$ iff $\triangle^{\omega}\varphi$.
\end{theorem}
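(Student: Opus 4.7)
The plan is to adapt the proof of Theorem \ref{omegait} (which combined Theorem \ref{slowcon} with Lemma \ref{omega_upper_bound}) to the more general enumeration $\{\thr{S}_n\}$. The reverse direction $\triangle^\omega\varphi \to \Box\varphi$ uses only Theorem \ref{slowcon} together with Lemmas \ref{iteration_facts_1} and \ref{iterations_and_addition}, and does not depend on the hypothesis $\ISi{n+2}\subseteq\thr{S}_n$; that hypothesis enters only in the forward direction.

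For $\triangle^\omega\varphi \to \Box\varphi$, I would first unfold $\triangle^\omega\varphi$ to $\exists n>0\,\triangle^n\varphi$ using Lemma \ref{iteration_facts_1}, and then prove $\forall n>0\,\forall\varphi\,(\triangle^n\varphi \to \Box\varphi)$ by induction on $n$ inside $\PA$. The base case $n=1$ is $\triangle\varphi \to \Box\varphi$, which is immediate from Definition \ref{predseqf} together with the $\PA$-provable fact that each $\thr{S}_y$ is a subtheory of $\PA$. For the inductive step I would rewrite $\triangle^{n+1}\varphi$ as $\triangle^n\triangle\varphi$ via Lemma \ref{iterations_and_addition}; instantiating the induction hypothesis at the sentence $\triangle\varphi$ yields $\Box\triangle\varphi$, and Theorem \ref{slowcon} then delivers $\Box\varphi$.

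For $\Box\varphi \to \triangle^\omega\varphi$, I would mirror Lemma \ref{omega_upper_bound}. Given $\Box\varphi$ in $\PA$, pick $n$ with $\Box_n\varphi$, which we may assume satisfies $n\geq 2$. The core subclaim is $\triangle^m\conv{\fast_{\varepsilon_0}(m-1)}$ for all $m>0$, proved by external induction on $m$. The base uses $\Sigma_1$-completeness of $\triangle$. For the step, note that from $\conv{\fast_{\varepsilon_0}(m-1)}$ together with the inclusion $\ISi{m+1}=\ISi{(m-1)+2}\subseteq \thr{S}_{m-1}$ one obtains, provably in $\IDOE$, the implication $\forall\psi\,(\Box_{m+1}\psi \to \triangle\psi)$. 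Pushing this under $\triangle^m$ via the induction hypothesis, and combining with $\triangle^m\Box_{m+1}\conv{\fast_{\varepsilon_0}(m)}$ obtained from Lemma \ref{Box_n_prov_conv} together with the identity $\fast_{\varepsilon_0}(m)=\fast_{\omega_{m+1}}(m)$ and $\Sigma_1$-completeness of $\triangle^m$, yields $\triangle^{m+1}\conv{\fast_{\varepsilon_0}(m)}$. Applying the subclaim at $m=n-1$ and combining with $\triangle^{n-1}\Box_n\varphi$ (again by $\Sigma_1$-completeness) gives $\triangle^{n-1}\triangle\varphi=\triangle^n\varphi$, hence $\triangle^\omega\varphi$.

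The step I expect to require the most care is the uniform-in-$n$ formalization of the inclusion $\ISi{n+2}\subseteq \thr{S}_n$. We need an $\IDOE$-verifiable translation, elementary in $n$, of proofs in $\ISi{n+2}$ into proofs in $\thr{S}_n$, so that the implication $\Box_{m+1}\psi \to \triangle\psi$ can be invoked inside $\triangle^m$ with $m$ varying. This is where the assumption that $\{\thr{S}_n\}$ is a recursive sequence of \emph{finitely axiomatizable} theories is essential, ensuring that each axiom translation has bounded complexity.
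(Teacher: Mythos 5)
Your proposal follows exactly the route the paper intends: Theorem \ref{omega_root_theorem} is presented there without a separate proof, as the evident generalization of Theorem \ref{omegait}, i.e.\ of the combination of Theorem \ref{slowcon} (giving $\triangle^{\omega}\varphi\to\Box\varphi$ via the induction $\triangle^{n+1}\varphi\to\triangle^{n}\triangle\varphi\to\Box\triangle\varphi\to\Box\varphi$) with the argument of Lemma \ref{omega_upper_bound} (giving $\Box\varphi\to\triangle^{\omega}\varphi$), and both halves of your argument reproduce that structure faithfully, with the hypothesis $\ISi{n+2}\subseteq\thr{S}_n$ playing precisely the role that the index shift plays for $\BoxSl_2$.

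One point needs correcting. The subclaim $\triangle^{m}\conv{\fast_{\varepsilon_0}(m-1)}$ must be proved by \emph{internal} induction on $m$ (inside the theory in which you are reasoning), not by external induction as you state: the index $n$ with $\Box_n\varphi$ is obtained by existential instantiation inside $\PA$ and may be nonstandard from outside, so an external induction would only yield the schema of instances at standard numerals and would not license the instantiation at $m=n-1$. The repair is immediate -- the formula $\triangle^{m}\conv{\fast_{\varepsilon_0}(m-1)}$ is $\Sigma_1$, so $\Sigma_1$-induction suffices, which is exactly how the paper's Lemma \ref{omega_upper_bound} (a lemma of $\ISi{1}$) runs the corresponding induction. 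Your closing concern about uniform verifiability of $\ISi{n+2}\subseteq\thr{S}_n$ is well placed and resolvable as you suspect: since both theories are finitely axiomatized, the inclusion is a single $\Sigma_1$-statement $\Box_{\thr{S}_n}(A_{n+2})$ with $A_{n+2}$ the axiom of $\ISi{n+2}$, and provable $\Sigma_1$-completeness of $\triangle^{m}$ lets you import it under $\triangle^{m}$ even if it is only available in the outer context rather than in $\IDOE$.
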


\begin{lemma}[$\ISi{1}$] \label{fasthier_prov_conv_on_iter}Suppose $\alpha<\varepsilon_0$ and $n$ is a number. Then $\Box^{\omega^{\alpha}}_1\conv{\fast_{\alpha}(n)}$.
\end{lemma}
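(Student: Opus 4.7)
The approach is reflexive induction (Lemma \ref{reflexive_induction}) on $\alpha$ along the Cantor ordinal notation system below $\varepsilon_0$, applied to $F(\alpha) := \forall n\, \Box_1^{\omega^\alpha}\conv{\fast_\alpha(n)}$. Working in $\ISi{1}$, I fix $\alpha<\varepsilon_0$ and assume the reflexive hypothesis $\Box_1\forall \beta<\alpha\, F(\beta)$; to derive $F(\alpha)$ I split on whether $\alpha$ is zero, a successor, or a limit, which is an $\IDOE$-verifiable trichotomy for Cantor notations.

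The base and limit cases are short. For $\alpha = 0$, $\conv{\fast_0(n)}$ is $\conv{n+1}$, which is trivially true, so the claim follows by $\Sigma_1$-completeness. For $\alpha$ a limit, given $n$, I instantiate the reflexive hypothesis inside $\Box_1$ at $\beta := \alpha[n]$ and $m := n$, obtaining $\Box_1\Box_1^{\omega^{\alpha[n]}}\conv{\fast_{\alpha[n]}(n)}$. Since $\omega^{\alpha[n]}<\omega^\alpha$ with $\omega^\alpha$ a limit, the iteration definition for limits (Lemma \ref{iteration_facts_1} item \ref{iteration_facts_1_8}) lifts this to $\Box_1^{\omega^\alpha}\conv{\fast_{\alpha[n]}(n)}$; combined with $\fast_\alpha(n)=\fast_{\alpha[n]}(n)$ this yields $\Box_1^{\omega^\alpha}\conv{\fast_\alpha(n)}$.

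The successor case $\alpha = \gamma+1$ is the main step. The reflexive hypothesis provides $\Box_1 P$, where $P := \forall m\,\Box_1^{\omega^\gamma}\conv{\fast_\gamma(m)}$, and by monotonicity of iterations (Lemma \ref{iteration_facts_1} item \ref{iteration_facts_1_2}) also $\Box_1^\beta P$ for every $\beta\ge 1$. Given $n$, I prove by $\Sigma_1$-induction on $k$ the $\Sigma_1$-formula $\Psi(k) := \Box_1^{\omega^\gamma\cdot(k+1)}\conv{\fast_\gamma^k(n)}$. The base $k=0$ amounts to $\Box_1^{\omega^\gamma}(\exists y\, y=n)$, which is immediate. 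For the step, I reason inside $\Box_1^{\omega^\gamma\cdot(k+1)}$, where both $\conv{\fast_\gamma^k(n)}$ (from $\Psi(k)$) and $P$ (from $\Box_1^{\omega^\gamma\cdot(k+1)} P$) are available: taking $M$ to be the witness with $M=\fast_\gamma^k(n)$ and applying $P$ at $m := M$ yields $\Box_1^{\omega^\gamma}\conv{\fast_\gamma(M)}$, which is $\IDOE$-provably $\Box_1^{\omega^\gamma}\conv{\fast_\gamma^{k+1}(n)}$. Hence $\Box_1^{\omega^\gamma\cdot(k+1)}\Box_1^{\omega^\gamma}\conv{\fast_\gamma^{k+1}(n)}$, and Lemma \ref{iterations_and_addition} (using $\omega^\gamma+\omega^\gamma\cdot(k+1)=\omega^\gamma\cdot(k+2)$) delivers $\Psi(k+1)$. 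Instantiating at $k=n+1$ gives $\Box_1^{\omega^\gamma\cdot(n+2)}\conv{\fast_{\gamma+1}(n)}$, and since $\omega^\gamma\cdot(n+2)<\omega^{\gamma+1}$, monotonicity yields $\Box_1^{\omega^{\gamma+1}}\conv{\fast_{\gamma+1}(n)}$.

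The main obstacle is the successor step. Because reflexive induction supplies $\Box_1 P$ rather than $P$ itself, I cannot instantiate $P$ directly in the outer context and must route the use of $P$ through monotonicity of iterations to make it available inside each deeper iteration box $\Box_1^{\omega^\gamma\cdot(k+1)}$; this is what forces the inner induction to start one iteration below (at $\fast_\gamma^0(n)=n$) and to terminate at $k=n+1$ rather than $k=n$. The whole step then hinges on the ordinal identity exploited by Lemma \ref{iterations_and_addition} to collapse two nested iterations into one, and on careful bookkeeping of the witness $M$ (eliminated via $\Sigma_1$-completeness) when transferring $\conv{\fast_\gamma(M)}$ to $\conv{\fast_\gamma^{k+1}(n)}$ under $\Box_1^{\omega^\gamma}$.
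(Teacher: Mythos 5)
Your proposal is correct and follows essentially the same route as the paper: reflexive induction on $\alpha$ with the zero/limit/successor trichotomy, the limit case handled via the defining clause of the iteration, and the successor case by an inner $\Sigma_1$-induction on $k$ that collapses nested iterations through Lemma \ref{iterations_and_addition}. The only differences are cosmetic --- you keep $n$ fixed and carry one extra $\omega^\gamma$ in the iteration index instead of proving the paper's uniform statement $\Box_1\forall x\,\Box_1^{\omega^\gamma\cdot k}\conv{\fast_\gamma^k(x)}$ under an outer $\Box_1$, and you compose as $\fast_\gamma\circ\fast_\gamma^k$ rather than $\fast_\gamma^k\circ\fast_\gamma$ --- and your handling of the witness $M$ via provable $\Sigma_1$-completeness matches the paper's treatment.
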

\begin{proof} We use reflexive induction and hence it is sufficient to show (while reasoning in $\ISi{1}$) that  for every $\alpha\le\varepsilon_0$, if $\Box_1\forall \beta<\alpha\, \forall x \, \Box_1^{\omega^\beta}\conv{\fast_{\beta}(x)}$ then for every $m$ we have $\Box_1^{\omega^\alpha}\conv{\fast_{\alpha}(m)}$.

We reason in $\ISi{1}$. Let us consider three cases: $\alpha=0$, $\alpha$ is a limit ordinal, and $\alpha$ is a successor ordinal. In the first case we need to show that $\Box_1\conv{\fast_0(n)}$ which is clearly true. In the second case $\Box_1^{\omega^{\alpha}}\conv{\fast_\alpha(m)}$ is equivalent to $\Box_1^{\omega^{\alpha}}\conv{\fast_{\alpha[m]}(m)}$. The latter follows from $\Box_1\Box_1^{\omega^{\alpha[m]}}\conv{\fast_{\alpha[m]}(m)}$ which itself follows from $\Box_1\forall \beta<\alpha \forall x \Box_1^{\omega^\beta}\conv{\fast_{\beta}(x)}$. 

Now let us consider the case of successor $\alpha=\gamma+1$. In order to show that $\Box_1^{\omega^{\gamma+1}}\conv{\fast_{\gamma+1}(m)}$ it is enough to show that $\Box_1\Box_1^{\omega^{\gamma}\cdot(m+1)}\conv{\fast_{\gamma}^{m+1}(m)}$. In order to prove latter we prove by induction on $k\ge 1$ that  $\Box_1\forall x \Box_1^{\omega^{\gamma}\cdot k}\conv{\fast_{\gamma}^{k}(x)}$ . The base of induction follows directly from reflexive induction assumption.  Suppose we have  $\Box_1\forall x \Box_1^{\omega^{\gamma}\cdot k}\conv{\fast_{\gamma}^{k}(x)}$. Then we have $\Box_1\Box_1^{\omega^{\gamma}}\forall x \Box_1^{\omega^{\gamma}\cdot k}\conv{\fast_{\gamma}^{k}(x)}$. Using reflexive induction assumption we obtain $$\Box_1(\,\forall x \Box_1^{\omega^{\gamma}}\conv{\fast_{\gamma}(x)} \,\,\, \land \,\,\, \Box_1^{\omega^{\gamma}}\forall x \Box_1^{\omega^{\gamma}\cdot k}\conv{\fast_{\gamma}^{k}(x)}\,).$$ Thus we have $$\Box_1\forall x \Box_1^{\omega^{\gamma}}(\, \conv{\fast_{\gamma}(x)} \,\,\, \land \,\,\, \forall y \Box_1^{\omega^{\gamma}\cdot k}\conv{\fast_{\gamma}^{k}(y)}\,).$$ Hence we have $$\Box_1\forall x \Box_1^{\omega^{\gamma}}(\exists y ( y=\fast_{\gamma}(x) \land  \Box_1^{\omega^{\gamma}\cdot k}\conv{\fast_{\gamma}^{k}(y)})).$$ Therefore $$\Box_1\forall x \Box_1^{\omega^{\gamma}}\Box_1^{\omega^{\gamma}\cdot k}\conv{\fast_{\gamma}^{k+1}(x)}.$$ This finishes the proof in the successor case.
\end{proof}

\begin{lemma} \label{epsilon_0_upper_bound} Suppose  $\thr{S}_n$ is a recursive sequence of finitely axiomatizable theories such that $\PA$ proves that $\PA=\bigcup \limits_{n\in \omega} \thr{S}_n$. Let $\triangle$ denote the provability predicate $\Box_{\thr{S}_x, \fast_{\varepsilon_0}}$. Then $\PA$ proves that for every arithmetical sentence $\varphi$ such that $\Box\varphi$ we have $\triangle^{\varepsilon_0}\varphi$. 
\end{lemma}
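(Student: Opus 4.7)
The strategy is to convert the $\Box_1$-iteration bound from Lemma~\ref{fasthier_prov_conv_on_iter} into a $\triangle$-iteration bound and then absorb a $\Sigma_1$ witness for $\Box\varphi$. The key intermediate claim is
\begin{equation}\label{intermed_plan}
\PA \vdash \forall n\, \triangle^{\omega_{n+2}}\conv{\fast_{\varepsilon_0}(n)}.
\end{equation}

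To establish~\eqref{intermed_plan}, I first show $\PA\vdash\forall\psi\,(\Box_1\psi\to\triangle\psi)$. Since $\ISi{1}$ is finitely axiomatizable and $\PA=\bigcup_n\thr{S}_n$ provably in $\PA$, fix $n_0$ with $\ISi{1}\subseteq\thr{S}_{n_0}$; the sentence $\conv{\fast_{\varepsilon_0}(\overline{n_0})}$ is a concrete true $\Sigma_1$-sentence and hence $\PA$-provable, and combined with $\Box_1\psi\to\Box_{\thr{S}_{n_0}}\psi$ it yields $\triangle\psi$ by witnessing $x=y=n_0$ in Definition~\ref{predseqf}. Next, I instantiate Lemma~\ref{fasthier_prov_conv_on_iter} with $\alpha=\omega_{n+1}$; using $\varepsilon_0[n]=\omega_{n+1}$ (so that $\conv{\fast_{\omega_{n+1}}(n)}$ and $\conv{\fast_{\varepsilon_0}(n)}$ are $\IDOE$-equivalent) together with the ordinal identity $\omega^{\omega_{n+1}}=\omega_{n+2}$, I obtain $\ISi{1}\vdash\forall n\,\Box_1^{\omega_{n+2}}\conv{\fast_{\varepsilon_0}(n)}$. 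Monotonicity of iterations (Lemma~\ref{iterations_monotonicity_for_predicates}) applied to the preliminary implication then gives~\eqref{intermed_plan}.

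With~\eqref{intermed_plan} in hand, I conclude as follows, arguing inside $\PA$. From $\Box\varphi$ and $\PA=\bigcup_n\thr{S}_n$ pick $n$ with $\Box_{\thr{S}_n}\varphi$; since $\Box_{\thr{S}_n}\varphi$ is $\Sigma_1$, $\Sigma_1$-completeness of $\triangle^{\omega_{n+2}}$ (Lemma~\ref{iteration_facts_1}(\ref{iteration_facts_1_5})) gives $\triangle^{\omega_{n+2}}\Box_{\thr{S}_n}\varphi$. Combining this with~\eqref{intermed_plan} via item~\ref{iteration_facts_1_4} of Lemma~\ref{iteration_facts_1} (closure under modus ponens) and the $\IDOE$-provable implication $\Box_{\thr{S}_n}\varphi\land\conv{\fast_{\varepsilon_0}(n)}\to\triangle\varphi$ (read off Definition~\ref{predseqf} with $x=y=n$) yields $\triangle^{\omega_{n+2}}\triangle\varphi$. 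Lemma~\ref{iterations_and_addition} with $\alpha=\omega_{n+2}$, $\beta=1$ rewrites this as $\triangle^{1+\omega_{n+2}}\varphi=\triangle^{\omega_{n+2}}\varphi$, and $\omega_{n+2}\prec\varepsilon_0$ together with item~\ref{iteration_facts_1_3} of Lemma~\ref{iteration_facts_1} finally gives $\triangle^{\varepsilon_0}\varphi$.

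The analytic work has already been done inside Lemma~\ref{fasthier_prov_conv_on_iter}; what remains is bookkeeping. The subtlest step I anticipate is the preliminary $\PA\vdash\forall\psi\,(\Box_1\psi\to\triangle\psi)$: because $\PA\nvdash\conv{\fast_{\varepsilon_0}}$, one cannot invoke totality of $\fast_{\varepsilon_0}$ uniformly and must instead lean on $\PA$ proving each particular true $\Sigma_1$-instance $\conv{\fast_{\varepsilon_0}(\overline{n_0})}$ for the specific numeral $n_0$ picked once and for all.
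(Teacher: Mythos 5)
Your proposal is correct and follows essentially the same route as the paper's proof: both derive $\Box_1^{\omega_{n+2}}\conv{\fast_{\varepsilon_0}(n)}$ from Lemma~\ref{fasthier_prov_conv_on_iter}, transfer it to $\triangle$-iterations via $\Box_1\psi\to\triangle\psi$ and Lemma~\ref{iterations_monotonicity_for_predicates}, absorb the $\Sigma_1$-witness $\Box_{\thr{S}_n}\varphi$ to get an iterated $\triangle\triangle\varphi$, and collapse with Lemma~\ref{iterations_and_addition}. The only (immaterial) difference is that you apply the ordinal-monotonicity step $\omega_{n+2}\prec\varepsilon_0$ at the end rather than before absorbing the last $\triangle$.
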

\begin{proof}
Let us reason in $\PA$. For some $n$ we have $\Box_{\thr{S}_n}\varphi$. From Lemma \ref{fasthier_prov_conv_on_iter} it follows that we have $\Box_1^{\omega_{n+2}}\conv{\fast_{\varepsilon_0}(n)}$. Since for some $n$ the theory $\ISi{1}\subseteq  \thr{S}_n$, in $\PA$  the predicate $\triangle$ is at least as strong as $\Box$. Therefore by Lemma \ref{iteration_facts_1} and Lemma \ref{iterations_monotonicity_for_predicates} we have $\triangle^{\varepsilon_0}\conv{\fast_{\varepsilon_0}(n)}$. Hence we have $\triangle^{\varepsilon_0}(\conv{\fast_{\varepsilon_0}(n)}\land\Box_{\thr{S}_n}\varphi)$. Thus $\triangle^{\varepsilon_0}\triangle\varphi$. Finally,  by Lemma \ref{iterations_and_addition} we have $\triangle^{\varepsilon_0}\varphi$. 
\end{proof}


\section{Models for slow consistency}\label{Models_Section}
In this section we will show that for the slow provability predicate $\BoxSl_1$, the theory $\PA$ proves that $\BoxSl_1^{\varepsilon_0}$ is equivalent to $\Box$.   We also show that   in addition to $\BoxSl_1$ a large family of provability predicates has the same property.



We will use model-theoretic techniques while reasoning in $\PA$.  We briefly present basic definitions of formalization of model theory within arithmetic. A \emph{model} of finite signature   is a tuple of formulas that give the domain, as well as interpretations of the constant, functional, and predicate symbols.  A \emph{full} model is a model with a satisfaction relation for all first-order sentences of the signature of the model given by their Gödel numbers. Note that for a model without satisfaction relation there is no straightforward way to formalize in $\PA$ whether the model satisfies some infinite set of axioms, hence when we will talk about models of $\PA$ within $\PA$, we will assume that the models are full models. We will formalize our model-theoretic arguments within $\PA$ and since full induction schema is present, our arguments will not depend on the complexity of formulas giving models. We also recall that Gödel Completeness theorem is formalizable in $\PA$, i.e. in $\PA$ the consistency of a theory $\thr{T}$ implies the existence of a full model of $\thr{T}$. Also, in $\PA$ the existence of some full model for a theory $\thr{T}$ implies the consistency of $\thr{T}$.  

For models of arithmetical theories  we also consider partial satisfaction relation that are defined only for $\Pi_n$ (or equivalently, $\Sigma_n$) formulas. Note that if we fix a number $k$ externally, then in $\PA$ we can construct a $\Pi_{n+k}$ partial satisfaction relation from a $\Pi_n$ partial satisfaction relation. Also note that since $\PA$ proves the Cut Elimination Theorem, it also proves that if  $\thr{T}$ is an arithmetical theory axiomatizable by a recursive set of $\Pi_n$-formulas and there is a model of $\thr{T}$ with a $\Pi_n$ partial satisfaction relation, then $\thr{T}$  is consistent.

For a proper presentation of the basic definitions and the proofs of basic theorems of model theory in formal arithmetic the reader is referred to \cite[I.4(b)]{HajPud98}.

Below we work with non-standard models of arithmetical signature, all of them will be models of $\IDOE$. Because standard natural numbers are embeddable as initial segments in every model of $\IDOE$ we freely assume that every such model contains the standard numbers. 

Suppose that $\model{M}$ is a model of $\IDOE$. A cut $\model{I}$ of $\model{M}$ is a submodel of $\model{M}$ such that for any $a\in\model{M}$ and $b\in \model{J}$, if $\model{M}\models a<b$,  then we have $a\in\model{I}$. For every cut $\model{I}$ and an element $a\in\model{M}$ we write $\model{I}<a$ if $\forall b\in\model{I}\, (\model{M}\models b<a)$ and we write $a<\model{I}$ if $a\in\model{I}$. 

\begin{theorem}\cite[Theorem~5.25]{Som95}] \label{TI_on_cut}Suppose $n\ge 1$ is a number, $\alpha<\varepsilon_0$ is an ordinal, $\model{M}$ is a model of $\IDOE$, and  $a,b,c\in \model{M}$ are non-standard numbers such that $\model{M}\models \fast_{\omega_{n-1}^{\omega^{\alpha}\cdot c}}(a)=b$. Then there exists a cut $\model{I}$ of $\model{M}$ such that $a<\model{I}<b$ and $\model{I}$ is a model of $\IDOE+\TI{\Pi_n}{\omega^{\omega^\alpha}}$.
\end{theorem}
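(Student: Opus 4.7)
The plan is to follow an indicator-style construction in the spirit of Paris--Kirby--Harrington, specialized to the fast-growing hierarchy as in \cite{KetSol81}. The idea is to define the cut $\model{I}$ directly in terms of iterates of $\fast_{\omega^{\omega^\alpha}}$ and to exploit the tower height $n-1$ of the exponent $\omega_{n-1}^{\omega^\alpha \cdot c}$ to buy $n-1$ levels of induction complexity, while the nonstandard factor $c$ buys the overall length needed for the descending-sequence argument.

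First I would set
\[
\model{I} := \{ x \in \model{M} : x \le \fast_{\omega^{\omega^\alpha}}^{k}(a) \text{ for some standard } k \ge 0\}.
\]
That $a < \model{I}$ is immediate from Lemma \ref{fasthier_IDOE_facts}(4), and $\model{I} < b$ comes from $\fast_{\omega_{n-1}^{\omega^\alpha \cdot c}}(a) = b$ via the stepdown analysis: by Lemmas \ref{stepdown_exp}--\ref{stepdown_aux3} the stepdown path from $\omega_{n-1}^{\omega^\alpha \cdot c}$ to $0$ with base parameter $a$ passes through nonstandardly many copies of $\omega^{\omega^\alpha}+1$, so every iterate $\fast_{\omega^{\omega^\alpha}}^{k}(a)$ is still $< b$. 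Closure of $\model{I}$ under $0, \mathsf{S}, +, \times$ and exponentiation is automatic because $\fast_{\omega^{\omega^\alpha}}$ dominates every Kalmar elementary function; $\Delta_0$-induction is inherited from $\model{M}$ since $\model{I}$ is a cut, so $\model{I} \models \IDOE$.

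Second --- and this is the main content --- I would verify $\TI{\Pi_n}{\omega^{\omega^\alpha}}$ inside $\model{I}$. Since $\IDOE$ provides a partial truth predicate $\fofl{True}_{\Pi_n}$, it suffices to treat a single universal instance. Assuming failure inside $\model{I}$, one obtains a progressive $\Pi_n$-formula $\varphi$ together with a counterexample $\beta_0 < \omega^{\omega^\alpha}$. Unfolding progressivity through $\fofl{True}_{\Pi_n}$, read inside $\model{M}$, produces a descending chain of ordinals below $\omega^{\omega^\alpha}$ whose $k$-th term together with its $\Pi_n$-satisfaction witnesses can be bounded by $\fast_{\omega^{\omega^\alpha}}^{k}(a)$. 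The jump from $\TI{\Pi_1}{\omega^{\omega^\alpha}}$ to $\TI{\Pi_n}{\omega^{\omega^\alpha}}$ costs $n-1$ further levels of $\omega$-exponentiation in the ambient ordinal, and this is exactly what the tower $\omega_{n-1}^{\,\cdot\,}$ provides; threading the descending chain into the stepdown path for $\omega_{n-1}^{\omega^\alpha \cdot c}$ then uses the nonstandard $c$ to force termination only beyond $b$, contradicting $\fast_{\omega_{n-1}^{\omega^\alpha \cdot c}}(a) = b$.

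The main obstacle is this last step: making the correspondence between the extra $n-1$ tower levels and the evaluation of $\fofl{True}_{\Pi_n}$ on the cut fully quantitative. One must verify that the bounded-quantifier witnesses introduced when evaluating $\fofl{True}_{\Pi_n}$, aggregated over a $k$-step chain, still fit below $\fast_{\omega^{\omega^\alpha}}^{k}(a)$, and that the resulting descending chain really does compile into a stepdown path whose length exceeds what $\omega_{n-1}^{\omega^\alpha \cdot c}$ can sustain. This quantitative bookkeeping --- essentially the technical core of Theorem 5.25 of \cite{Som95} --- is where the bulk of the work lies.
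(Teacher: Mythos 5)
This theorem is not proved in the paper at all: it is imported verbatim from Sommer (\cite[Theorem~5.25]{Som95}), so there is no in-paper argument to compare yours against. Judged on its own terms, your proposal has a concrete defect already at the first step. You define $\model{I}$ as the closure of $a$ under standardly many iterates of $\fast_{\omega^{\omega^\alpha}}$, but for $n=1$ the hypothesis ordinal is $\omega_{0}^{\omega^{\alpha}\cdot c}=\omega^{\alpha}\cdot c$, and since $c<\omega$ as an ordinal we have $\omega^{\omega^{\alpha}}\ge\omega^{\alpha+1}>\omega^{\alpha}\cdot c$. By the monotonicity facts (Lemma \ref{fasthier_IDOE_facts} together with the stepdown lemmas, e.g.\ $\omega^{\omega^\alpha}\stepdown{a}\omega^\alpha\cdot c$ for $a\ge c$) the very first iterate satisfies $\fast_{\omega^{\omega^\alpha}}(a)\ge\fast_{\omega^\alpha\cdot c}(a)=b$ whenever it converges, so $\model{I}<b$ fails. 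Note that $n=1$ is exactly the case this paper uses (in the proof of Lemma \ref{epsilon_0_root_lower_bound} the theorem is invoked to get a cut modelling $\TI{\Pi_1}{\omega_{n+1}}$ below $b=\fast_{\omega_n\cdot a}(a)$), so this is not a peripheral case. The generating function for the cut has to be indexed by ordinals strictly below the target induction ordinal (roughly, iterates of $\fast_{\beta}$ for $\beta<\omega_{n}^{\alpha+1}$ in the sense of Theorem \ref{TI_Pi_0^2_analysis}), not by $\omega^{\omega^\alpha}$ itself.

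The second, more fundamental gap is in your verification of $\TI{\Pi_n}{\omega^{\omega^\alpha}}$ on the cut. A progressive $\Pi_n$-formula with a counterexample in a nonstandard model yields an externally infinite descending sequence of \emph{nonstandard} ordinal notations below $\omega^{\omega^\alpha}$, and that is not by itself contradictory: external well-foundedness is unavailable, and there is no reason the $k$-th counterexample "is bounded by $\fast^{k}_{\omega^{\omega^\alpha}}(a)$" --- the counterexamples are just codes and can be small. The contradiction in Sommer's argument comes from a quantitative largeness/indicator analysis (an induction on $n$ that trades one level of the $\omega$-tower in $\omega_{n-1}^{\omega^\alpha\cdot c}$ for one level of quantifier complexity via partial satisfaction predicates, with a genuinely combinatorial base case for $\Pi_1$). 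You explicitly defer exactly this --- "the quantitative bookkeeping" --- which is the entire content of the theorem, so what remains is a restatement of the strategy rather than a proof.
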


\begin{remark}   \label{TI_on_cut_form} It is possible to formalize Theorem \ref{TI_on_cut} in $\PA$ as a theorem schema.  We assume that the model $\model{M}$ is given by some fixed tuple of arithmetical formulas, possibly with additional parameters. Then there are arithmetical formulas $\varphi(x,n,\alpha,a,b,c)$ and $\psi(y,n,\alpha,a,b,c)$  such that $\PA$ proves that if $n,\alpha,a,b,c$ satisfy the conditions of Theorem \ref{TI_on_cut} then the set of all $x$ from $\model{M}$ for which $\varphi(x,n,\alpha,a,b,c)$ holds is a cut $\model{I}$, the formula $\psi(y,n,\alpha,a,b,c)$ gives a $\Pi_{n}$ partial satisfaction relation for $\model{I}$, and the cut $\model{I}$ satisfies the conclusion of Theorem \ref{TI_on_cut}.
\end{remark}

\begin{theorem}(\cite[Theorem~5.2]{Som95}]) \label{TI_Pi_0^2_analysis}For all $n\ge 1$ and all $\alpha,\beta<\varepsilon_0$,
$$\IDOE+\TI{\Pi_n}{\omega^{\omega^\alpha}}\vdash \conv{\fast_{\beta}}\iff \beta<\omega_n^{\alpha+1}.$$
\end{theorem}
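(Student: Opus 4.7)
The plan is to prove the two directions of the biconditional separately, using proof-theoretic methods for the sufficiency direction and the cut-construction of Theorem \ref{TI_on_cut} for the necessity direction.

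For the sufficiency direction (if $\beta < \omega_n^{\alpha+1}$ then $\IDOE + \TI{\Pi_n}{\omega^{\omega^\alpha}} \vdash \conv{\fast_\beta}$), I would proceed by external induction on $n$, using progressivity of $\conv{\fast_\gamma}$ in $\gamma$ (equation (\ref{fast_prog})) together with the Sommer-type reductions relating transfinite induction at different complexity levels (of the kind used in the proof of Lemma \ref{Box_n_prov_conv}). The base case $n = 1$ would be a direct Gentzen-style analysis showing that $\TI{\Pi_1}{\omega^{\omega^\alpha}}$ is already strong enough to prove $\conv{\fast_\beta}$ for each $\beta < \omega^{\alpha+1}$; the inductive step climbs one level of the ordinal tower at the cost of one level of induction complexity, matching $\omega_n^{\alpha+1} \rightsquigarrow \omega_{n+1}^{\alpha+1}$.

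For the necessity direction, I would argue by contrapositive. Suppose $\beta \geq \omega_n^{\alpha+1}$. By compactness applied to $\PA \cup \{c \geq \overline{k} : k \in \omega\} \cup \{\conv{\fast_{\omega_{n-1}^{\omega^\alpha \cdot c}}(c)}\}$ --- each finite subset is consistent because $\PA \vdash \conv{\fast_{\omega_{n-1}^{\omega^\alpha \cdot K}}(K)}$ for every standard $K$ by $\Sigma_1$-completeness applied to a true $\Sigma_1$-statement --- one obtains a non-standard model $\model{M}$ of $\PA$ with a non-standard $a$ such that $b := \fast^{\model{M}}_{\omega_{n-1}^{\omega^\alpha \cdot a}}(a)$ exists in $\model{M}$. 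Applying Theorem \ref{TI_on_cut} (using Remark \ref{TI_on_cut_form}) with this $a$, with $c := a$, and with this $b$, we obtain a cut $\model{I}$ of $\model{M}$ satisfying $a < \model{I} < b$ and $\model{I} \models \IDOE + \TI{\Pi_n}{\omega^{\omega^\alpha}}$. Now from the definition of fundamental sequences one reads off the identity $\omega_n^{\alpha+1}[a] = \omega_{n-1}^{\omega^\alpha \cdot (a+1)}$, so in $\model{M}$:
\[ \fast_{\omega_n^{\alpha+1}}(a) = \fast_{\omega_{n-1}^{\omega^\alpha \cdot (a+1)}}(a) \geq \fast_{\omega_{n-1}^{\omega^\alpha \cdot a}}(a) = b, \]
where the inequality uses Lemma \ref{fasthier_IDOE_facts} together with the obvious $\stepdown{a}{}$-chain from $\omega_{n-1}^{\omega^\alpha \cdot (a+1)}$ down to $\omega_{n-1}^{\omega^\alpha \cdot a}$. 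Hence $\fast_{\omega_n^{\alpha+1}}(a) \notin \model{I}$, so $\model{I} \models \dive{\fast_{\omega_n^{\alpha+1}}(a)}$; by monotonicity in the index (Lemma \ref{fasthier_IDOE_facts}(3)) we get $\model{I} \models \dive{\fast_\beta}$ for any $\beta \geq \omega_n^{\alpha+1}$, witnessing $\IDOE + \TI{\Pi_n}{\omega^{\omega^\alpha}} \nvdash \conv{\fast_\beta}$.

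The main obstacle is the sufficiency direction, where the ordinal bookkeeping must be executed very carefully --- both in the base case (showing that $\TI{\Pi_1}{\omega^{\omega^\alpha}}$ yields $\conv{\fast_\beta}$ up to \emph{exactly} $\omega^{\alpha+1}$ and no further) and in the inductive step (matching each new $\omega$ in the tower with the corresponding jump in induction complexity). The necessity direction, by contrast, becomes rather clean once Theorem \ref{TI_on_cut} is in hand; the key identity $\omega_n^{\alpha+1}[a] = \omega_{n-1}^{\omega^\alpha \cdot (a+1)}$ is what makes the exponent appearing in that theorem match the fundamental sequence of the critical ordinal $\omega_n^{\alpha+1}$.
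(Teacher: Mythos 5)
First, a point of reference: the paper does not prove this theorem at all --- it is imported from Sommer (\cite[Theorem~5.2]{Som95}), and Remark \ref{TI_Pi_0^2_analysis_remark} records that only the $\Leftarrow$ direction is ever used (and formalized). So your proposal has to be judged on its own merits. Your $\Rightarrow$ direction is correct and is essentially the standard argument: the identity $\omega_n^{\alpha+1}[a]=\omega_{n-1}^{\omega^{\alpha}\cdot(a+1)}$ follows by an easy induction on $n$ from the definition of the fundamental sequences; the convergence $\conv{\fast_{\omega_n^{\alpha+1}}(a)}$ needed to invoke Lemma \ref{fasthier_IDOE_facts} holds in $\model{M}$ because $\model{M}\models\PA$ and $\omega_n^{\alpha+1}<\varepsilon_0$ is a standard ordinal (Theorem \ref{prtot}); and the $\stepdown{a}$-chain from $\omega_{n-1}^{\omega^{\alpha}\cdot(a+1)}$ down to $\omega_{n-1}^{\omega^{\alpha}\cdot a}$ is supplied by iterating Lemma \ref{stepdown_exp}. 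Your $\Leftarrow$ direction for $n\ge 2$ is also the right plan and mirrors what the paper itself does in the proof of Lemma \ref{Box_n_prov_conv}: Sommer's Theorem~4.1 reduces $\TI{\Pi_n}{\omega^{\omega^{\alpha}}}$ to $\TI{\Pi_2}{\gamma}$ for all $\gamma<\omega_{n-2}^{(\omega^{\omega^{\alpha}})^{\omega}}=\omega_n^{\alpha+1}$, after which $\Pi_2$-transfinite induction applies to the progressive $\Pi_2$-formula $\conv{\fast_{\gamma}}$.

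The genuine gap is the base case $n=1$ of the $\Leftarrow$ direction, which you dispose of as a ``direct Gentzen-style analysis''. The formula $\conv{\fast_{\gamma}}$ is $\Pi_2$, so $\TI{\Pi_1}{\omega^{\omega^{\alpha}}}$ cannot be applied to it as it stands, and the obvious repair --- fixing the argument $x$ and performing transfinite induction on the $\Sigma_1$-formula $\conv{\fast_{\gamma}(x)}$ --- breaks at successor stages: $\conv{\fast_{\gamma+1}(x)}$ unwinds to $\conv{\fast_{\gamma}^{x+1}(x)}$, which requires convergence of $\fast_{\gamma}$ at the much larger intermediate values, not merely at $x$. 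Extracting a $\Pi_2$ conclusion from $\Pi_1$-transfinite induction is exactly where Sommer must work with a carefully chosen $\Pi_1$ induction formula (bounding the relevant stepdown computations), and it is not routine bookkeeping. Note, moreover, that this is the one instance the paper actually relies on: Lemma \ref{epsilon_0_root_lower_bound} uses $\IDOE+\TI{\Pi_1}{\omega_{n+1}}\vdash\conv{\fast_{\omega_{n}}}$, i.e.\ the theorem at induction complexity $1$. So the part of your proposal left as an exercise is precisely the part that carries the weight here.
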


\begin{remark}\label{TI_Pi_0^2_analysis_remark}We will employ only $\Leftarrow$ direction of Theorem \ref{TI_Pi_0^2_analysis}. Examination of the proof of \cite[Lemma~5.5]{Som95} shows that $\Leftarrow$ direction of Theorem \ref{TI_Pi_0^2_analysis} can be formalized in $\IDOE$.
\end{remark}

A result close to the following theorem goes back to Paris \cite{Par80}. In the form given below the theorem can be derived from results of Beklemishev \cite[Theorem~1, Proposition~7.3, Remark~7.4]{Bek03}, Freund  has proved this theorem explicitely \cite{Fre15} for the case of $\alpha<\omega$.
\begin{theorem}\label{one_con_fast_hier_equiv} $\mbox{\cite{Bek03,Fre15}}$ $$\ISi{1}\vdash \forall \alpha\in[1,\omega](\DiamondI[1]\!_{\alpha}\top \mathrel{\leftrightarrow} \conv{\fast_{\omega_\alpha}}).$$
\end{theorem}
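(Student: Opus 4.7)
The plan is to prove the biconditional by establishing each direction separately inside $\ISi{1}$, with $\alpha \in [1,\omega]$ treated as a free parameter. Both directions rest on the standard reformulation of $\DiamondI[1]_\alpha\top$ as the $\Sigma_1$-soundness of $\ISi{\alpha}$: every $\ISi{\alpha}$-provable $\Sigma_1$-sentence is true (this is a direct unpacking of the paper's definition of $\BoxI[1]_\alpha\bot$ via true $\Pi_1$ premises).

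For the easier direction, $\DiamondI[1]_\alpha\top \to \conv{\fast_{\omega_\alpha}}$, I would fix an arbitrary $k$ and invoke Lemma \ref{Box_n_prov_conv} to get $\Box_\alpha\conv{\fast_{\omega_\alpha}(\overline{k})}$. The case $\alpha=\omega$ is covered by the identity $\fast_{\varepsilon_0}(k)=\fast_{\omega_{k+1}}(k)$ together with Lemma \ref{Box_n_prov_conv} applied at level $k+1$ (since $\Box_\omega$ majorizes $\Box_{k+1}$). As $\conv{\fast_{\omega_\alpha}(\overline{k})}$ is $\Sigma_1$, the hypothesis $\DiamondI[1]_\alpha\top$ delivers its truth. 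Quantifying over $k$ yields $\conv{\fast_{\omega_\alpha}}$. Every step here is directly formalizable in $\ISi{1}$.

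The reverse direction, $\conv{\fast_{\omega_\alpha}} \to \DiamondI[1]_\alpha\top$, is the substantive part, and I would argue model-theoretically via the cut constructions just recalled. Suppose $\Box_\alpha\sigma$ for a $\Sigma_1$-sentence $\sigma$, witnessed by an $\ISi{\alpha}$-proof $p$. Using the totality hypothesis together with the unfolding of the fast-growing hierarchy, pass from $p$ to a pair $(a,b)$ with $a$ dominating $p$ and $b=\fast_{\omega_{n-1}^{\omega^\beta\cdot c}}(a)$, for parameters $(n,\beta,c)$ tuned to $\alpha$: for finite $\alpha=n$ take $\beta=0$ and $c$ sufficiently large that $\omega_{n-1}^{c}\ge\omega_n$, and for $\alpha=\omega$ split by internal case on the level $m$ of some $\ISi{m}$ containing $p$ and use $\fast_{\varepsilon_0}(a)=\fast_{\omega_{a+1}}(a)$. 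Theorem \ref{TI_on_cut}, in the formalized form of Remark \ref{TI_on_cut_form}, then produces a cut $\model{I}$ with $a<\model{I}<b$ carrying a $\Pi_n$-partial satisfaction relation and satisfying $\IDOE+\TI{\Pi_n}{\omega^{\omega^\beta}}$. Choosing $\beta$ so that $\omega^{\omega^\beta}\ge\omega$ guarantees that $\model{I}$ models $\Sigma_n$-induction and hence $\ISi{\alpha}$; since $p<\model{I}$, the proof $p$ verifies $\sigma$ inside $\model{I}$ via the partial satisfaction relation, giving $\model{I}\models\sigma$; and upward $\Sigma_1$-absoluteness from the cut $\model{I}$ to the ambient standard model yields the truth of $\sigma$, as required.

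The principal obstacle is executing this cut construction inside $\ISi{1}$ rather than inside $\PA$, since Remark \ref{TI_on_cut_form} is advertised as $\PA$-formalizable. However, once the complexity parameters $n$ and $\beta$ are fixed externally, the construction invokes induction only up to that bounded complexity, so the instance at hand is available in $\ISi{1}$. A secondary subtlety is the uniform handling of $\alpha$ over $[1,\omega]$: this is managed by the case split on finite $\alpha$ versus $\alpha=\omega$ indicated above, with the $\omega$-case reduced to the finite case at the level $m$ of an $\ISi{m}$ containing $p$, exploiting the identity for $\fast_{\varepsilon_0}$.
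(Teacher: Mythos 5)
Your proposal attempts to prove the whole statement from scratch, whereas the paper's proof is essentially a citation: it invokes Freund's result \cite{Fre15} for the uniform finite case $\alpha\in[1,\omega)$ and then settles $\alpha=\omega$ by the two equivalences $\DiamondI[1]_{\omega}\top\leftrightarrow\forall x\,\DiamondI[1]_{x}\top$ and $\forall x\,\conv{\fast_{\omega_x}}\leftrightarrow\conv{\fast_{\varepsilon_0}}$ (the latter from Lemma \ref{fasthier_IDOE_facts}). Your easy direction ($\DiamondI[1]_{\alpha}\top\to\conv{\fast_{\omega_\alpha}}$ via Lemma \ref{Box_n_prov_conv} and $\Sigma_1$-soundness) is fine and fully uniform in $\alpha$.

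The hard direction has a genuine gap, and it sits exactly where you try to wave it away. The theorem is $\ISi{1}\vdash\forall\alpha\in[1,\omega](\dots)$, so $\alpha$ --- and hence the complexity parameter $n$ of the cut construction --- is an \emph{internally} quantified variable; you cannot ``fix the complexity parameters $n$ and $\beta$ externally.'' For each fixed standard $n$ your argument would give a separate $\ISi{1}$-proof of the instance at $\alpha=n$, but that yields only an external schema, not the single uniform sentence claimed. The obstruction is concrete: the cut $\model{I}$ produced by Theorem \ref{TI_on_cut} carries only a $\Pi_n$ partial satisfaction relation, and the complexity of the satisfaction relation needed to verify the $\ISi{\alpha}$-axioms along the (cut-free) proof $p$ grows with $\alpha$. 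Arranging this uniformly in an internal parameter is precisely the nontrivial content of \cite{Fre15} (and of Sommer's machinery), and your sketch supplies no mechanism for it. Two smaller points: (i) your side condition ``$c$ sufficiently large that $\omega_{n-1}^{c}\ge\omega_n$'' is unsatisfiable, since $\omega_{n-1}^{c}<\omega^{\omega_{n-1}}=\omega_n$ for every finite $c$; fortunately it is also unnecessary --- you only need $\omega_{n-1}^{c}<\omega_n$ so that $\conv{\fast_{\omega_n}}$ yields convergence of $\fast_{\omega_{n-1}^{c}}$, and $\beta=0$ so that $\model{I}\models\TI{\Pi_n}{\omega}$, which is $\Sigma_n$-induction. (ii) Passing from ``$p<\model{I}$ and $\model{I}\models\ISi{\alpha}$'' to ``$\model{I}\models\sigma$'' requires first eliminating cuts from $p$ (so that all formulas in the proof are covered by the partial satisfaction relation) and ensuring $\model{I}$ lies above the superexponential of $p$; this is doable in $\ISi{1}$ but is not automatic from the cut construction as you state it.
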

\begin{proof} In \cite{Fre15} it has been proved that
$$\ISi{1}\vdash \forall \alpha\in[1,\omega)(\DiamondI[1]\!_{\alpha}\top \mathrel{\leftrightarrow} \conv{\fast_{\omega_\alpha}}).$$
We need to prove that $\ISi{1}\vdash \DiamondI[1]\!_{\omega}\top \mathrel{\leftrightarrow} \conv{\fast_{\varepsilon_0}}$.
Clearly,
$$\begin{aligned}\ISi{1}\vdash  \DiamondI[1]\!_{\omega}\top & \mathrel{\leftrightarrow} & \forall x \DiamondI[1]\!_{x}\top\\ & \mathrel{\leftrightarrow} &  \forall x \conv{\fast_{\omega_x}}.\end{aligned}$$
From Lemma \ref{fasthier_IDOE_facts} it follows that $\IDOE\vdash \forall x \,\conv{\fast_{\omega_x}} \mathrel{\leftrightarrow}  \conv{\fast_{\varepsilon_0}}$. Thus indeed $$\ISi{1}\vdash \DiamondI[1]\!_{\omega}\top \mathrel{\leftrightarrow} \conv{\fast_{\varepsilon_0}}.$$
\vspace{-38pt}

\end{proof}

\begin{lemma}\label{epsilon_0_root_lower_bound}($\PA$) For every arithmetical sentence $\varphi$ if $\Diamond \varphi$ then $\DiamondSl\!_{1}^{\varepsilon_0}\varphi$. 
\end{lemma}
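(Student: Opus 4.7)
The plan is to reason inside $\PA$ under the assumption $\Diamond\varphi$ and derive $\DiamondSl\!_{1}^{\varepsilon_0}\varphi$ by a formalized model-theoretic cut argument. First I would reduce the goal: by Lemma~\ref{iteration_facts_1}(8) combined with the dual of monotonicity in item~(3) and the cofinality of $\{\omega_n\}_{n<\omega}$ in $\varepsilon_0$, the sentence $\DiamondSl\!_{1}^{\varepsilon_0}\varphi$ is equivalent in $\IDOE$ to $\forall n\in\omega\,\DiamondSl\!_{1}^{\omega_n}\varphi$. Hence it suffices to prove inside $\PA$ that $\Diamond\varphi\to\forall n\,\DiamondSl\!_{1}^{\omega_n}\varphi$, which is a single $\Pi_1$-implication.

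By formalized G\"odel completeness the premise $\Diamond\varphi$ yields a full model $\model{M}\models\PA+\varphi$, which I use as a parameter for the rest of the argument. Fix $n$. Since $\model{M}\models\PA$, picking a nonstandard $a$ and a nonstandard $c$ in $\model{M}$ the value $b:=\fast_{\omega_{n+1}^{\omega\cdot c}}(a)$ also lies in $\model{M}$. Applying the formalized cut construction of Remark~\ref{TI_on_cut_form} (based on Theorem~\ref{TI_on_cut}) to the triple $(a,b,c)$ yields a cut $\model{I}$ of $\model{M}$ with $a<\model{I}<b$ that carries a $\Pi_{n+2}$ partial satisfaction relation and validates $\IDOE+\TI{\Pi_{n+2}}{\omega^{\omega}}$. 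Because $\varphi$ is a standard arithmetical sentence true in $\model{M}$, it is absolute to $\model{I}$, so $\model{I}\models\varphi$.

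Next I would extract the relevant reflection inside $\model{I}$. The $\Leftarrow$ direction of Theorem~\ref{TI_Pi_0^2_analysis}, formalized as in Remark~\ref{TI_Pi_0^2_analysis_remark}, gives $\conv{\fast_\beta}$ in $\model{I}$ for all $\beta<\omega_{n+2}^{\omega+1}$; by Theorem~\ref{one_con_fast_hier_equiv} this yields uniform $\Pi_2$-reflection for $\ISi{k}$ for every $k$ within reach of $\model{I}$'s partial satisfaction predicate. Using these reflection principles, the Hilbert-Bernays-L\"ob conditions for the iterates $\BoxSl_1^{\alpha}$ (Lemma~\ref{iteration_facts_1}) and the fact that $\model{I}\models\varphi$, I run a reflexive induction (Lemma~\ref{reflexive_induction}) inside $\model{I}$ along the ordinal segment $[1,\omega_n]$ to establish $\DiamondSl\!_{1}^{\omega_n}\varphi$ in $\model{I}$: at a successor stage, any witness of $\BoxSl_1\BoxSl_1^{\alpha'}\lnot\varphi$ would, via reflection for the corresponding $\ISi{x}$ (licensed by $\conv{\fast_{\varepsilon_0}(x-1)}$), give the truth of $\BoxSl_1^{\alpha'}\lnot\varphi$, which by the induction hypothesis contradicts $\model{I}\models\varphi$; at a limit stage one merely collects the previously established instances via Lemma~\ref{iteration_facts_1}(8). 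Since $\DiamondSl\!_{1}^{\omega_n}\varphi$ is $\Pi_1$, its truth in the cut transfers to external truth in the ambient $\PA$-reasoning, finishing the argument.

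The main obstacle I foresee is calibrating the cut so that the complexity $\Pi_{n+2}$ of the partial satisfaction predicate and the ordinal height $\omega^{\omega}$ (or its refinement) of the available transfinite induction jointly dominate the iteration height $\omega_n$ in $\DiamondSl\!_{1}^{\omega_n}\varphi$ and supply precisely enough reflection to drive the successor step of the reflexive induction. The shift by $1$ in $\fast_{\varepsilon_0}^{(1)}$ (as opposed to $\fast_{\varepsilon_0}^{(0)}$ underlying $\BoxSl_0$) is what provides the needed slack in the stepdown relation for each reflection instance to close without outrunning the cut's induction, explaining why the analogous lower bound fails for $\BoxSl_0$.
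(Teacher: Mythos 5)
Your overall strategy---formalized completeness, a cut obtained from Sommer's theorem, transfinite induction inside the cut, and $\Pi_1$-transfer back out---is the right family of ideas, and is indeed the skeleton of the paper's argument. But three of your steps break down. First, you claim that $\varphi$, being true in $\model{M}$, is ``absolute to $\model{I}$''. Arbitrary arithmetical sentences are not preserved when passing to a cut (only $\Pi_1$ sentences descend, $\Sigma_1$ ascend), and moreover your cut only carries a $\Pi_{n+2}$ partial satisfaction relation, so for $\varphi$ of higher complexity the assertion $\model{I}\models\varphi$ is not even expressible. The paper instead uses essential reflexivity of $\PA$ to get $\model{M}\models\Diamond\!_n\varphi$ and pushes \emph{that} $\Pi_1$ sentence down into the cut; the induction then runs on $\DiamondSl\!_1^{\alpha}\varphi$, never needing $\varphi$ itself in $\model{I}$. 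Second, your successor step needs, for every $x$ with $\conv{\fast_{\varepsilon_0}(x-1)}$ in the cut, reflection for $\ISi{x}$. Your cut provides no control over which such $x$ exist: nothing forces $\dive{\fast_{\varepsilon_0}(n)}$ in $\model{I}$, so a witness of $\BoxSl_1\psi$ may involve $\Box_x\psi$ for nonstandard $x$, and $\TI{\Pi_{n+2}}{\omega^{\omega}}$ only yields $\conv{\fast_\beta}$ for $\beta<\omega_{n+2}^2$, hence only $\DiamondI[1]_k\top$ for bounded $k$ --- not the reflection you invoke. The whole point of the paper's careful placement of the cut \emph{below} $u=\fast_{\varepsilon_0}(n)$ (computed in $\model{M}$) is to force $\dive{\fast_{\varepsilon_0}(n)}$ in $\model{I}_n$, capping slow provability at $\Box_n$ so that the single reflection instance $\DiamondI[1]_n\top$ (available from $\TI{\Pi_1}{\omega_{n+1}}$ via Theorems \ref{TI_Pi_0^2_analysis} and \ref{one_con_fast_hier_equiv}) closes the successor case. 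Getting both $\dive{\fast_{\varepsilon_0}(n)}$ and $\TI{\Pi_1}{\omega_{n+1}}$ on the same cut is the technical heart of the proof (the stepdown computations with Lemmas \ref{stepdown_exp}--\ref{stepdown_aux4}), and it is absent from your proposal.

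Third, you ``pick a nonstandard $a$'' in $\model{M}$ without justification. Inside $\PA$ the only way the paper produces such an element is as $\fast_{\varepsilon_0}(n_0)$ evaluated in $\model{M}$, where $\dive{\fast_{\varepsilon_0}(n_0)}$ holds externally --- and this is exactly why the proof splits into the cases $\BoxI[1]\bot$ and $\DiamondI[1]\top$: in the second case no such $n_0$ exists and the entire model-theoretic argument must instead be relativized under $\DiamondI[1]$ via a L\"ob-style trick and then pulled out using that $\forall\alpha{<}\varepsilon_0\,\Diamond\DiamondSl\!_1^{\alpha}\varphi$ is $\Pi_1$. Your proposal has no analogue of this case distinction. (Two smaller points: ``reflexive induction inside $\model{I}$'' conflates a proof-theoretic rule about provability with genuine transfinite induction available in the cut, which is what is actually used; and your closing remark that the lower bound ``fails for $\BoxSl_0$'' is not supported --- Theorem \ref{Epsilon_note_root_theorem} covers $\thr{S}_n=\ISi{n}$ as well.)
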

\begin{proof} We reason in $\PA$. In order to prove our claim we consider two cases $\BoxI[1]\bot$ and $\lnot\BoxI[1]\bot$, i.e. $\DiamondI[1]\top$.  

We start with the case of $\BoxI[1]\bot$.  From Theorem \ref{one_con_fast_hier_equiv} for the case of $\alpha=\omega$ it follows that there exists $n_0$ such that $\dive{\fast_{\varepsilon_0}(n_0)}$  and hence $\dive{\fast_{\varepsilon_0}(n)}$, for every $n\ge n_0$. Thus for all $n\ge n_0$ and arithmetical sentences $\psi$ we have $\DiamondSl_1\psi$ if $\Diamond_n\psi$. By straightforward calculations we exclude cases $n_0=0,1$. Hence $n_0\ge 2$. 

Suppose we have been given an arithmetical sentence $\varphi$ such that $\Diamond \varphi$. We need to show that $\DiamondSl\!_1^{\varepsilon_0}\varphi$ holds. By definition $\DiamondSl\!_1^{\varepsilon_0}\varphi$ iff for every $\alpha<\varepsilon_0$ we have $\DiamondSl\!_1\DiamondSl\!_1^{\alpha}\varphi$. Thus it is sufficient to show that for every $\alpha<\varepsilon_0$ we have $\Diamond\DiamondSl\!_1^{\alpha}\varphi$.

By Completeness Theorem we have a model $\model{M}$ of $\PA+\varphi$. Let us consider arbitrary $n > n_0$. We will construct a cut $\model{I}_{n}$ of $\model{M}$ such that $\model{I}_{n}$ is a model of $\IDOE+\TI{\Pi_1}{\omega_{n+1}}+\dive{\fast_{\varepsilon_0}(n)}$. 

First, assume that we have already contstructed  the cut $\model{I}_n$. Giving that $\PA$ is essentially reflexive, $\model{M}\models\Diamond\!_{n}\varphi$. Hence because $\Diamond\!_{n}\varphi$ is $\Pi_1$ we have $\model{I}_n\models \Diamond\!_{n}\varphi$.  Combining Theorem \ref{TI_Pi_0^2_analysis}, Remark \ref{TI_Pi_0^2_analysis_remark}, and Theorem \ref{one_con_fast_hier_equiv} we see that $\IDOE+\TI{\Pi_1}{\omega_{n+1}}\vdash \DiamondI[1]_{n}\top$ and thus $\model{I}_n\models\DiamondI[1]_n\top$.  

Let us reason in $\IDOE+\TI{\Pi_1}{\omega_{n+1}}+\dive{\fast_{\varepsilon_0}(n)}+\DiamondI[1]_{n}\top+\Diamond\!_{n}\varphi$. We claim that $\Diamond\!_{n}\DiamondSl\!_1^{\alpha}\varphi$, for all non-zero $\alpha<\omega_{n+1}$. We prove by transfinite induction that $\DiamondSl\!_1^{\alpha}\varphi$, for all non-zero $\alpha<\omega_{n+1}$. The base holds because $\DiamondSl\!_1\varphi$ follows from $\Diamond\!_{n}\varphi$.  The limit case follows directly from definition. For the successor case we use the fact that $\DiamondSl\!_{1}^{\alpha}\varphi$ is $\Pi_1$ and that $\ISi{n}$ is consistent with all true $\Pi_1$-sentences. Thus we obtain  $\Diamond\!_{n}\DiamondSl\!_1^{\alpha}\varphi$ and next $\DiamondSl\!_1^{\alpha+1}\varphi$. This finishes the inductive proof.  In order to prove our claim we note that $\Diamond\!_{n}\DiamondSl\!_1^{\alpha}\varphi$  follows from $\DiamondSl\!_1^{\alpha}\varphi$ since $\DiamondSl\!_1^{\alpha}\varphi$ is $\Pi_1$.
Thus if $\model{I}_n$ will have the above properties, we will have $\model{I}_n\models \Diamond\!_n\DiamondSl\!_1^{\alpha}\varphi$, for all non-zero $\alpha<\omega_{n+1}$

Now we will construct the cut $\model{I}_n$. From Theorem \ref{prtot} it follows that $\PA\vdash \conv{\fast_{\omega_{n+1}}}$  and thus $\model{M}\models \conv{\fast_{\varepsilon_0}(n)}$. Let us denote by $u\in\model{M}$ the non-standard number such that $$\model{M}\models \fast_{\varepsilon_0}(n)=u.$$ Since $\conv{\fast_{\varepsilon_0}(n)}$ is a $\Sigma_1$-sentence, for every cut $\model{I}$ such that $\model{I}<u$, we have $\model{I}\models \dive{\fast_{\varepsilon_0}(n)}$.

Thus it is sufficient to take as $\model{I}_n$ any cut $\model{I}$ such that  $\model{I}<u$ and $$\model{I}\models \IDOE+\TI{\Pi_1}{\omega_{n+1}}.$$ Let us denote by $a\in\model{M}$ the nonstandard number such that $$\model{M}\models a=\fast_{\varepsilon_0}(n_0)=\fast_{\omega_{(n_0+1)}}(n_0).$$ We denote by $b$ the nonstandard number such that $$\model{M}\models b=\fast_{\omega_{n}\cdot a}(a).$$ Now we can apply Theorem \ref{TI_on_cut} with $c=a$ and obtain a cut $\model{I}$ such that $a<\model{I}<b$ and $\model{I}\models \IDOE+\TI{\Pi_1}{\omega_{n+1}}$.

 We claim that $\model{M}\models u>b=\fast_{\omega_n\cdot a}(a)$ and thus that $\model{I}_n<u$. Clearly, for any $k<n+1$ we will get a limit ordinal by applying operation $\alpha\longmapsto \alpha[n]$ exactly $k$ times to $\omega_{n+1}$.  Thus $n+1$ is the least $r$ such that $\omega_{n+1}\stepdown[r]{n}\beta+1$ for some $\beta$ . We denote by $\beta$ the ordinal such that $\omega_{n+1}\stepdown[n+1]{n}\beta+1$.  Since $n> n_0\ge 2$, the ordinal $\beta$ is greater than $\omega_n^2$.   Also, $\beta$ is greater than $\omega_{n_0+1}$.  Using induction we derive that $\omega_{n+1}\stepdown{n}\omega_n^2$ and $\omega_{n+1}\stepdown{n}\omega_{n_0+1}$ from Lemma \ref{stepdown_exp}. Since $\beta>\omega^2_n$ and $\beta>\omega_{n_0+1}$, we have $\beta\stepdown{n}\omega_n^2$ and $\beta\stepdown{n}\omega_{n_0+1}$. Since $\model{M}$ satisfies $\IDOE$, we can use the definition of fast-growing hierarchy to obtain $$\model{M}\models u=\fast_{\varepsilon_0}(n)=\fast_{\beta}^{n+1}(n).$$ From Lemma  \ref{fasthier_IDOE_facts} it follows that $\model{M}\models \omega_{n-1}\stepdown{a} 0$. By Lemma \ref{stepdown_aux2} we have $\model{M}\models \omega_{n-1}^2\stepdown{a} \omega_{n-1}+1$. Therefore by Lemma \ref{stepdown_exp} we have  $\model{M}\models \omega_{n}^2\stepdown{a} \omega^{\omega_{n-1}+1}$. Thus $\model{M}\models  \omega_n^2\stepdown{a}\omega_n\cdot a$. From Lemma \ref{fasthier_IDOE_facts} and Lemma \ref{stepdown_aux4} it follows that $$\begin{aligned} \model{M}\models \fast_{\beta}^{n+1}&(n) >\fast_{\beta}^2(n)\ge \fast_{\omega_{n}^2}(\fast_{\omega_{n_0+1}}(n))\ge \\ & \fast_{\omega_{n}^2}(\fast_{\omega_{n_0+1}}(n_0))=\fast_{\omega_{n}^2}(a)\ge\fast_{\omega_{n}\cdot a}(a).\end{aligned}$$ Hence we have proved our claim.

Thus for every $n$ and $\alpha<\omega_{n+1}$ we have a model of $\IDOE+\Diamond\!_{n}\DiamondSl\!_1^{\alpha}\varphi$. Thus for every $n$ and $\alpha<\varepsilon_0$ we have a model of $\IDOE+\Diamond\!_{n}\DiamondSl\!_1^{\alpha}\varphi$, since for every $\psi$ and $n_1<n_2$ we have $\IDOE\vdash \Diamond\!_{n_2}\psi\to \Diamond\!_{n_1}\psi$. Note that every $\Pi_1$-sentence that holds in some model of $\IDOE$ is true. Thus for every $\alpha<\varepsilon_0$ and number $n$   we have  $\Diamond\!_{n}\DiamondSl\!_1^{\alpha}\varphi$. Therefore, for every $\alpha<\varepsilon_0$ we have $\Diamond\DiamondSl\!_1^{\alpha}\varphi$. And therefore $\DiamondSl\!_1^{\varepsilon_0}\varphi$.

Now assume  that $\DiamondI[1]\top$.  Suppose we have been given an arithmetical sentence $\varphi$ such that $\Diamond\varphi$. By Löb's Theorem for $\DiamondI[1]$ we have $\DiamondI[1]\lnot \DiamondI[1]\top$.  Thus because $\Diamond\varphi$ is $\Pi_1$, we have $\DiamondI[1](\lnot \DiamondI[1]\top\land \Diamond\varphi)$. Therefore we have $\Diamond(\lnot \DiamondI[1]\top\land \Diamond\varphi)$. Note that formalization of the first part of the proof gives us $$\PA\vdash\lnot \DiamondI[1]\top \to \forall \psi(\Diamond\psi\to \forall \alpha<\varepsilon_0 \Diamond \DiamondSl\!_1^\alpha\psi).$$
Using that we conclude that $\Diamond(\forall \alpha<\varepsilon_0 \Diamond \DiamondSl\!_1^\alpha\varphi)$. Next we see that  $\forall \alpha<\varepsilon_0 \Diamond \DiamondSl\!_1^\alpha\varphi$ is a $\Pi_1$-sentence. Therefore we have $\forall \alpha<\varepsilon_0 \DiamondSl\!_1 \DiamondSl\!_1^\alpha\varphi$, i.e. $\DiamondSl\!_1^{\varepsilon_0}\varphi$.\end{proof}

Using Lemma \ref{epsilon_0_root_lower_bound} and Lemma \ref{epsilon_0_upper_bound},  we obtain the following  theorem:

\begin{theorem}[$\PA$]\label{Ite_Corollary}
 Suppose $\varphi$ is a sentence  and $\alpha<\varepsilon_0$ is a non-zero ordinal. Then the following sentences are equivalent:
\begin{enumerate}
\item $\Box\varphi$,
\item $\BoxSl_1^{\varepsilon_0}\varphi$,
\item $\Box\BoxSl_1^{\alpha}\varphi$.
\end{enumerate}
\end{theorem}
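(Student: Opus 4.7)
The plan is to derive the full three-way equivalence directly from Lemma~\ref{epsilon_0_upper_bound} and Lemma~\ref{epsilon_0_root_lower_bound}, by first establishing $(1)\Leftrightarrow(2)$ and then instantiating that universal equivalence at the sentence $\BoxSl_1^\alpha\varphi$, invoking the ordinal-arithmetic absorption $\alpha+\varepsilon_0=\varepsilon_0$.

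I first verify the hypotheses of the cited lemmas for $\triangle=\BoxSl_1$. By Remark~\ref{shifting}, $\BoxSl_1$ coincides with $\Box_{\thr{S}_x,\fast_{\varepsilon_0}}$ for the enumeration $\thr{S}_n:=\ISi{n+1}$, whose members are finitely axiomatizable and whose union is provably equal to $\PA$. Lemma~\ref{epsilon_0_upper_bound} then delivers $(1)\Rightarrow(2)$, i.e.\ $\PA\vdash \Box\psi\to\BoxSl_1^{\varepsilon_0}\psi$ for every arithmetical $\psi$. For the converse I apply Lemma~\ref{epsilon_0_root_lower_bound} with $\lnot\psi$ in place of $\varphi$ to obtain $\Diamond\lnot\psi \to \DiamondSl_1^{\varepsilon_0}\lnot\psi$, and then contrapose using the duality from Lemma~\ref{iteration_facts_1}(\ref{iteration_facts_1_1}), yielding $\PA\vdash\BoxSl_1^{\varepsilon_0}\psi\to\Box\psi$ and hence $(2)\Rightarrow(1)$. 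At this stage $(1)\Leftrightarrow(2)$ holds provably in $\PA$ uniformly in the arithmetical sentence to which it is applied.

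The main step is to instantiate this universal equivalence at $\psi:=\BoxSl_1^\alpha\varphi$, giving
\begin{equation*}
\Box\BoxSl_1^\alpha\varphi \;\leftrightarrow\; \BoxSl_1^{\varepsilon_0}\BoxSl_1^\alpha\varphi.
\end{equation*}
By Lemma~\ref{iterations_and_addition}, with outer iteration index $\varepsilon_0$ and inner index $\alpha$, the right-hand side is provably equivalent to $\BoxSl_1^{\alpha+\varepsilon_0}\varphi$. Now $\varepsilon_0=\omega^{\varepsilon_0}$ absorbs every strictly smaller ordinal on the right of $+$, so a routine Cantor-normal-form calculation formalizable in $\IDOE$ gives $\alpha+\varepsilon_0=\varepsilon_0$; combining this with Lemma~\ref{prov_iter_unique} one obtains $\BoxSl_1^{\alpha+\varepsilon_0}\varphi\leftrightarrow\BoxSl_1^{\varepsilon_0}\varphi$. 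Chaining the equivalences gives $\Box\BoxSl_1^\alpha\varphi\leftrightarrow\BoxSl_1^{\varepsilon_0}\varphi$, which together with $(1)\Leftrightarrow(2)$ completes the cycle $(1)\Leftrightarrow(2)\Leftrightarrow(3)$.

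The only point requiring care is the translation of the ordinal identity $\alpha+\varepsilon_0=\varepsilon_0$ into a provable equivalence between the concrete iteration formulas $\BoxSl_1^{\alpha+\varepsilon_0}\varphi$ and $\BoxSl_1^{\varepsilon_0}\varphi$, but this is standard formalization of basic ordinal arithmetic on Cantor notations together with the uniqueness of iteration formulas recorded in Lemma~\ref{prov_iter_unique}, so I do not anticipate any deeper obstacle.
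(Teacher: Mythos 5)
Your proof is correct. For the equivalence $(1)\Leftrightarrow(2)$ you follow exactly the paper's route: the paper derives the theorem by combining Lemma~\ref{epsilon_0_upper_bound} (for $(1)\Rightarrow(2)$, after checking via Remark~\ref{shifting} that $\BoxSl_1$ is of the form $\Box_{\thr{S}_x,\fast_{\varepsilon_0}}$ with $\thr{S}_n=\ISi{n+1}$) with the contrapositive of Lemma~\ref{epsilon_0_root_lower_bound} (for $(2)\Rightarrow(1)$), just as you do. Where you genuinely diverge is item $(3)$. The paper's one-line proof implicitly relies on what the proof of Lemma~\ref{epsilon_0_root_lower_bound} actually establishes en route, namely $\Diamond\varphi\to\forall\alpha<\varepsilon_0\,\Diamond\DiamondSl_1^{\alpha}\varphi$, which contraposes directly to $(3)\Rightarrow(1)$; the converse $(1)\Rightarrow(3)$ then comes from Corollary~\ref{slowcon_inv} together with the monotonicity $\BoxSl_1\psi\to\BoxSl_1^{\alpha}\psi$ of Lemma~\ref{iteration_facts_1}. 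You instead treat $(1)\Leftrightarrow(2)$ as a black box, instantiate it at $\psi:=\BoxSl_1^{\alpha}\varphi$, and collapse $\BoxSl_1^{\varepsilon_0}\BoxSl_1^{\alpha}\varphi$ to $\BoxSl_1^{\varepsilon_0}\varphi$ via Lemma~\ref{iterations_and_addition} and the absorption $\alpha+\varepsilon_0=\varepsilon_0$. This is more modular --- it uses only the \emph{statements} of the two main lemmas plus the general iteration calculus of Section~\ref{Transfinite_Section}, rather than reaching inside the proof of the lower-bound lemma --- at the cost of invoking the composition law and a small amount of formalized ordinal arithmetic on notations, which you correctly flag as routine (and which is harmless here since the Cantor notation for $\alpha+\varepsilon_0$ is just that of $\varepsilon_0$). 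Both routes are sound; the hypotheses of Lemma~\ref{iterations_and_addition} (outer index $\geq 1$, inner index $\leq\varepsilon_0$) are satisfied in your application.
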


Using  Lemma \ref{iterations_monotonicity_for_predicates} and Lemma \ref{epsilon_0_upper_bound}  we can generalize the previous theorem to a wider spectrum of provability predicates:
\begin{theorem}  \label{Epsilon_note_root_theorem}Suppose  $\thr{S}_n$ is a recursive sequence of finitely axiomatizable theories such that $\PA$ proves that $\PA=\bigcup \limits_{n\in \omega} \thr{S}_n$  and $\thr{S}_n\subset \ISi{n + 1}$. Let $\triangle$ denote the provability predicate $\Box_{\thr{S}_x,\fast_{\varepsilon_0}}^{\varepsilon_0}$. Then $\PA$ proves that for every arithmetical sentence $\varphi$,  and non-zero ordinal $\alpha<\varepsilon_0$  the following sentences are equivalent:
\begin{enumerate}
\item $\Box\varphi$,
\item $\triangle^{\varepsilon_0}\varphi$,
\item $\Box\triangle^{\alpha}\varphi$.
\end{enumerate}
\end{theorem}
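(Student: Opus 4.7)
The plan is to reduce Theorem~\ref{Epsilon_note_root_theorem} to the special case $\BoxSl_1$ treated in Theorem~\ref{Ite_Corollary}, using the monotonicity of iterated provability from Lemma~\ref{iterations_monotonicity_for_predicates}. The key observation is that the hypothesis $\thr{S}_n \subset \ISi{n+1}$ forces $\triangle := \Box_{\thr{S}_x, \fast_{\varepsilon_0}}$ to be dominated in $\PA$ by the shifted slow provability predicate $\BoxSl_1$, so the transfinite behaviour of $\triangle^{\alpha}$ can be sandwiched between $\BoxSl_1^{\alpha}$ on the upper side and the already-established upper bound Lemma~\ref{epsilon_0_upper_bound} on the lower side.

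First I would verify that $\PA \vdash \forall \varphi\,(\triangle\varphi \to \BoxSl_1\varphi)$. By Remark~\ref{shifting}, $\BoxSl_1$ is $\IDOE$-provably equivalent to $\Box_{\ISi{x+1}, \fast_{\varepsilon_0}}$. The hypothesis $\thr{S}_n \subset \ISi{n+1}$, being a recursive fact about finitely axiomatizable theories, yields a primitive recursive transformation of $\thr{S}_y$-proofs into $\ISi{y+1}$-proofs, and hence $\PA \vdash \forall y\forall \varphi\,(\Box_{\thr{S}_y}\varphi \to \Box_{\ISi{y+1}}\varphi)$; unfolding Definition~\ref{predseqf} then gives the pointwise implication. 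Since both $\triangle$ and $\BoxSl_1$ are $\Sigma_1$ provability predicates satisfying the Hilbert-Bernays-L\"ob conditions verifiably in $\IDOE$, Lemma~\ref{iterations_monotonicity_for_predicates} lifts this to the iterated versions:
\begin{equation*}
\PA \vdash \forall \alpha \in D \setminus \{0^D\}\,\forall \varphi\,(\triangle^{\alpha}\varphi \to \BoxSl_1^{\alpha}\varphi).
\end{equation*}

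Next I would close the cycle $(1) \Rightarrow (2) \Rightarrow (1)$ and $(1) \Rightarrow (3) \Rightarrow (1)$ inside $\PA$. For $(1) \Rightarrow (2)$, apply Lemma~\ref{epsilon_0_upper_bound} directly, whose hypotheses coincide with ours. For $(2) \Rightarrow (1)$, instantiate the monotonicity above at $\alpha = \varepsilon_0$ to obtain $\triangle^{\varepsilon_0}\varphi \to \BoxSl_1^{\varepsilon_0}\varphi$, then invoke Theorem~\ref{Ite_Corollary} to conclude $\Box\varphi$. For $(1) \Rightarrow (3)$, use Corollary~\ref{slowcon_inv} to obtain $\Box\varphi \to \Box\triangle\varphi$, and combine with Lemma~\ref{iteration_facts_1} item~\ref{iteration_facts_1_2} (which gives $\triangle\varphi \to \triangle^{\alpha}\varphi$ for $\alpha \in D \setminus \{0^D\}$) under an additional application of $\Box$. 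For $(3) \Rightarrow (1)$, apply $\Box$ to the monotonicity implication $\triangle^{\alpha}\varphi \to \BoxSl_1^{\alpha}\varphi$ and appeal to Theorem~\ref{Ite_Corollary} once more.

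There is no serious obstacle: the argument is essentially bookkeeping with the machinery already in place. The only point requiring attention is confirming that the set-theoretic inclusion $\thr{S}_n \subset \ISi{n+1}$ can indeed be formalised uniformly in $\PA$ as the provability implication $\Box_{\thr{S}_y}\varphi \to \Box_{\ISi{y+1}}\varphi$; this follows from the standard observation that a recursive sequence of recursive inclusions of finitely axiomatizable theories admits a uniform proof transformation representable in $\IDOE$.
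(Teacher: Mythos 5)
Your proposal is correct and takes essentially the same route as the paper, whose entire proof is the remark that the theorem follows from Theorem~\ref{Ite_Corollary} ``using Lemma~\ref{iterations_monotonicity_for_predicates} and Lemma~\ref{epsilon_0_upper_bound}''; your argument spells out precisely that reduction, namely $\triangle\varphi\to\BoxSl_1\varphi$ lifted to $\triangle^{\alpha}\varphi\to\BoxSl_1^{\alpha}\varphi$ by iterated monotonicity, combined with Lemma~\ref{epsilon_0_upper_bound} for the converse direction. The one point needing care --- that the inclusion $\thr{S}_n\subset\ISi{n+1}$ must be read as $\PA$-verifiable uniformly in $n$ so that $\Box_{\thr{S}_y}\varphi\to\Box_{\ISi{y+1}}\varphi$ holds internally --- is one you explicitly flag, and it matches the paper's own level of formalization.
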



\section{Square root of $\PA$-provability}\label{Sqrt_Section}
Recall that for a recursive function $\recfun{f}$ we denote by $\Box_{\recfun{f}}$ the slowdown of standard $\PA$-provability predicate by the function $\recfun{f}$:
$$\Box_{\recfun{f}}\varphi \mathrel{:{=}}\Box_0\varphi \mathop{\lor} \exists y\le x (\Box_y \varphi\mathop{\land} \conv{\recfun{f}(x)}).$$

We are going to prove the following theorem.
\begin{theorem}\label{Square_root_theorem}There exists a fast-growing recursive function $\recfun{r}$ such that
$$\PA\vdash\forall \varphi(\Box_{\recfun{r}}\Box_{\recfun{r}}\varphi\mathrel{\leftrightarrow}\Box \varphi).$$
\end{theorem}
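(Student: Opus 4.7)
The plan is to construct a total recursive function $\recfun{r}$ (not provably total in $\PA$) whose pointwise convergence provably encodes uniform reflection for initial segments of $\PA$. Concretely, the target is
\[\PA\vdash\forall n\,(\conv{\recfun{r}(n)}\to\DiamondI[1]_n\top),\]
which by Theorem \ref{one_con_fast_hier_equiv} amounts to $\PA\vdash\forall n\,(\conv{\recfun{r}(n)}\to\conv{\fast_{\omega_n}})$. The difficulty, and the main obstacle, is that $\conv{\recfun{r}(n)}$ is $\Sigma_1$ while $\DiamondI[1]_n\top$ is $\Pi_2$; a naive choice such as $\recfun{r}(n)=\fast_{\omega_n}(n)$ only witnesses convergence of $\fast_{\omega_n}$ at a single input, not totality. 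Accordingly, I would define $\recfun{r}$ by a self-referential recursion (via the recursion theorem) whose computation at $n$ is forced to produce an output sufficiently large that, inside $\PA$, a cut-extraction argument in the spirit of Theorem \ref{TI_on_cut} and Section \ref{Models_Section} yields the required totality of $\fast_{\omega_n}$ on an initial cut. One then verifies that $\recfun{r}$ is total in the standard model (so that $\Box_{\recfun{r}}$ axiomatises all of $\PA$), but cannot be provably total, because otherwise $\Box_{\recfun{r}}$ would coincide with $\Box$ inside $\PA$ and the desired equivalence would reduce to $\PA\vdash\Box\Box\varphi\to\Box\varphi$, contradicting Löb's theorem.

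Granting such $\recfun{r}$, the reverse direction $\PA\vdash\Box_{\recfun{r}}\Box_{\recfun{r}}\varphi\to\Box\varphi$ is the cleaner one. Unpacking the hypothesis gives $\exists m(\Box_m\Box_{\recfun{r}}\varphi\wedge\conv{\recfun{r}(m)})$; fix such $m$. From $\conv{\recfun{r}(m)}$ and the defining property of $\recfun{r}$ I obtain $\DiamondI[1]_m\top$, i.e.\ uniform $\Pi_2$-reflection for $\ISi{m}$. Since $\Box_{\recfun{r}}\varphi$ is $\Sigma_1$ (hence $\Pi_2$), this reflection applied to the sentence $\Box_{\recfun{r}}\varphi$ strips off $\Box_m$ to yield $\Box_{\recfun{r}}\varphi$; unfolding gives $\exists k(\Box_k\varphi\wedge\conv{\recfun{r}(k)})$, and in particular $\Box\varphi$.

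For the forward direction $\PA\vdash\Box\varphi\to\Box_{\recfun{r}}\Box_{\recfun{r}}\varphi$, the argument will mirror the bookkeeping of Lemma \ref{omega_upper_bound}, but collapsed to just two stages. Reasoning in $\PA$, from $\Box_n\varphi$ one uses $\Sigma_1$-completeness together with the $\ISi{m}$-provability of $\conv{\recfun{r}(n)}$ for a suitable $m$ (supplied by the construction of $\recfun{r}$) to derive $\Box_m(\Box_n\varphi\wedge\conv{\recfun{r}(n)})$, hence $\Box_m\Box_{\recfun{r}}\varphi$; combined with $\conv{\recfun{r}(m)}$ one concludes $\Box_{\recfun{r}}\Box_{\recfun{r}}\varphi$. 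The subtlety, again resting on the careful choice of $\recfun{r}$, is that the witness $m$ together with $\conv{\recfun{r}(m)}$ must be produced uniformly in $n$ inside $\PA$, without appealing to the (unavailable) provable totality of $\recfun{r}$; this is exactly what the self-referential construction is designed to supply, and is the place where the analogue of the two-directional arguments of Sections \ref{Converting_Section} and \ref{Models_Section} must be specialised to the square-root case.
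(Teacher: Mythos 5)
There is a genuine gap, and it sits at the foundation of your construction: the property you demand of $\recfun{r}$, namely $\PA\vdash\forall n\,(\conv{\recfun{r}(n)}\to\DiamondI[1]_{n}\top)$, is unattainable for \emph{any} total recursive $\recfun{r}$. Indeed, $\DiamondI[1]_{n}\top$ implies $\fofl{Con}(\ISi{n})$, so your target yields $\PA\vdash\forall n\,(\conv{\recfun{r}(n)}\to\lnot\Box_n\bot)$; together with $\PA\vdash\lnot\Box_0\bot$ (essential reflexivity) and the definition of $\Box_{\recfun{r}}$ this gives $\PA\vdash\Diamond_{\recfun{r}}\top$. But if $\recfun{r}$ is total, $\PAsl{\recfun{r}}$ has exactly the theorems of $\PA$, so $\PAsl{\recfun{r}}$ would prove its own consistency with respect to its standard $\Sigma_1$ numeration --- for which the Hilbert--Bernays--L\"ob conditions and hence L\"ob's theorem hold, as noted after Definition \ref{predseqf} --- contradicting the Second Incompleteness Theorem. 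The obstruction is therefore not the $\Sigma_1$/$\Pi_2$ mismatch you identify, and no self-referential refinement of the definition can remove it. Consequently your reverse direction, which strips $\Box_m$ off $\Box_{\recfun{r}}\varphi$ by applying the reflection extracted from $\conv{\recfun{r}(m)}$, cannot be carried out as described.

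The paper avoids this trap. It defines $\recfun{r}(n)=\fast_{\omega_{\recfun{l}(n)}}(n)$ with $\recfun{l}$ a slowly growing self-referential inverse, and only ever uses \emph{single-point} convergence facts such as $\Box_m\conv{\fast_{\omega_m}(n)}$ (Lemma \ref{Box_n_prov_conv}); your sketch of the forward direction is essentially right and matches Lemma \ref{sqrt_PA_box_imp}, with $m=\recfun{l}(n)$ supplying the uniform witness. For the hard direction the paper proves the contrapositive $\Diamond\varphi\to\Diamond\!_{\recfun{r}}\Diamond\!_{\recfun{r}}\varphi$ model-theoretically (Lemma \ref{sqrt_PA_con_imp}): inside a model of $\PA+\varphi$ in which $\Diamond\!_{\recfun{r}}\varphi$ fails at some nonstandard $u$, one constructs, for each $n$ with $\conv{\recfun{r}(n)}$, a cut $\model{J}_n\models\ISi{n}$ squeezed strictly between $\recfun{r}(u)$ and $\recfun{r}(u+1)$; in $\model{J}_n$ the theory $\PAsl{\recfun{r}}$ collapses to $\ISi{u}$ and $\Diamond\!_{\recfun{r}}\varphi$ holds, whence $\Diamond\!_n\Diamond\!_{\recfun{r}}\varphi$ for all relevant $n$ and so $\Diamond\!_{\recfun{r}}\Diamond\!_{\recfun{r}}\varphi$. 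The consistency of $\ISi{n}$ is thus obtained \emph{relative to a cut}, never as a $\PA$-provable consequence of $\conv{\recfun{r}(n)}$ in the ambient model --- exactly the distinction your plan elides. To repair your approach, this is the step to replace: substitute the cut-extraction argument (via Theorem \ref{ISi_on_segment} and the stepdown lemmas) for the impossible reflection property.
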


First we define the required function $\recfun{r}$ and auxiliary function $\recfun{l}$. The function $\recfun{r}$ will be a function that grows faster than any $\fast_{\omega_n}$, but considerably slower than $\fast_{\varepsilon_0}$. 
\begin{definition}
We define function $\recfun{l}$ by recursion. We give $\recfun{l}(n)$ under the assumptions that for all $m<n$ the values $\recfun{l}(m)$ are already defined:
$$\recfun{l}(n)=\max (\{0\}\cup \{m\mid 0<m<n\mbox{ and } \fast_{\omega_{\recfun{l}(m)}}(m)\le n\}).$$  
We define the function $\recfun{r}$ as following: $$\recfun{r}(n)=\fast_{\omega_{\recfun{l}(n)}}(n).$$
\end{definition}

\begin{lemma}[$\IDOE$] \label{l_g_properties} The following holds:
\begin{enumerate}
\item \label{l_g_properties_0}$\recfun{l}$ is total;
\item \label{l_g_properties_1}there exists $n $ such that $\recfun{l}(n )=1$;
\item \label{l_g_properties_2}function $\recfun{l}$ is monotone-nondecreasing;
\item \label{l_g_properties_3}for every $n $, we have $\conv{\recfun{r}(\recfun{l}(n ))}$;
\item \label{l_g_properties_4}for every $n $, we have $\recfun{r}(\recfun{l}(n ))\le n $ if $\recfun{l}(n )\ge 1$;
\item \label{l_g_properties_5}for every $n $ such that $\conv{\recfun{r}(n )}$, we have $\recfun{l}(\recfun{r}(n ))=n $.
\end{enumerate}
\end{lemma}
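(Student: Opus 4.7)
The whole lemma concerns the interplay between $\recfun{l}$ and $\recfun{r}$, and the unifying device is the defining set
\[
A_n := \{m : 0 < m < n \text{ and } \fast_{\omega_{\recfun{l}(m)}}(m) \le n\},
\]
so that $\recfun{l}(n) = \max(\{0\} \cup A_n)$. My first step is to observe that, given $\recfun{l}(0), \ldots, \recfun{l}(n{-}1)$, membership in $A_n$ is $\Delta_0$-decidable in $\IDOE$, since the clause $\fast_{\omega_{\recfun{l}(m)}}(m) \le n$ rewrites as the bounded existential $\exists y \le n\, \varphi_{\fast}(\omega_{\recfun{l}(m)}, m, y)$ over the graph of $\fast$. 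The recursion defining $\recfun{l}$ is therefore a $\Delta_0$ course-of-values recursion that $\IDOE$ formalises and proves total by induction on $n$, yielding item~(\ref{l_g_properties_0}); the maximum is well-defined because $0$ is always thrown into the set.

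Items~(\ref{l_g_properties_1}) and~(\ref{l_g_properties_2}) are then cheap. For~(\ref{l_g_properties_1}) I would compute on small inputs: $\recfun{l}(1) = \recfun{l}(2) = 0$ since $\fast_1(1) = 3 > 2$, and then $\fast_{\omega_0}(1) = 3 \le 3$ puts $1$ into $A_3$ while $\fast_1(2) = 5 > 3$ excludes $2$, so $\recfun{l}(3) = 1$. For~(\ref{l_g_properties_2}), the inclusion $A_{n_1} \subseteq A_{n_2}$ whenever $n_1 \le n_2$ is immediate from the defining clause, so the maxima respect~$\le$. Items~(\ref{l_g_properties_3}) and~(\ref{l_g_properties_4}) fall out together by unpacking definitions: if $m := \recfun{l}(n) \ge 1$, then $m \in A_n$ tells us simultaneously that $\fast_{\omega_{\recfun{l}(m)}}(m)$ converges (item~\ref{l_g_properties_3}) and is bounded by $n$ (item~\ref{l_g_properties_4}); the leftover case $\recfun{l}(n) = 0$ for~(\ref{l_g_properties_3}) reduces to $\recfun{r}(0) = \fast_1(0) = 1$.

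The main obstacle is item~(\ref{l_g_properties_5}). The case $n = 0$ is a direct computation ($\recfun{r}(0) = 1$ and $\recfun{l}(1) = 0$), so assume $n \ge 1$ and set $k := \recfun{r}(n) = \fast_{\omega_{\recfun{l}(n)}}(n)$, which is assumed to converge. For the lower bound $\recfun{l}(k) \ge n$, I would check that $n \in A_k$: the bound $n < k$ comes from Lemma~\ref{fasthier_IDOE_facts}(4), and $\fast_{\omega_{\recfun{l}(n)}}(n) \le k$ is trivial since both sides equal $k$. For the upper bound $\recfun{l}(k) \le n$, I would argue by contradiction: if some $m \in A_k$ had $m > n$, then by item~(\ref{l_g_properties_2}) one has $\recfun{l}(m) \ge \recfun{l}(n)$, and combining index-monotonicity of $\fast$ from Lemma~\ref{fasthier_IDOE_facts}(1) with argument-monotonicity from Lemma~\ref{fasthier_IDOE_facts}(2) yields
\[
\fast_{\omega_{\recfun{l}(m)}}(m) \ge \fast_{\omega_{\recfun{l}(n)}}(m) \ge \fast_{\omega_{\recfun{l}(n)}}(n+1) > \fast_{\omega_{\recfun{l}(n)}}(n) = k,
\]
contradicting $m \in A_k$. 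The delicate point here is the final strict inequality, which needs strict monotonicity of $\fast_\alpha$ in its argument; this is derivable from the facts assembled in Lemma~\ref{fasthier_IDOE_facts} by a short induction on $\alpha$ (using $\fast_\alpha(x) > x$ on the convergence domain), but is where the bookkeeping inside $\IDOE$ is the most fiddly.
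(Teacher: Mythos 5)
Your proof is correct and takes essentially the same approach as the paper: the paper's own proof of this lemma is a one-line appeal to the definitions of $\recfun{l}$, $\recfun{r}$ and Lemma~\ref{fasthier_IDOE_facts}, and your argument is exactly the straightforward unpacking intended, including the strict argument-monotonicity of $\fast_\alpha$, which the paper likewise invokes as ``easily provable'' in its proof of Lemma~\ref{g_monotonicity}. The one step you cite slightly loosely is the inequality $\fast_{\omega_{\recfun{l}(m)}}(m)\ge\fast_{\omega_{\recfun{l}(n)}}(m)$: via Lemma~\ref{fasthier_IDOE_facts}(1) it additionally requires $\omega_{\recfun{l}(m)}\stepdown{m}\omega_{\recfun{l}(n)}$, which follows by iterating Lemma~\ref{stepdown_exp} from the base case $\omega\stepdown{m}1$ --- the paper glosses over this in exactly the same way.
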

\begin{proof} Straightforward using definition of $\recfun{l}$, $\recfun{r}$  and Lemma \ref{fasthier_IDOE_facts}.
\end{proof}

\begin{lemma}[$\ISi{1}$] \label{g_monotonicity} For every $n $, if $\conv{\recfun{r}(n )}$ then for all $m <n $ we have $\conv{\recfun{r}(m )}$ and $\recfun{r}(m )<\recfun{r}(n )$.
\end{lemma}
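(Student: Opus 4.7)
My plan is to separate the assertion into the convergence claim $\conv{\recfun{r}(n)}\Rightarrow \conv{\recfun{r}(m)}$ and the strict inequality $\recfun{r}(m)<\recfun{r}(n)$, deriving the latter essentially for free once convergence is in place. Setting $a=\recfun{l}(n)$ and $b=\recfun{l}(m)$, monotonicity of $\recfun{l}$ (Lemma \ref{l_g_properties}, item \ref{l_g_properties_2}) gives $b\le a$, and the hypothesis $\conv{\fast_{\omega_a}(n)}$ combined with item 2 of Lemma \ref{fasthier_IDOE_facts} yields $\conv{\fast_{\omega_a}(m)}$ together with $\fast_{\omega_a}(m)\le\fast_{\omega_a}(n)$. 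From item 5 of that same lemma we obtain $\omega_a\stepdown{m}0$, so the problem reduces to showing $\omega_a\stepdown{m}\omega_b$; applying item 1 of Lemma \ref{fasthier_IDOE_facts} to this will give $\conv{\fast_{\omega_b}(m)}$, i.e.\ $\conv{\recfun{r}(m)}$, and $\fast_{\omega_b}(m)\le\fast_{\omega_a}(m)$, hence $\recfun{r}(m)\le\recfun{r}(n)$.

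The main obstacle is the auxiliary claim: whenever $b\le a$ and $\omega_a\stepdown{m}0$, also $\omega_a\stepdown{m}\omega_b$. I would inspect the unique $\stepdown{m}$-chain $\omega_a=\gamma_0>\gamma_1>\cdots>\gamma_r=0$ and show it cannot skip over $\omega_b$. It suffices to prove the monotonicity statement that whenever $\gamma>\omega_b$, the successor $\gamma'$ in the chain also satisfies $\gamma'\ge\omega_b$: because $\gamma_r=0<\omega_b$, some intermediate $\gamma_i$ must then equal $\omega_b$ exactly. The successor case $\gamma=\eta+1>\omega_b$ is immediate since $\eta\ge\omega_b$. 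For $\gamma$ a limit with Cantor normal form $\omega^{\alpha_0}+\cdots+\omega^{\alpha_k}$, I would argue by induction on $\gamma$ (equivalently on the finite CNF tree): if the leading term satisfies $\omega^{\alpha_0}\ge\omega_b$, then $\gamma[m]$ inherits that leading term; if $\omega^{\alpha_0}<\omega_b$, one checks directly that $\gamma<\omega_b$, contradicting the hypothesis; the remaining subcase $\gamma=\omega^{\alpha_0}$ with $\alpha_0$ a limit exponent $>\omega_{b-1}$ reduces to the analogous monotonicity statement applied to $\alpha_0$ and $\omega_{b-1}$, handled by the induction hypothesis. Since every ordinal below $\varepsilon_0$ has a finite CNF tree, this induction is available already in $\IDOE$.

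Finally, strict inequality follows from item \ref{l_g_properties_5} of Lemma \ref{l_g_properties}: from the now-established $\conv{\recfun{r}(m)}$ and $\conv{\recfun{r}(n)}$ we get $\recfun{l}(\recfun{r}(m))=m$ and $\recfun{l}(\recfun{r}(n))=n$, so $m\ne n$ forces $\recfun{r}(m)\ne\recfun{r}(n)$, and combined with $\recfun{r}(m)\le\recfun{r}(n)$ this gives the strict inequality.
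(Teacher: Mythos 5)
Your argument is correct and its skeleton matches the paper's own (very terse) proof: both pass from $\conv{\fast_{\omega_{\recfun{l}(n)}}(n)}$ to $\conv{\fast_{\omega_{\recfun{l}(n)}}(m)}$ by monotonicity in the argument (Lemma \ref{fasthier_IDOE_facts}, item 2) and then descend to $\fast_{\omega_{\recfun{l}(m)}}(m)$ via item 1 of that lemma, using $\recfun{l}(m)\le\recfun{l}(n)$. There are two genuine differences. First, the paper leaves the applicability of item 1 implicit: it never says why $\omega_{a}\stepdown{m}\omega_{b}$ (in your notation $a=\recfun{l}(n)$, $b=\recfun{l}(m)$). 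You supply this by showing the $\stepdown{m}$-chain from $\omega_{a}$ to $0$ cannot skip $\omega_{b}$; this is the real content of the lemma, it is of the same flavour as Lemma \ref{stepdown_aux3}, and writing it out is a genuine improvement. Second, for strictness the paper appeals to the ``easily provable fact'' that $\fast_{\alpha}(n)>\fast_{\alpha}(m)$ for $n>m$, whereas you derive $\recfun{r}(m)\ne\recfun{r}(n)$ from $\recfun{l}(\recfun{r}(k))=k$ (Lemma \ref{l_g_properties}, item \ref{l_g_properties_5}) and combine it with the weak inequality; this neatly avoids strengthening item 2 of Lemma \ref{fasthier_IDOE_facts}, at the cost of leaning on a property of $\recfun{l}$ whose proof the paper itself only sketches. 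One small repair is needed in your case analysis of the skipping claim: when $\gamma=\omega^{\alpha_0}$ is a single term with $\alpha_0=\delta+1$ a successor, $\gamma[m]=\omega^{\delta}\cdot(m+1)$ does \emph{not} inherit the leading term $\omega^{\alpha_0}$, so this subcase is not covered by your first clause and is not the limit-exponent subcase either. The conclusion still holds there, since $\omega^{\delta+1}>\omega_b=\omega^{\omega_{b-1}}$ forces $\delta\ge\omega_{b-1}$ and hence $\gamma[m]\ge\omega^{\delta}\ge\omega_b$, but it should be listed as a separate case.
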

\begin{proof}Follows from Lemma \ref{fasthier_IDOE_facts} Item 1 and easily provable  fact that if $\conv{\fast_{\alpha}(n)}$ and $n>m$ then $\fast_{\alpha}(n)>\fast_{\alpha}(m)$.\end{proof}

\begin{lemma}[$\ISi{1}$] \label{sqrt_PA_box_imp}For every arithmetical sentence $\varphi$, if we have $\Box \varphi$ then we have $ \Box_{\recfun{r}}\Box_{\recfun{r}}\varphi$. 
\end{lemma}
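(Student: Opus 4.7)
The plan is to exhibit, for any $n$ with $\Box_n\varphi$, a single index $x_1 := \recfun{l}(n)$ that serves simultaneously as the witness for the outer $\Box_{\recfun{r}}$ in $\Box_{\recfun{r}}\Box_{\recfun{r}}\varphi$ and as the induction level inside which $\Box_{\recfun{r}}\varphi$ can be established. The design of the pair $(\recfun{l},\recfun{r})$ is tailored for exactly this: on the one hand item~\ref{l_g_properties_3} of Lemma~\ref{l_g_properties} gives $\conv{\recfun{r}(\recfun{l}(n))}$ for free, and on the other hand the defining equation $\recfun{r}(n)=\fast_{\omega_{\recfun{l}(n)}}(n)$ turns $\conv{\recfun{r}(n)}$, for $k:=\recfun{l}(n)$, into a $\Sigma_1$-sentence about convergence of $\fast_{\omega_k}$ at argument $n$, which Lemma~\ref{Box_n_prov_conv} already asserts to be provable in $\ISi{k}$.

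Carrying this out in $\ISi{1}$, I would fix $n$ with $\Box_n\varphi$, set $k:=\recfun{l}(n)$, and establish separately the two conjuncts $\conv{\recfun{r}(k)}$ and $\Box_k\Box_{\recfun{r}}\varphi$. The first is immediate from Lemma~\ref{l_g_properties}. For the second, I show that $\ISi{k}$ proves the $\Sigma_1$-sentence $\exists y(\Box_y\varphi\land \conv{\recfun{r}(y)})$ by taking $y=n$ as the internal witness. Provable $\Sigma_1$-completeness converts the given $\Sigma_1$-truth $\Box_n\varphi$ into $\Box_0\Box_n\varphi$ and hence $\Box_k\Box_n\varphi$. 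Applying Lemma~\ref{Box_n_prov_conv} with its two variable slots instantiated to our $k$ and $n$ yields $\Box_k\conv{\fast_{\omega_k}(n)}$, and the $\Sigma_1$-true equality $\recfun{l}(n)=k$, which $\IDOE$ verifies since $\recfun{l}$ is primitive recursive, gets transported inside $\Box_k$ to identify this with $\Box_k\conv{\recfun{r}(n)}$. Conjoining under $\Box_k$ and existentially quantifying over $y$ produces $\Box_k\Box_{\recfun{r}}\varphi$, and pairing with $\conv{\recfun{r}(k)}$ yields the desired $\Box_{\recfun{r}}\Box_{\recfun{r}}\varphi$.

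The main delicate point is the external/internal bookkeeping with the dot-notation: $n$ and $k$ live at the outer $\ISi{1}$ level but get numeralized inside $\Box_k$, so one must verify that the outer equations $k=\recfun{l}(n)$ and $\recfun{r}(n)=\fast_{\omega_k}(n)$ reflect faithfully inside $\Box_k$. Since both are $\Sigma_1$-true statements about primitive recursive terms, they are $\IDOE$-provable and hence visible inside $\Box_k$, so no transfinite induction or reflection is required. The whole argument uses only provable $\Sigma_1$-completeness of each $\Box_k$ together with the content of Lemmas~\ref{Box_n_prov_conv} and~\ref{l_g_properties}, which is exactly what the definitions of $\recfun{l}$ and $\recfun{r}$ were designed to make available.
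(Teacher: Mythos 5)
Your proposal is correct and follows essentially the same route as the paper's proof: fix $n$ with $\Box_n\varphi$, set $k=\recfun{l}(n)$, use Lemma \ref{Box_n_prov_conv} together with the definition of $\recfun{r}$ to get $\Box_k\conv{\recfun{r}(n)}$, combine with $\Box_k\Box_n\varphi$ (provable $\Sigma_1$-completeness) to obtain $\Box_k\Box_{\recfun{r}}\varphi$, and finish with $\conv{\recfun{r}(k)}$ from Lemma \ref{l_g_properties}. The only cosmetic difference is that you exhibit the internal witness $y=n$ for $\exists y(\Box_y\varphi\land\conv{\recfun{r}(y)})$ directly, where the paper phrases the same step as $\Box_k\forall\psi(\Box_n\psi\to\Box_{\recfun{r}}\psi)$.
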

\begin{proof} Suppose we have $\Box \varphi$. Then there is a number $n $ such that $\Box_n\varphi$. Let  $m =\recfun{l}(n)$. By Lemma \ref{Box_n_prov_conv} we have $\Box_m\conv{\fast_{\omega_{m}}(n)}$. Hence directly from the definition of $\recfun{r}$ we have $\Box_m\conv{\recfun{r}(n)}$. Thus $\ISi{m}$ proves that $\ISi{n} \subseteq \PAsl{\recfun{r}}$, i.e. $\Box_m \forall \psi ( \Box_n \psi\to \Box_{\recfun{r}}\psi)$. By $\Sigma_1$-completeness of $\ISi{m} $ we have $\Box_m \Box_n\varphi$. Combining conclusions of the last two sentences, we obtain  $\Box_m\Box_{\recfun{r}}\varphi$. By Lemma \ref{l_g_properties} Item \ref{l_g_properties_3} we have $\conv{\recfun{r}(m )}$ and therefore $\ISi{m }\subseteq \PAsl{\recfun{r}}$. Thus $\Box_{\recfun{r}}\Box_{\recfun{r}}\varphi$.
\end{proof}

\begin{theorem}$\mbox{\cite[Theorem~3.7]{FriRatWei13}}$ \label{ISi_on_segment} Suppose $\model{M}$ is a non-standard model of the theory  $\IDOE$,  $n\ge 1$ is a standard integer, and $a,b,e\in\model{M}$ are non-standard integers such that $\model{M}\models\fast_{\omega_{n-1}^{e}}(a)=b$.  Then there is a cut $\model{J}$ of $\model{M}$ such that $a<\model{J}<b$ and $\model{J}$ is a model of $\ISi{n}$.
\end{theorem}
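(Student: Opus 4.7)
The plan is to use an indicator-style cut construction, exploiting the fact that the hypothesis $\model{M}\models\fast_{\omega_{n-1}^{e}}(a)=b$ with nonstandard $e$ provides an enormous amount of ``room'' between $a$ and $b$: in particular, for every standard $k$, many iterations of $\fast_{\omega_{n-1}^{k}}$ starting at $a$ remain bounded by $b$. Concretely, I would define
$$\model{J} := \{x \in \model{M} : \model{M}\models \fast_{\omega_{n-1}^{k}}(x) < b \text{ for every standard } k\in\omega\}.$$
That $a\in\model{J}$ is immediate from the nonstandardness of $e$: for any fixed standard $k$ we have $e>k+1$, so using the monotonicity items of Lemma~\ref{fasthier_IDOE_facts} applied inside $\model{M}$, one gets $\fast_{\omega_{n-1}^{k}}(a) < \fast_{\omega_{n-1}^{e}}(a)=b$. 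Clearly $b\notin\model{J}$, so $a<\model{J}<b$.

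Next I would verify that $\model{J}$ is a cut closed under the arithmetical operations and (crucially) under each $\fast_{\omega_{n-1}^{k}}$ for standard $k$. Closure under $+$ and $\times$ follows from trivial majorizations combined with closure under $\fast_{\omega_{n-1}^{1}}$. For closure under $\fast_{\omega_{n-1}^{j}}$ with $j$ standard, I would use Lemmas~\ref{stepdown_exp}, \ref{stepdown_aux1}, and \ref{stepdown_aux2} to show, in $\model{M}$, that for all standard $j,k$ there is a standard $m$ with $\omega_{n-1}^{m}\stepdown{x}\omega_{n-1}^{k}$ for $x\ge\fast_{\omega_{n-1}^{j}}(a)$, so that $\fast_{\omega_{n-1}^{k}}\!\bigl(\fast_{\omega_{n-1}^{j}}(x)\bigr)\le\fast_{\omega_{n-1}^{m}}(x) < b$; hence $\fast_{\omega_{n-1}^{j}}(x)\in\model{J}$.

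The main obstacle will be showing that $\model{J}$ actually satisfies $\ISi{n}$. The standard route, going back to Paris' indicator method, is to recognize that closure of an $\IDOE$-cut under $\fast_{\omega_{n-1}^{k}}$ for every standard $k$ is enough to guarantee $\Sigma_{n}$-collection, and hence $\ISi{n}$ over $\IDOE$. I would formalize this by using the partial truth predicate $\fofl{True}_{\Sigma_{n}}$ available in $\IDOE$ (Section~\ref{meta_IDOE}) to internalize $\Sigma_{n}$-bounded search inside $\model{M}$, and then a pigeonhole/density argument converts a $\Sigma_{n}$-unbounded search below some $z\in\model{J}$ into a $\Delta_{0}$-bounded search inside the interval $[z,\fast_{\omega_{n-1}^{k}}(z)]$ for a suitable standard $k$, showing that any such search must already be captured inside $\model{J}$.

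I would expect the pigeonhole step in the last paragraph to be the genuinely technical part; the first two paragraphs are bookkeeping with the stepdown calculus already developed in Lemmas~\ref{stepdown_aux1}--\ref{stepdown_aux4}. Since the related Theorem~\ref{TI_on_cut} and its formalized version in Remark~\ref{TI_on_cut_form} rest on the same machinery, my intention would be to parallel that construction, simplifying because the target theory $\ISi{n}$ demands only finitely much induction rather than the transfinite $\TI{\Pi_{n}}{\omega^{\omega^{\alpha}}}$.
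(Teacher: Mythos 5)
The paper itself offers no proof of this statement: it is imported verbatim from \cite[Theorem~3.7]{FriRatWei13}, and inside the present paper the intended justification is a two-line derivation from results already quoted --- apply Theorem \ref{TI_on_cut} with $\alpha=0$ and $c=e$ (so that $\omega_{n-1}^{\omega^{0}\cdot e}=\omega_{n-1}^{e}$) to obtain a cut $\model{J}$ with $a<\model{J}<b$ and $\model{J}\models\IDOE+\TI{\Pi_n}{\omega}$, and then use the fact, already invoked in the proof of Lemma \ref{Box_n_prov_conv}, that $\TI{\Pi_n}{\omega}$ is equivalent to $\Sigma_n$-induction over $\IDO$. Your proposal does not take this route, and the route it does take has a genuine gap at exactly the step you defer as ``the genuinely technical part''.

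Two concrete problems. First, the reduction is logically off: $\Sigma_n$-collection does not imply $\ISi{n}$; by Paris--Kirby one has $\ISi{n}\vdash\mathsf{B}\Sigma_{n}\vdash\ISi{n-1}$ with both implications strict, so even a successful verification of $\Sigma_n$-collection on $\model{J}$ would not yield $\model{J}\models\ISi{n}$. Second, and more fundamentally, closure of a cut under the functions $\fast_{\omega_{n-1}^{k}}$ for standard $k$ does not give collection or induction on the cut. The witnesses whose existence $\model{J}$ asserts for a $\Sigma_n$-formula are witnesses for $\model{J}$'s \emph{own} satisfaction relation, whose unbounded quantifiers range over all of $\model{J}$; since $\model{J}$ is not an element of $\model{M}$, this is not a $\Delta_0$-bounded search over any interval $[z,\fast_{\omega_{n-1}^{k}}(z)]$ of $\model{M}$, and no pigeonhole argument converts one into the other without already assuming something like collection for $\model{J}$. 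Whether a cut models $\ISi{n}$ is a structural property of how it sits in $\model{M}$ (semi-regularity and its higher analogues, equivalently Ketonen--Solovay largeness of the intervals used to build it), not a property of which standard-indexed functions it is closed under. Note also that your $\model{J}$ is the \emph{maximal} cut below $b$ closed under the $\fast_{\omega_{n-1}^{k}}$, whereas the cuts actually produced in \cite{Som95} and \cite{FriRatWei13} are suprema of the explicit sequences $\fast_{\omega_{n-1}^{i}}(a)$, $i\in\omega$, and the proof that \emph{those} satisfy the required induction is the technical heart of Sommer's Section 5 --- it is not recoverable by a density remark. (A smaller, fixable point: to get $\fast_{\omega_{n-1}^{k}}(a)<b$ you need $\omega_{n-1}^{e}\stepdown{a}\omega_{n-1}^{k}$, which means iterating Lemma \ref{stepdown_aux1} a nonstandard number $e-k$ of times inside $\model{M}$; since the stepdown lemmas are stated over $\PA$ and $\model{M}$ is only assumed to model $\IDOE$, this needs separate care.)
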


\begin{remark} Theorem \ref{ISi_on_segment} can be formalized in $\PA$ in the same fashion as Theorem \ref{TI_on_cut} in Remark \ref{TI_on_cut_form}. We assume that the model $\model{M}$ is given by some fixed tuple of arithmetical formulas, possibly with additional parameters. Then there are arithmetical formulas $\varphi(x,n,a,b,e)$ and $\psi(y,n,a,b,e)$  such that $\PA$ proves that if $n,a,b,e$ satisfy the conditions of Theorem \ref{ISi_on_segment} then the set of all $x$ from $\model{M}$ for which $\varphi(x,n,a,b,e)$ is a cut $\model{I}$, the formula $\psi(y,n,a,b,e)$ gives a $\Pi_{n}$ partial satisfaction relation for $\model{I}$, and the cut $\model{I}$ satisfies the conclusion of Theorem \ref{ISi_on_segment}.
\end{remark}

\begin{lemma}[$\PA$] \label{sqrt_PA_con_imp} For every arithmetical sentence $\varphi$, if  $\Diamond\varphi$ then $\Diamond\!_{\recfun{r}}\Diamond\!_{\recfun{r}}\varphi$.
\end{lemma}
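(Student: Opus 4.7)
The plan is to mirror the structure of Lemma \ref{epsilon_0_root_lower_bound}: working inside $\PA+\Diamond\varphi$, I will exhibit a cut of a model of $\PA+\varphi$ in which the $\Pi_1$-sentence $\Diamond_\recfun{r}\Diamond_\recfun{r}\varphi$ holds; $\Pi_1$-truth in any model of $\IDOE$ containing the standard numbers as an initial segment passes to the outside world, which suffices. As in the cited proof I split into cases on $\BoxI[1]\bot$ versus $\DiamondI[1]\top$; the $\DiamondI[1]\top$ case reduces to the other via Löb's Theorem for $\DiamondI[1]$ applied to the $\Pi_1$-conclusion, in exactly the same manner as in Lemma \ref{epsilon_0_root_lower_bound}. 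I therefore focus on the case $\BoxI[1]\bot$, in which Theorem \ref{one_con_fast_hier_equiv} gives $\dive{\fast_{\varepsilon_0}}$, and fix a model $\model{M}\models\PA+\varphi$ by the formalized Completeness Theorem.

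Essential reflexivity of $\PA+\varphi$ yields $\model{M}\models\Diamond_n\varphi$ for every standard $n$; since $\Diamond_n\varphi$ is $\Pi_1$ in $n$, an overspill argument inside $\model{M}$ supplies a nonstandard $n\in\model{M}$ at which, simultaneously, $\model{M}\models\Diamond_n\varphi$, $\model{M}\models\conv{\recfun{r}(n+1)}$, and $\recfun{l}(n+1)$ is nonstandard (the unboundedness of $\recfun{l}$ coming from Lemma \ref{l_g_properties}). Setting $b=\recfun{r}(n+1)=\fast_{\omega_{\recfun{l}(n+1)-1}^{n+2}}(n+1)$, I will apply the formalized version of Theorem \ref{ISi_on_segment} with the nonstandard parameter $\recfun{l}(n+1)$ to obtain a cut $\model{K}$ of $\model{M}$ with $n+1<\model{K}<b$ carrying a $\Pi_{\recfun{l}(n+1)}$ partial satisfaction relation. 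Since $\recfun{l}(n+1)$ is nonstandard, $\model{K}$ satisfies every standard $\ISi{m}$, so $\model{K}\models\PA$.

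To verify $\model{K}\models\Diamond_\recfun{r}\Diamond_\recfun{r}\varphi$ I reason as follows. By construction $\model{K}\models\dive{\recfun{r}(n+1)}$, and Lemma \ref{g_monotonicity} then forces $\recfun{r}$ to diverge in $\model{K}$ on every $m\ge n+1$; choosing the cut so that $\recfun{r}(n)\in\model{K}$ (from the quantitative structure of Theorem \ref{ISi_on_segment}) makes the convergence domain of $\recfun{r}$ in $\model{K}$ exactly $\{0,\ldots,n\}$. Because $\Diamond_n\varphi$ is $\Pi_1$ with parameter $n\in\model{K}$ and holds in $\model{M}$, it passes to $\model{K}$, and together with $\ISi{x}\subseteq\ISi{n}$ for $x\le n$ this yields $\model{K}\models\Diamond_\recfun{r}\varphi$. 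Next, the partial $\Sigma_1$-reflection principle $\forall\sigma\in\Sigma_1(\Box_n\sigma\to\fofl{True}_{\Sigma_1}(\sigma))$ is a theorem of $\ISi{n+1}$, hence holds in $\model{K}\models\PA$; applying it to the $\Sigma_1$-sentence $\Box_\recfun{r}\lnot\varphi$ gives $\model{K}\models\Diamond_\recfun{r}\varphi\to\Diamond_n\Diamond_\recfun{r}\varphi$, and the inclusion $\ISi{y}\subseteq\ISi{n}$ propagates the conclusion to every $y\le n$. Combined with the divergence of $\recfun{r}$ above $n$, this gives $\model{K}\models\Diamond_\recfun{r}\Diamond_\recfun{r}\varphi$, as required.

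The main obstacle I expect is the quantitative bookkeeping needed to ensure that the cut $\model{K}$ from Theorem \ref{ISi_on_segment} contains $\recfun{r}(n)$, so that $n$ genuinely acts as the maximal convergence index of $\recfun{r}$ in $\model{K}$; without this the internal equivalence between $\Box_\recfun{r}$ and $\Box_n$ collapses. A secondary subtlety is that the use of $\Sigma_1$-reflection for $\ISi{n}$ at the nonstandard $n$ relies on its uniform-in-$\sigma$ formulation as a single $\ISi{n+1}$-theorem via the partial truth predicate $\fofl{True}_{\Sigma_1}$, rather than as a schema in standard $\sigma$.
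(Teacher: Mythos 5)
Your overall geometry is right --- work in a model $\model{M}$ of $\PA+\varphi$, squeeze a cut between two consecutive values $\recfun{r}(u)$ and $\recfun{r}(u+1)$ so that inside the cut the convergence domain of $\recfun{r}$ is exactly $[0,u]$, and transfer the $\Pi_1$ conclusion back out --- but the central device you use to finish is not available. You propose to apply the formalized Theorem \ref{ISi_on_segment} with the complexity parameter set to the \emph{nonstandard} value $\recfun{l}(n+1)$, obtaining a single cut $\model{K}$ with a ``$\Pi_{\recfun{l}(n+1)}$ partial satisfaction relation'' and hence $\model{K}\models\PA$. This is a type error: in the formalization (see the Remark after Theorem \ref{ISi_on_segment}) the complexity parameter ranges over numbers of the ambient $\PA$-reasoning, whereas $\recfun{l}(n+1)$ is an element of the internal model $\model{M}$ lying above all ambient numbers; a partial satisfaction relation of nonstandard complexity would have to decide nonstandard formulas and is not provided by (nor extractable from) the cited theorem. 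Consequently ``$\model{K}\models\PA$'' is not something the ambient reasoning can even formulate here, and the next step --- deducing $\model{K}\models\Diamond_n\Diamond_{\recfun{r}}\varphi$ from ``$\fofl{RFN}_{\Sigma_1}(\ISi{n})$ is a theorem of $\ISi{n+1}\subseteq\PA$'' at the nonstandard $n$ --- fails for the same reason: a cut satisfying every \emph{standard} fragment $\ISi{m}$ need not satisfy reflection, or even consistency, of $\ISi{n}$ for nonstandard $n$ (models of $\PA+\lnot\fofl{Con}_{\PA}$ already show this). Since $\Diamond_{\recfun{r}}\Diamond_{\recfun{r}}\varphi$ unwinds to $\Diamond_y\Diamond_{\recfun{r}}\varphi$ for all $y$ up to and including the nonstandard boundary, this is exactly the point that must be secured, so the gap is fatal rather than cosmetic.

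The paper avoids the single-cut strategy altogether: for \emph{each ambient} $m$ with $\conv{\recfun{r}(m)}$ it applies Theorem \ref{ISi_on_segment} with the standard parameter $m$ to produce a separate cut $\model{J}_m$ with $\recfun{r}(u)<\model{J}_m<\recfun{r}(u+1)$, carrying a $\Pi_m$ partial satisfaction relation and satisfying $\ISi{m}$; inside each $\model{J}_m$ one has $\PAsl{\recfun{r}}=\ISi{u}$ and the $\Pi_1$ fact $\Diamond_u\varphi$ persists, giving $\model{J}_m\models\Diamond_{\recfun{r}}\varphi$, and cut elimination in the \emph{ambient} world then yields $\Diamond_m\Diamond_{\recfun{r}}\varphi$. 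These facts, for all ambient $m$ with $\conv{\recfun{r}(m)}$, are precisely what $\Diamond_{\recfun{r}}\Diamond_{\recfun{r}}\varphi$ asserts, so no consistency statement with a nonstandard index is ever needed. Two further remarks: your opening case split on $\BoxI[1]\bot$ versus $\DiamondI[1]\top$ is imported from Lemma \ref{epsilon_0_root_lower_bound} and plays no role here --- the relevant dichotomy is whether $\model{M}\models\Diamond_{\recfun{r}}\varphi$ (if it does, a full model of $\PAsl{\recfun{r}}+\Diamond_{\recfun{r}}\varphi$ is already at hand) --- and the ``quantitative bookkeeping'' you defer, namely $\fast_{\omega^{u}_{m-1}}(\recfun{r}(u))\le\recfun{r}(u+1)$, is the substantive half of the paper's argument, carried out via the stepdown Lemmas \ref{stepdown_exp}--\ref{stepdown_aux4}.
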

\begin{proof} Let us consider a model $\model{M}$ of $\PA+\varphi$. If $\model{M}\models \Diamond\!_{\recfun{r}}\varphi$ then we are already done. So we assume that $\model{M}\not \models \Diamond\!_{\recfun{r}}\varphi$. Thus there exist  $u \in\model{M}$ such that $\model{M}\models \Diamond\!_u\varphi\land \lnot \Diamond\!_{u+1}\varphi\land \conv{\recfun{r}(u +1)}$.  From Lemma \ref{g_monotonicity} it follows that $\model{M}\models \conv{\recfun{r}(u )}$ and $\recfun{r}(u )<\recfun{r}(u +1)$.  Note that $u $ is a non-standard number because $\PA$ is an essentially reflexive theory and thus for every standard $m $ we have $\model{M}\models \Diamond\!_{m}\varphi$.

We are going to prove that for every $n $ such that $\conv{\recfun{r}(n )}$ we have a cut $\model{J}_n$ of $\model{M}$ such that $\recfun{r}(u )<\model{J}_n<\recfun{r}(u +1)$,  the cut $\model{J}_n$ have $\Pi_n$ satisfaction relation, and $\model{J}_n\models \ISi{n }$. If we prove our claim then for every  cut $\model{J}_n$, we will have $\PAsl{\recfun{r}}=\ISi{u }$ within it and hence  $\model{J}_n\models \Diamond_{\recfun{r}}\varphi$.  Since every cut $\model{J}_n$ that we will construct  will have $\Pi_n$ partial satisfaction relation and will be a model of $\ISi{n}$, we can conclude that $\Diamond_n\Diamond_{\recfun{r}}\varphi$, for every $n$ such that $\conv{\recfun{r}(n)}$. Hence we will have $\Diamond_{\recfun{r}}\Diamond_{\recfun{r}}\varphi$, contradiction.

Now let us prove the claim.  We take $a,b,e\in\model{M}$: 
$$\model{M}\models a= \recfun{r}(u )\mbox{,$\,\,\,\,$} \model{M}\models b= \fast_{\omega^{u }_{n -1}}(a)\mbox{, and$\,\,\,\,$} \model{M}\models e=u $$ and apply Theorem \ref{ISi_on_segment} to construct the required cut. Now we need to check that we indeed have $\recfun{r}(u )<\model{J}_n<\recfun{r}(u +1)$. For this, it is enough to show that $$\model{M}\models  \fast_{\omega^{u }_{n -1}}(\recfun{r}(u))\le \recfun{r}(u+1).$$  Since $u$ is a non-standard number, we have $\model{M}\models\recfun{r}(n )<u $.  From Lemma \ref{l_g_properties} item \ref{l_g_properties_2}. it follows that $\model{M}\models\recfun{l}(u )\ge \recfun{l}(\recfun{r}(n ))$.  From Lemma \ref{l_g_properties} item \ref{l_g_properties_5}.   it follows that $\recfun{l}(\recfun{r}(n ))=n $.  Thus $\model{M}\models\recfun{l}(u +1)\ge \recfun{l}(u )\ge n $.  We see that $$\model{M}\models \recfun{r}(u )=\fast_{\omega_{\recfun{l}(u )}}(u )=\fast_{\omega^{u +1}_{\recfun{l}(u )-1}}(u )$$ and  $$\model{M}\models \recfun{r}(u+1 )=\fast_{\omega_{\recfun{l}(u +1)}}(u +1)=\fast_{\omega^{u +2}_{\recfun{l}(u +1)-1}}(u +1).$$ 

From Lemma \ref{fasthier_IDOE_facts} it follows that $\model{M}\models \omega^{u+2}_{\recfun{l}(u+1)-1}\stepdown{u+1} 0$. Now we use Lemma \ref{stepdown_aux3}  and Lemma \ref{stepdown_aux2} and deduce that $$\model{M}\models \omega^{u +2}_{\recfun{l}(u +1)-1}\stepdown{u+1} \omega^{u +2}_{\recfun{l}(u)-1} \stepdown{u+1} \omega^{u +1}_{\recfun{l}(u)-1}+1.$$ Therefore from  Lemma \ref{fasthier_IDOE_facts} it follows that $$\model{M}\models \fast_{\omega^{u +2}_{\recfun{l}(u +1)-1}}(u +1)\ge \fast_{\omega^{u +1}_{\recfun{l}(u)-1}+1}(u+1)\ge \fast_{\omega^{u +1}_{\recfun{l}(u)-1}+1}(u).$$ Now using the fact that $\model{M}\models\conv{\fast_{\omega^{u +1}_{\recfun{l}(u)-1}+1}(u)}$ and   Lemma \ref{fasthier_IDOE_facts} we conclude that $$\model{M}\models \omega^{u +1}_{\recfun{l}(u)-1}+1\stepdown{u} 0.$$ Next using Corollary  \ref{stepdown_aux3} and Lemma \ref{stepdown_aux3} we deduce that $$\model{M} \models \omega^{u +1}_{\recfun{l}(u) -1}\stepdown{u}\omega^{u }_{\recfun{l}(u) -1}\stepdown{u}\omega^{u }_{n -1}.$$ From Lemma \ref{fasthier_IDOE_facts} and Lemma \ref{stepdown_aux4} it follows that that $$\begin{aligned}\model{M}\models \fast_{\omega^{u +1}_{\recfun{l}(u)-1}+1}(u)= \fast_{\omega^{u +1}_{\recfun{l}(u)-1}}^{u+1}(u)\ge  \fast_{\omega^{u +1}_{\recfun{l}(u)-1}}^{2}(u)\ge  \fast_{\omega^{u }_{n -1}}(\fast_{\omega^{u +1}_{\recfun{l}(u )-1}}(u )).\end{aligned}$$  Therefore $$\model{M}\models  \fast_{\omega^{u }_{n -1}}(\recfun{r}(u))=\fast_{\omega^{u }_{n -1}}(\fast_{\omega^{u +1}_{\recfun{l}(u )-1}}(u ))\le\fast_{\omega^{u +2}_{\recfun{l}(u +1)-1}}(u +1)=\recfun{r}(u+1).$$ Thus we have obtained the required cut $\model{J}_n$.\end{proof}

From Lemma \ref{sqrt_PA_box_imp} and Lemma \ref{sqrt_PA_con_imp} it follows that Theorem \ref{Square_root_theorem} holds.


\section{Bimodal provability logics}\label{ProvLogic_Section}
We determine the joint provability logic of a wide class of pairs of provability predicates, including ordinary provability together with any of the slow provability predicates $\BoxSl_{z}$. 
This result was obtained in cooperation with Volodya Shavrukov. 
The relevant bimodal system is $\GLT$, or Lindstr\"om logic, 
first studied by 
Lindstr\"om (\cite{Lin06}) due to its relation to Parikh's rule. The latter allows one to infer $\varphi$ from $\fofl{Pr}_{\PA}(\varphi)$. 
Since Parikh's rule is admissible in $\PA$, adding it to $\PA$ does not yield new theorems. As shown in \cite{Par71}, it does yield speed-up, meaning that some theorems have much shorter proofs when Parikh's rule is allowed. The equivalence of 
Parikh provability and ordinary provability is however not verifiable in $\PA$.

In Section \ref{9squareroot} we establish the joint provability logic of ordinary provability together with square root provability.

\subsection{The system $\GLT$}
We work with the languages $\mathcal{L}_{\Box}$ and $\mathcal{L}_{\Box\triangle}$ of propositional unimodal and bimodal logic respectively. The symbols 
$\Box$ and $\triangle$ are thus used for the modalities, not as abbreviations for arithmetical formulas as until now. As before, $\Diamond A$ and $\triangleDual A$ are written as shorthand for $\lnot \Box\lnot A$ and 
$\lnot \triangle\lnot A$ respectively. 

\begin{definition}
The axiom schemata of $\GL$ include all propositional tautologies in the language $\mathcal{L}_{\Box}$, and furthermore:
\begin{flalign*}
\mathsf{(K)}\;\; & \Box(A\to B) \to (\Box A\to \Box B)& \\
\mathsf{(L)}\;\;& \Box(\Box A\to A)\to \Box A 
\end{flalign*}
The inference rules of $\GL$ are modus ponens and necessitation: 
if $\vdash_{\GL} A$ then $\vdash_{\GL} \Box A .$
\end{definition}
\begin{lemma}
$\vdash_{\GL}\Box A\to \Box \Box A$
\end{lemma}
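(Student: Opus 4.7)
The plan is to derive this $4$-schema from Löb's axiom $(L)$ and $(K)$ by applying $(L)$ with the substitution $A \mapsto A \wedge \Box A$. Doing so yields
$$\Box\bigl(\Box(A \wedge \Box A) \to (A \wedge \Box A)\bigr) \to \Box(A \wedge \Box A).$$
Using $(K)$, one has $\Box(A \wedge \Box A) \leftrightarrow \Box A \wedge \Box\Box A$, so the conclusion becomes $\Box A \wedge \Box\Box A$, which already contains the target $\Box\Box A$. For the antecedent, the implication $(\Box A \wedge \Box\Box A) \to (A \wedge \Box A)$ is propositionally equivalent to $(\Box A \wedge \Box\Box A) \to A$, since the conjunct $\Box A$ on the right of the implication is already guaranteed by the antecedent. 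Boxing this propositional equivalence via necessitation and $(K)$ simplifies the Löb instance to
$$\Box\bigl((\Box A \wedge \Box\Box A) \to A\bigr) \to \Box A \wedge \Box\Box A.$$

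It remains to derive $\Box A \to \Box\bigl((\Box A \wedge \Box\Box A) \to A\bigr)$. This comes from the propositional tautology $A \to (B \to A)$ with $B := \Box A \wedge \Box\Box A$, to which necessitation and $(K)$ apply to give exactly the required implication. Chaining with the simplified Löb instance yields $\Box A \to \Box A \wedge \Box\Box A$, and in particular $\Box A \to \Box\Box A$.

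The main obstacle is identifying the correct substitution for Löb's axiom. Substituting $A$ itself, or $\Box A$, into $(L)$ produces only the trivialities $\Box A \to \Box A$ or $\Box\Box A \to \Box\Box A$: necessitation of such instances introduces a $\Box$ on both sides of the implication, and the resulting chains close up trivially. The choice $A \wedge \Box A$ breaks this symmetry --- the $(K)$-expansion of its $\Box$ produces exactly the target $\Box\Box A$ in the conclusion, while the corresponding Löb antecedent can be discharged from $\Box A$ via the elementary tautology $A \to (B \to A)$.
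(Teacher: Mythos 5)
Your derivation is correct: applying Löb's axiom to $A \wedge \Box A$, using the $\mathsf{K}$-provable equivalence $\Box(A\wedge\Box A)\leftrightarrow \Box A\wedge\Box\Box A$, and discharging the antecedent via the tautology $A\to(B\to A)$ is exactly the standard argument. The paper gives no proof of its own here, merely citing Boolos, and your write-up is precisely the proof found in that reference.
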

\begin{proof}
See \cite[Theorem 1.18]{Boo93}. 
\end{proof}

A relation $\prec$ on a set $W$ is said to be \emph{conversely well-founded} if for every $S\subseteq W$ with $S\neq \emptyset$, there is some $a\in W$ such that $a\nprec b$ for all $b\in S$. A conversely well-founded relation is, in particular, irreflexive. 
We write $a \preceq b$ if either $a\prec b$ or $a=b$. 

\begin{definition}
A \emph{$\GL$-frame} $\krmod{F}$ is a tuple $\langle W, \prec \rangle$, where  $\prec$ is a transitive conversely well-founded relation on $W$. $\krmod{F}$ is said to be \emph{tree-like} if there is a root $r\in W$ such that for all $a\in W$, $r\preceq a$, and
furthermore each $a\neq r$ has a 
unique immediate $\prec$-predecessor $b$.
\end{definition}
\begin{definition}
A \emph{$\GL$-model} is a triple $\langle W, \prec, \Vdash \rangle$, where $\langle W, \prec \rangle$ is a $\GL$-frame, and 
 $\Vdash$ a valuation assigning to every propositional letter a subset of $W$. $\Vdash$ is extended to all formulas of $\mathcal{L}_{\Box}$  by 
requiring that it commutes with the propositional connectives, and interpreting $\prec$ as the accessibility relation for $\Box$:  
\begin{equation*}
\krmod{M}, a\Vdash \Box A  \mbox{ if for all } b \mbox{ with } a\prec b,
b \Vdash A. \end{equation*}
\end{definition}
Given $\krmod{M}= \langle W, \prec, \Vdash \rangle$, we write 
$\krmod{M}\Vdash A$ if $\krmod{M},a\Vdash A$ for every $a\in W$. We write 
$\krmod{F}\Vdash A$ if  $\krmod{M}\Vdash A$ for any model $\krmod{M}$ whose underlying frame is $\krmod{F}$.

\begin{theorem}\label{glmodcomp}
$\vdash_{\GL} A$ iff for every tree-like $\GL$-frame $\krmod{F}$,  $\krmod{F}\Vdash A$. \end{theorem}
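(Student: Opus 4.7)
The plan is to prove both directions, with soundness being the short half and completeness requiring a finite-model construction followed by unraveling.

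For soundness ($\Rightarrow$), I would verify that each $\GL$-axiom is valid on every tree-like frame and that necessitation and modus ponens preserve validity. Propositional tautologies and $\mathsf{K}$ hold on every Kripke frame. For $\mathsf{L}$, the converse well-foundedness of $\prec$ is essential: suppose for contradiction that $\krmod{M}, a\Vdash \Box(\Box A\to A)$ while $\krmod{M}, a\nVdash \Box A$. Then the set $S=\{b\succ a : b\nVdash A\}$ is non-empty, so by converse well-foundedness choose a $\prec$-maximal element $b\in S$. Every $c$ with $b\prec c$ lies outside $S$ and hence forces $A$, so $b\Vdash \Box A$; combined with $b\Vdash \Box A\to A$ (obtained from $a\prec b$ and transitivity, after noting that we may assume WLOG $a\prec b$), this gives $b\Vdash A$, a contradiction.

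For completeness ($\Leftarrow$), assume $\nvdash_{\GL} A$; I would build a finite tree-like countermodel. Let $\Phi$ be the closure of $\{A\}$ under subformulas and single negations; it is finite. Let $W_0$ consist of all maximal $\GL$-consistent subsets of $\Phi$, and define $\Gamma \prec_0 \Delta$ iff (i) for every $\Box B\in\Gamma$ both $B$ and $\Box B$ lie in $\Delta$, and (ii) there exists $\Box C\in\Delta$ with $\Box C\notin\Gamma$. Condition (i) together with $\vdash_{\GL}\Box B\to\Box\Box B$ makes $\prec_0$ transitive, while (ii) forces $\prec_0$ to be conversely well-founded, since the finite set $\{\Box C\in\Phi:\Box C\in\Gamma\}$ grows strictly along $\prec_0$. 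With valuation $\Gamma\Vdash p$ iff $p\in\Gamma$, one then proves a truth lemma: for every $B\in\Phi$ and every $\Gamma\in W_0$, $\Gamma\Vdash B$ iff $B\in\Gamma$. The Boolean cases are routine. The $\Box B$-case needs Lindenbaum extension: assuming $\Box B\notin\Gamma$, show the set $\{\neg B,\Box B\}\cup\{C,\Box C:\Box C\in\Gamma\}$ is $\GL$-consistent, then extend it to a maximal consistent $\Delta\subseteq\Phi$. Since $\nvdash_{\GL} A$ there is $\Gamma_0\in W_0$ with $\neg A\in\Gamma_0$, so $\Gamma_0\nVdash A$ in this model.

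Finally, to produce a tree-like frame, unravel the finite model from $\Gamma_0$: take as worlds finite $\prec_0$-increasing sequences $\langle\Gamma_0,\Gamma_1,\ldots,\Gamma_k\rangle$, ordered by proper extension, with propositional valuation pulled back from the last coordinate. The projection onto $W_0$ is a $p$-morphism, hence preserves truth of all modal formulas, so the root of the tree refutes $A$; transitivity and converse well-foundedness pass to the unraveled frame because each branch is finite and ordered by end-extension. The main obstacle is the $\Box B$-case of the truth lemma: establishing the consistency of $\{\neg B,\Box B\}\cup\{C,\Box C:\Box C\in\Gamma\}$, which is exactly the step where L\"ob's axiom $\mathsf{L}$ is indispensable and where the clause (ii) in the definition of $\prec_0$ is justified.
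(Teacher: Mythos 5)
Your proof is correct and is essentially the argument the paper points to: the paper's ``proof'' is just a citation to Chapter 5 of Boolos, and your construction (finite canonical model over maximal $\GL$-consistent subsets of the subformula/negation closure, the accessibility relation with the two clauses, the truth lemma whose $\Box$-case uses L\"ob's axiom to certify consistency of $\{\neg B,\Box B\}\cup\{C,\Box C:\Box C\in\Gamma\}$, followed by unraveling via a $p$-morphism to obtain a tree-like frame) is exactly the standard proof found there. All the key steps, including where $\mathsf{L}$ and converse well-foundedness are actually used, are correctly identified.
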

\begin{proof}
See for example Chapter 5 of \cite{Boo93}. 
\end{proof}

\begin{definition} \label{glt}
The axiom schemata of 
$\GLT$ include all propositional tautologies in the language $\mathcal{L}_{\Box\bigtriangleup}$, 
the rules and axiom schemata of $\GL$ for $\triangle$, as well as:
\begin{flalign*}
\mathsf{(K^{\Box})}\;\; & \Box(A\to B) \to (\Box A\to \Box B) \\
\mathsf{(T1)}\;\; & \triangle A\to \Box A \\
\mathsf{(T2)}\;\; & \Box A\to \triangle\Box A \\
\mathsf{(T3)}\;\; & \Box A\to \Box\triangle A \\
\mathsf{(T4)}\;\; & \Box \triangle A\to \Box A &
\end{flalign*}
\end{definition}

It is not difficult to show that $\vdash_{\GLT} \Box(\Box A\to A)\to \Box A$.  
Thus $\GLT$ contains
$\GL$ for both $\triangle$ and $\Box$.

Given a tree-like $\GL$-frame $\langle W, \prec \rangle$ and $a, b\in W$, we write $a\prec_{\infty} b$ if the set $\{c\mid a\prec c \prec b\}$ is infinite. It is shown in \cite{Lin06} that $\GLT$ is sound and complete with respect to the class all of tree-like $\GL$-frames, with $\prec$ and $\prec_{\infty}$ as the accessibility relations for $\triangle$ and $\Box$ respectively.
We consider a slightly different semantics for $\GLT$ that --- while arguably less neat than the one just described --- has the advantage of allowing us to work with finite models. Indeed, it was introduced by  
Lindstr\"om in order to obtain decidability of $\GLT$.

\begin{definition}\label{gltmodel}
For $A\in \mathcal{L_{\Box\triangle}}$, an
\textit{$A$-sound model} is a quadruple $\langle W, \prec, \prec_{R}, \Vdash \rangle$, where
\begin{enumerate}
\item $\langle W, \prec \rangle$ is a finite tree-like $\GL$-frame
\item $a \prec_{R} b \,\Rightarrow\, a \prec b$
\item $a \prec b \prec_{R} c\, \Rightarrow \, a \prec_{R} c$
\item $a \prec_{R} b \prec c \,\Rightarrow \, a \prec_{R} c$
\item if $a\prec_{R}b$, there is some $c$ with $a\prec_{R}c \preceq b$, and such that  \label{refnode}
$c\Vdash \triangle B\to B$ for every subformula $B$ of $A$. Such a node $c$ shall be referred to as 
 \emph{reflexive}. 
\end{enumerate}
Finally, $\Vdash$ is a valuation satisfying the usual clauses, with $\prec$ and $\prec_R$ as the accessibility relations for $\triangle$ and $\Box$ respectively. 
\end{definition}

\begin{lemma}{\cite[Lemma 9]{Lin06}}\label{modalcomp}
Let $n$ be the cardinality of the set of subformulas of $A$.
$\vdash_\GLT A$ iff $\krmod{M}\Vdash A$ for every $A$-sound model of cardinality $\leq n^{n^2+1}$. 
\end{lemma}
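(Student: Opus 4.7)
The proof splits into soundness (if $\vdash_\GLT A$ then every $A$-sound model validates $A$) and completeness (if $\nvdash_\GLT A$ then a small $A$-sound countermodel exists). The bound on cardinality enters only in the completeness direction.

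For soundness I would argue by induction on the length of a derivation. Propositional tautologies, modus ponens, and necessitation are routine. The $\GL$-axioms for $\triangle$ are valid because $\langle W,\prec\rangle$ is finite, transitive, and converse well-founded. The $\mathsf{K}^\Box$ axiom is immediate from interpreting $\Box$ by $\prec_R$. Axiom $\mathsf{T1}$ is exactly condition 2 of Definition~\ref{gltmodel}; axioms $\mathsf{T2}$ and $\mathsf{T3}$ correspond to conditions 3 and 4 respectively. The subtle case is $\mathsf{T4}$: assume $a\Vdash\Box\triangle B$ and $a\prec_R b$. By condition 5, pick a reflexive $c$ with $a\prec_R c\preceq b$. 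Since $a\Vdash\Box\triangle B$ and $a\prec_R c$, we have $c\Vdash\triangle B$. If $c=b$, then from $c\Vdash\triangle B\to B$ we obtain $b\Vdash B$; if $c\prec b$, then $c\Vdash\triangle B$ already gives $b\Vdash B$ directly. Hence $a\Vdash\Box B$.

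For completeness I would adapt the standard $\GL$ step-by-step construction. Let $\Sigma$ be the closure of $\{A\}$ under subformulas and single negation. A \emph{$\Sigma$-type} is a maximal $\GLT$-consistent selection from $\Sigma$. Starting from a $\Sigma$-type containing $\neg A$, I would build a finite tree whose nodes are labeled by $\Sigma$-types. At a node $a$ labelled $X_a$, for every $\neg\triangle B\in X_a$ I create a $\prec$-successor labelled by a $\Sigma$-type that is $\GLT$-consistent with $\{\neg B\}\cup\{\triangle C,\,C:\triangle C\in X_a\}$ (possible by Löb's axiom for $\triangle$). For $\neg\Box B\in X_a$ I similarly create a $\prec_R$-successor, using axioms $\mathsf{T2}$ and $\mathsf{T3}$ to propagate $\Box$- and $\triangle$-formulas. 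The Löb condition for $\triangle$ guarantees that the set of true $\triangle$-formulas strictly increases along $\prec$-chains, bounding the depth of $\prec$-chains by $|\Sigma|$; together with the branching factor (at most one successor per negated $\triangle$- or $\Box$-formula in $\Sigma$) this yields the quoted combinatorial bound $n^{n^2+1}$.

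The main obstacle, and the reason the semantics uses two relations rather than one, is enforcing condition 5: between any $a\prec_R b$ there must lie a reflexive $c$ satisfying $\triangle B\to B$ for every $B\in\Sigma$. I would handle this by interleaving an auxiliary reflexive node $c$ into the construction along every $\prec_R$-arrow. Consistency of the label of $c$ is where $\mathsf{T4}$ is really used: given $\{\triangle C,C:\triangle C\in X_a,\,\mathrm{etc.}\}$ one must check that extending by $\{\triangle B\to B:B\in\Sigma\}$ remains $\GLT$-consistent, which follows because $\GLT$ proves $\triangle(\bigwedge_{B\in\Sigma}(\triangle B\to B))$ as an instance of Löb's axiom. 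Verifying that the tree so constructed is in fact an $A$-sound model, that its root falsifies $A$ via the Truth Lemma applied to $\Sigma$, and that the counting argument produces a model of cardinality at most $n^{n^2+1}$, then completes the proof.
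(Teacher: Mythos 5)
The paper does not prove this lemma at all --- it is quoted from Lindstr\"om's paper --- so your proposal can only be judged on its own merits, and there it has a genuine gap in the direction you treat as ``routine''. Soundness for $A$-sound models \emph{cannot} be established by induction on the length of a derivation, because an $A$-sound model does not validate all theorems of $\GLT$. Condition 5 of Definition \ref{gltmodel} only forces the interpolated reflexive node $c$ to satisfy $\triangle B\to B$ for \emph{subformulas $B$ of $A$}, so an instance of $\mathsf{T4}$ on a formula outside the subformula closure of $A$ can fail in the model (take a two-point model $a\prec_R b$ with $b$ reflexive only for subformulas of $A$, and a letter $p$ not in $A$ that is false at the terminal node $b$: then $a\Vdash\Box\triangle p$ vacuously but $a\nVdash\Box p$). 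Since a $\GLT$-derivation of $A$ may pass through such formulas, the induction does not close. The standard repair, and the route Lindstr\"om takes, is to prove soundness of $\GLT$ on the \emph{genuine} semantics (tree-like $\GL$-frames with $\prec$ and $\prec_{\infty}$, where every axiom is valid for every formula) and then show that any finite $A$-sound countermodel can be unravelled into such a frame --- replacing each reflexive node by an infinite $\prec$-chain of copies --- in a way that preserves the truth values of all subformulas of $A$; condition 5 is exactly what makes this blow-up truth-preserving. Your proposal is missing this idea entirely.

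There is a second, more local error in the completeness half: you justify the consistency of the reflexive node's label by claiming that $\GLT$ proves $\triangle\bigl(\bigwedge_{B\in\Sigma}(\triangle B\to B)\bigr)$ ``as an instance of L\"ob's axiom''. That sentence is not a theorem: if $\GLT\vdash\triangle(\triangle B\to B)$ then L\"ob's axiom for $\triangle$ immediately yields $\GLT\vdash\triangle B$, which is absurd for, say, $B$ a propositional letter. What is actually needed at this point is the contrapositive of $\mathsf{T4}$, $\lnot\Box\lnot C\to\lnot\Box\lnot\,\triangleDual C$, iterated to manufacture a consistent type that is ``reflexive for $\Sigma$'' between $a$ and its $\prec_R$-successor. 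The overall architecture of your completeness argument (types over the subformula closure, L\"ob bounding the $\prec$-depth, a counting argument for $n^{n^2+1}$) is plausible, but as written both the soundness direction and the key consistency step for the interpolated reflexive nodes would fail.
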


\subsection{Arithmetical interpretations of modal logic}\label{ArInt}
In order to formulate the connection between $\GLT$ and our arithmetical provability predicates, we use the notion of an 
 \emph{arithmetical realization}.

\begin{definition}\label{realization}
Let $\fofl{Pr}_0$ and $\fofl{Pr}_1$ be provability predicates in the language $\mathcal{L}$ of arithmetic. An arithmetical realization $*$ is an assignment of $\mathcal{L}$-sentences to all modal formulas. The values of $*$ at propositional letters of the modal language can be arbitrary.
It is required that $*$ commutes with 
the propositional connectives, and furthermore
$(\Box A)^*:= \fofl{Pr}_0(\gnmb{A^*})$ and
$(\triangle A)^* = \fofl{Pr}_1(\gnmb{A^*})$. 
\end{definition}
We note that the values of an arithmetical realization $*$ are determined by its values at the propositional letters of $\mathcal{L}_{\Box\triangle}$.
\begin{definition}
Let $\fofl{Pr}_0$ and $\fofl{Pr}_1$ be provability predicates for a theory $\thr{T}$ containing $\IDOE$.
We say that $\GL$ ($\GLT$) is \emph{sound} for $\fofl{Pr}_0$ (and $\fofl{Pr}_1$) if:
$$\vdash_{\GL(\GLT)} A  \;\;\; \Rightarrow   \;\;\;\thr{T} \vdash A^* \mbox{ for all arithmetical realizations } *.$$
We say that $\GL$ ($\GLT$) is \emph{complete} for $\fofl{Pr}_0$ (and $\fofl{Pr}_1$) if:
$$ \thr{T}\vdash A^* \mbox{ for all arithmetical realizations } * \;\;\;\Rightarrow \;\;\; \vdash_{\GL(\GLT)} A.$$
If $\GL$ ($\GLT$) is both sound and complete with respect to $\fofl{Pr}_0$ (and $\fofl{Pr}_1$), we say that $\GL$ ($\GLT$) is the \emph{provability logic} of $\fofl{Pr}_0$ (and $\fofl{Pr}_1$). This means that $\GL$ ($\GLT$)  contains exactly those principles of provability in $\thr{T}$ --- as given by $\fofl{Pr}_0$ and $\fofl{Pr}_1$ respectively --- 
that can be verified in $\IDOE$.
\end{definition}

\begin{theorem}\label{glarcomp}
Let $\thr{T}\supseteq\IDOE $ be recursively axiomatizable and $\Sigma_1$-sound. Then $\GL$ is the provability logic of $\fofl{Pr}_{\thr{T}}$.  
\end{theorem}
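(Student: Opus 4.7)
The plan is to prove the two directions separately: soundness is a routine verification, while completeness invokes Solovay's classical construction. For soundness, I would argue by induction on $\GL$-derivations. Propositional tautologies translate to propositional tautologies over $\mathcal{L}$-sentences, hence are provable in $\thr{T}$. The axiom schema $\mathsf{(K)}$ is provable in $\IDOE$ by Hilbert--Bernays--L\"ob condition \ref{hbl2} (stated in Section \ref{meta_IDOE}). L\"ob's axiom $\mathsf{(L)}$ is provable in $\IDOE$ by the formalized L\"ob's theorem already noted after Theorem \ref{diagonal_lemma}. The modus ponens and necessitation rules are preserved using condition \ref{hbl1}.

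For completeness, I would use the contrapositive: given $\nvdash_{\GL}A$, construct an arithmetical realization $*$ such that $\thr{T}\nvdash A^*$. By Theorem \ref{glmodcomp}, there is a finite tree-like $\GL$-frame $\langle W,\prec\rangle$ with root $1$ and valuation $\Vdash$ such that $1\nVdash A$. Identify $W$ with $\{1,\ldots,n\}$ and adjoin a fresh bottom node $0$ with $0\prec i$ for all $i\in W$, obtaining $W^+=\{0,1,\ldots,n\}$. By the diagonal lemma (Theorem \ref{diagonal_lemma}), define a primitive recursive Solovay function $h:\mathbb{N}\to W^+$ by $h(0)=0$ and $h(k{+}1)=j$ if $j\in W$, $h(k)\prec j$ in $W^+$, and there is a $\thr{T}$-proof of length at most $k$ of the sentence $\lim_m h(m)\neq j$; otherwise $h(k{+}1)=h(k)$. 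Let $L_i$ be the $\Sigma_2$-sentence $\lim_m h(m)=i$, and set $p^* := \bigvee\{L_i\mid i\in W,\, i\Vdash p\}$ for each propositional letter, extending $*$ to all formulas as in Definition \ref{realization}.

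The next step is to verify the standard Solovay lemmas inside $\thr{T}$: namely
\begin{enumerate}
\item $\thr{T}\vdash \bigvee_{i\in W^+} L_i$ (the limit exists);
\item $\thr{T}\vdash L_i \to \fofl{Pr}_{\thr{T}}\bigl(\bigvee_{j\succeq i,\, j\in W}L_j\bigr)$ for each $i\in W^+$;
\item for $i\in W$, $\thr{T}+L_i$ is consistent, in fact $\thr{T}\vdash L_i\to \neg\fofl{Pr}_{\thr{T}}(\neg L_i)$;
\item $\Sigma_1$-soundness of $\thr{T}$ yields $\mathbb{N}\models L_0$.
\end{enumerate}
These are the standard properties of Solovay's construction; each follows from the definition of $h$ by an internal case analysis, using $\Sigma_1$-completeness of $\thr{T}$ plus the fact that $h$ can move only upward along $\prec$. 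With these in hand, I would prove by induction on the modal formula $B$ that for every $i\in W$, $\IDOE\vdash L_i\to B^*$ if $i\Vdash B$, and $\IDOE\vdash L_i\to\neg B^*$ if $i\nVdash B$. The atomic and Boolean cases are immediate from the definition of $*$ together with the mutual exclusivity of the $L_j$. In the modal case for $\Box B$, the positive direction uses (2) combined with the induction hypothesis, while the negative direction uses (3) applied to a witness $j\succ i$ with $j\nVdash B$.

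Applying this to $A$ at the root $1$ gives $\thr{T}\vdash L_1\to\neg A^*$. If $\thr{T}\vdash A^*$ as well, then $\thr{T}\vdash\neg L_1$, which by (3) applied to $i=1$ would contradict the consistency of $\thr{T}$ (or, spelling it out, $\fofl{Pr}_{\thr{T}}(\neg L_1)$ is a true $\Sigma_1$-sentence while (3) says $L_1\to\neg\fofl{Pr}_{\thr{T}}(\neg L_1)$, so $\mathbb{N}\models\neg L_1$, which is fine, but combined with the fact that $\thr{T}+L_1$ is consistent we contradict $\thr{T}\vdash\neg L_1$). The expected main obstacle is the careful formalization of the Solovay lemmas (1)--(3) inside $\IDOE$; this requires that the partial-recursive definition of $h$ be given by a $\Sigma_1$-formula as in Section \ref{IDOErecfun} so that the provable $\Sigma_1$-completeness of $\thr{T}$ can be invoked. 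Since the construction is entirely classical and the machinery (diagonal lemma, HBL conditions, provable $\Sigma_1$-completeness, partial truth definitions) has already been set up in Section \ref{ArTheories_Section}, I would present this as a sketch and cite a standard reference such as \cite{Boo93} for the detailed bookkeeping.
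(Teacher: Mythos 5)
Your overall strategy (soundness by the HBL conditions plus formalized L\"ob, completeness by the Solovay embedding of a finite tree-like countermodel) is the right one, and it is worth noting that the paper itself gives no argument here: its ``proof'' consists of citing Solovay \cite{Sol76} for $\thr{T}=\PA$ and de Jongh--Jumelet--Montagna \cite{Dej91} for the extension to $\Sigma_1$-sound r.e.\ extensions of $\IDOE$. So you are reconstructing exactly the argument the paper delegates to the literature. However, two of your four ``standard Solovay lemmas'' are misstated in ways that break the proof as written. Your item (3), $\thr{T}\vdash L_i\to\neg\fofl{Pr}_{\thr{T}}(\neg L_i)$ for $i\in W$, is not a Solovay lemma but its negation: for $i\neq 0$ the construction itself yields $\thr{T}\vdash L_i\to\fofl{Pr}_{\thr{T}}(\neg L_i)$, since $h$ moves to $i$ only when a proof of $L\neq i$ has appeared; combining your (3) with this would give $\thr{T}\vdash\neg L_i$ for every $i\in W$ and the endgame collapses. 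The lemma you actually need is $\thr{T}\vdash L_i\to\neg\fofl{Pr}_{\thr{T}}(\neg L_j)$ for $i\prec j$, and the consistency of $\thr{T}+L_1$ is then obtained by instantiating it at $i=0$ and using $\mathbb{N}\models L_0$ together with the soundness of $\IDOE$ --- not by ``applying (3) at $i=1$'' as in your closing paragraph. Similarly, your item (2) with $\bigvee_{j\succeq i}L_j$ is too weak: in the positive direction of the truth lemma for $\Box B$ at $i$ you only know $j\Vdash B$ for $j\succ i$ (the frame is irreflexive), so you need the strict version $\thr{T}\vdash L_i\to\fofl{Pr}_{\thr{T}}\bigl(\bigvee_{j\succ i}L_j\bigr)$ for $i\neq 0$, which requires the extra step of combining provable $\Sigma_1$-completeness with the fact that the move to $i$ was triggered by a proof of $\neg L_i$.

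A second, more substantive gap is your treatment of the base theory. You present the formalization of the Solovay lemmas in $\IDOE$ as routine ``bookkeeping'' to be outsourced to \cite{Boo93}, but Boolos works over $\PA$, where $\Sigma_2$-induction is freely available for reasoning about the limit of $h$; carrying the argument out over $\IDOE$ for an arbitrary $\Sigma_1$-sound r.e.\ $\thr{T}\supseteq\IDOE$ is precisely the non-trivial content of \cite{Dej91}, which is why the paper cites it separately. Relatedly, in your final step you should make explicit that $\mathbb{N}\models L_0$ is derived without assuming soundness of $\thr{T}$ beyond $\Sigma_1$: if the true limit were $i\neq 0$ one concludes $\thr{T}\vdash\bigvee_{j\succ i}\exists m\,(h(m)=j)$, a $\Sigma_1$-sentence false in $\mathbb{N}$, contradicting $\Sigma_1$-soundness. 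With the two lemmas corrected and either a citation of \cite{Dej91} or a genuine verification that the argument lives in $\IDOE$, your sketch becomes a correct, self-contained version of what the paper handles by reference.
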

\begin{proof}
The case where $\thr{T}=\PA$ was proven by Solovay in \cite{Sol76}. Extension to $\Sigma_1$-sound recursively axiomatizable theories containing $\IDOE$ is due to de Jongh, Jumelet, and Montagna in \cite{Dej91}. 
\end{proof}


Throughout the rest of this section, we use the symbols $\Box$ and $\triangle$ for 
both provability predicates and modalities. Since lower-case Greek letters range over arithmetical and upper-case Latin letters over modal sentences, the intended meaning will always be clear from the context. 
With this notation, we also imply that we are interested in arithmetical realizations interpreting the \emph{modalities} $\triangle$ and $\Box$ as the corresponding provability predicates. Bearing this in mind we shall, from now on, mostly skip referring to the explicit formalization of Definition \ref{realization}.

\subsection{Arithmetical soundness and completeness} 
We prove the following arithmetical completeness theorem.

\begin{theorem}\label{GLTcompl}
Let $\Box$ and $\triangle$ be $\Sigma_1$-provability predicates numerating the 
same sound theory $\thr{T}$ extending $\IDOE$, i.e.\ for all $\varphi$,
\begin{enumerate}
\item $\thr{T}\vdash \varphi$ if and only if $\IDOE\vdash \Box\varphi$
\item $\model{N}\vDash \Box \varphi\leftrightarrow \triangle \varphi$, where $\model{N}$ is the standard model
\end{enumerate}
Assume furthermore that $\GLT$ is sound for $\Box$ and $\triangle$. Then $\GLT$ is also complete for $\Box$ and $\triangle$.
\end{theorem}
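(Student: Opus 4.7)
The plan is to run a Solovay-style arithmetical completeness argument adapted to two modalities. Suppose $\nvdash_{\GLT}A$. By Lemma~\ref{modalcomp} there is a finite $A$-sound model $\krmod{M}=\langle W,\prec,\prec_R,\Vdash\rangle$ and a node $a_0\in W$ with $a_0\nVdash A$. I would adjoin a fresh root $0$ below $W$, setting $0\prec' i$ and $0\prec'_R i$ for every $i\in W$; the resulting structure $\krmod{M}^{+}$ still satisfies clauses (1)--(5) of Definition~\ref{gltmodel}, since the new edges from $0$ can be taken to pass through $a_0$ (or any other reflexive node of $\krmod{M}$) chosen as a witness. The target is a realization $*$ with $\thr{T}\nvdash A^{\ast}$; setting $p^{\ast}:=\bigvee\{S_i : i\in W,\,i\Vdash p\}$, where $S_i$ abbreviates ``$\lim h=i$'' for a recursive function $h$ to be defined, the usual induction on modal formulas will yield $\thr{T}\vdash S_{a_0}\to\lnot A^{\ast}$ and arithmetical soundness of $\thr{T}$ will finish the job.

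The Solovay function $h:\omega\to\{0\}\cup W$ is defined by the fixed-point version of the diagonal lemma (Theorem~\ref{diagonal_lemma}). The construction has two triggers, one for each modality: at stage $n{+}1$, if $h(n)=i$ and there is a $\triangle$-proof of $\lnot S_i$ of Gödel-number $<n$ that mentions only nodes $j$ with $i\prec j$, set $h(n{+}1)$ to such a $j$; analogously for $\Box$-proofs, with $i\prec_R j$ and with a forced passage through the reflexive witness $c$ of Definition~\ref{gltmodel}(5) before reaching $j$. Using the Hilbert--Bernays--Löb conditions for both $\Box$ and $\triangle$ together with the axioms $\mathsf{(T1)}$--$\mathsf{(T4)}$ of Definition~\ref{glt}, I would verify in $\IDOE$ the standard package of properties: exhaustiveness $\bigvee_{i\in\{0\}\cup W}S_i$ and mutual exclusivity of the $S_i$'s, the transition clauses $S_i\to\triangle\!\bigvee_{i\prec j}S_j$ and $S_i\to\Box\!\bigvee_{i\prec_R j}S_j$, the consistency clauses $S_i\to\triangleDual S_j$ for $i\prec j$ and $S_i\to\Diamond S_j$ for $i\prec_R j$, and for a reflexive $c$ the local reflection $S_c\to(\triangle B\to B)$ for every subformula $B$ of $A$. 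These are exactly the facts needed so that the inductive step for $\triangle B$ and $\Box B$ goes through. Since $\thr{T}$ is sound, $h$ does not move in $\model{N}$, so $S_0$ is true; hence $\thr{T}+S_0$ is consistent, and from $S_0\to\Diamond S_{a_0}$ (provable from clause~\ref{iteration_facts_1_2} of Lemma~\ref{iteration_facts_1} applied to the dual of $\Box$) together with $S_{a_0}\to\lnot A^{\ast}$ we get $\thr{T}\nvdash A^{\ast}$.

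The main obstacle is clause~\ref{refnode} of Definition~\ref{gltmodel}: each $\prec_R$-jump is required to traverse a reflexive node, and it is this feature that separates the $\Box$-accessibility from the $\triangle$-accessibility in the semantics. My remedy is to factor every $\Box$-triggered transition $i\to j$ into two stages, first settling $h$ at a designated reflexive witness $c$ with $i\prec_R c\preceq j$, and only at a later stage allowing a further move to $j$; the second move is itself guarded by a $\triangle$-proof, which exists because $i\prec_R c\prec j$ implies $c\prec j$ in $\krmod{M}^{+}$. The arithmetic of the two triggers must be organised so that they cannot conflict: since $\Box$ and $\triangle$ numerate the same theory $\thr{T}$, any $\Box$-proof eventually becomes a $\triangle$-proof (in the sense of the standard model) but not $\IDOE$-provably so, and the axioms $\mathsf{(T2)}$--$\mathsf{(T4)}$ are precisely what is needed to keep the $\IDOE$-verifications of the bullet points above consistent with each other. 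Once this bookkeeping is in place, the remainder of the proof is the routine Solovay induction.
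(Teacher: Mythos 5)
Your proposal follows essentially the same route as the paper's proof: a single Solovay function with two triggers, $\prec$-moves guarded by $\triangle$-proof witnesses and $\prec_R$-moves guarded by $\Box$-proof witnesses and routed through the reflexive nodes of Definition~\ref{gltmodel}(\ref{refnode}), followed by the standard package of limit, transition, consistency and $\Sigma_1$-completeness lemmas and the truth lemma; this matches Definition~\ref{solovay} and Lemmas~\ref{limitlemma}--\ref{box} almost clause for clause, including your two-stage factoring of $\prec_R$-jumps, which is exactly how Lemma~\ref{diamond} uses $\mathsf{(T3)}$. One detail needs repair: you add $0\prec_R i$ for \emph{every} $i\in W$ and assert that clause~(\ref{refnode}) survives, but if no node $\preceq a_0$ is reflexive this is false, and your concluding step $S_0\to\Diamond S_{a_0}$ leans on that edge. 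The paper adds only $\prec$-edges from the new root and instead derives $\thr{T}\nvdash L\neq a_0$ from $\model{N}\vDash \triangleDual\, L=a_0$ (Lemma~\ref{nabla}) combined with hypothesis~(2), the coincidence of $\Box$ and $\triangle$ in the standard model (Lemma~\ref{truth}) --- which is also the place where that hypothesis does real work; your citation of Lemma~\ref{iteration_facts_1} there is spurious.
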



\begin{remark}
With the above interpretation of $\Box$, we depart from the convention that $\Box$ stands for the ordinary provability predicate of $\PA$. 
\end{remark}

\begin{lemma}
Let $\{\thr{S}_n\}_{n\in\omega}$ be a recursive sequence of finitely axiomatizable theories such that 
$\PA$ proves $\PA=\bigcup_{n\in\omega}\thr{S}_n$. Write $\triangle$ for $\Box_{\thr{S}_x, \fast_{\varepsilon_0}}$, and $\Box$ for the usual provability predicate of $\PA$. 
Then $\PA$ verifies the following, for all $\varphi$: 
\begin{enumerate}
\item $\triangle\varphi \to \Box \varphi$ \label{eins}
\item $\Box \varphi \to \triangle \Box\varphi $ \label{zwei}
\item $\Box \varphi \to \Box \triangle \varphi$ \label{drei}
\item $\Box \triangle \varphi \to \Box\varphi$ \label{vier}
\end{enumerate}
\end{lemma}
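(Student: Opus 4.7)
The plan is to dispatch each of the four items by either a short direct argument or by citing a result already established in the paper, since all the real work has been done in Sections~\ref{slowprovability} and \ref{Converting_Section}.

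For item \ref{eins}, I would argue directly from the definitions. By assumption $\PA$ proves $\PA=\bigcup_{n\in\omega}\thr{S}_n$, which (unwinding the standard formalization) gives $\PA\vdash \forall n \forall \psi(\Box_{\thr{S}_n}\psi\to \Box\psi)$. Since $\triangle\varphi$ is $\Box_0\varphi \lor \exists y\le x(\Box_{\thr{S}_y}\varphi\land \conv{\fast_{\varepsilon_0}(x)})$, and $\Box_0\varphi\to \Box\varphi$ holds because $\IDOE\subseteq \PA$, we obtain the implication immediately (in fact verifiable already in $\IDOE$, but we only claim $\PA$).

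For item \ref{zwei}, I would appeal to the fact that $\triangle$, being the standard $\Sigma_1$-provability predicate for the r.e.\ theory $\PAsl{\fast_{\varepsilon_0}}\supseteq \IDOE$, satisfies provable $\Sigma_1$-completeness in $\IDOE$ (Section~\ref{ArTheories_Section}). Since $\Box\varphi$ is a $\Sigma_1$-formula (in the free variable coded by $\varphi$), $\IDOE$ proves $\Box\varphi\to \triangle\Box\varphi$, which gives the claim in $\PA$.

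Items \ref{drei} and \ref{vier} are the essential content and have already been proven. Item \ref{drei} is exactly Corollary~\ref{slowcon_inv} applied to the given sequence $\{\thr{S}_n\}_{n\in\omega}$. Item \ref{vier} is Theorem~\ref{slowcon} applied to the same sequence. The main (non-routine) obstacle in this entire cluster of facts is \ref{vier}, and it has been absorbed into the earlier theorem; no further obstacle remains at this point. So the write-up is essentially a four-line proof: two one-line direct verifications plus two citations.
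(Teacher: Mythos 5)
Your proposal is correct and matches the paper's own proof essentially line for line: item 1 from the definition, item 2 from provable $\Sigma_1$-completeness of $\triangle$, item 3 by citing Corollary \ref{slowcon_inv}, and item 4 by citing Theorem \ref{slowcon}. No issues.
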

\begin{proof}
Item \ref{eins} is clear from the definition, item \ref{zwei} follows by provable $\Sigma_1$-completeness of $\triangle$, \ref{drei} is Corollary \ref{slowcon_inv} in Section \ref{Converting_Section}, and \ref{vier} is Theorem \ref{slowcon} in Section \ref{slowprovability}. 
\end{proof}
By Remark \ref{shifting} in Section \ref{slowprovability}, it follows that the requirements of Theorem \ref{GLTcompl} are satisfied when taking for $\Box$ the usual provability predicate of $\PA$, and for $\triangle$ any of the slow provability predicates $\BoxSl_{z}$. We note that the requirements of Theorem \ref{GLTcompl} are also satisfied when taking for $\triangle$ the usual provability predicate of $\PA$, and for $\Box$ the provability predicate for $\PA$ together with Parikh's rule\footnote{Lindstr\"om's proof of arithmetical completeness of $\GLT$ with respect to ordinary provability $\Box$ and Parikh provability $\Box_{\mathsf{p}}$ made essential use of the fact that 
$\PA \vdash \forall \varphi \left( \Box_{\mathsf{p}} \varphi\leftrightarrow \Box^{\omega} \varphi \right)$.
While, as follows from the results of Section \ref{Converting_Section}, the same relation holds between $\Box$ and $\BoxSl_2$, it fails for $\Box$ and $\BoxSl_1$, where we only have  
$\PA \vdash \forall \varphi \left(\Box \varphi \leftrightarrow \BoxSl_1^{\varepsilon_0} \varphi\right)$.
Our proof method does not rely on $\triangle$ being a certain ordinal iteration of $\Box$, and is therefore applicable to a wider class of predicates.}.


We assume all $\Sigma_1$-sentences to be of the form $\exists y\, \psi$, with 
$\psi$ a $\Delta_0$-formula. If $\varphi$ is a $\Sigma_1$-sentence, we write $n: \varphi$ to mean that $n$ is a witness for $\varphi$, i.e.\ if 
$\psi(\overline{n})$ holds. We also assume: 
\begin{enumerate}
\item if $n: \Box\varphi$, then for any $\psi\neq \varphi$ it is not the case that $n:\Box\psi$ \label{teine}
\item if there is some $n$ with $n:\Box \varphi$, then there are arbitrarily large $n'$ with $n':\Box \varphi$ \label{kolmas}
\end{enumerate}
The above requirements hold for any reasonable arithmetization of syntax in arithmetic. For every $\Sigma_1$-formula $\varphi$, there exists a formula $\exists y\, \psi$ satisfying 
\ref{teine} and \ref{kolmas} that is $\IDOE$-provably equivalent to 
$\varphi$; thus the above assumption does not restrict us in any way.

Let $\thr{T}$, $\Box$ and $\triangle$ be as in the statement of Theorem \ref{GLTcompl}. 
Our proof of arithmetical completeness proceeds, as usual, 
by constructing a
suitable Solovay function moving along the accessibility relations of a $\GLT$-model. 
For the remainder of this section, fix some $A$-sound model $\krmod{M}=\langle W, \prec, \prec_{R}, \Vdash \rangle$. We assume that $\krmod{M}$ has a root, i.e.\  that there is a
node $0\in W$ such that $0\prec a$ for every $0\neq a\in W$. We also assume as given $\mathcal{L}$-formulas representing $\krmod{M}$ in $\IDOE$ in a natural way.
\begin{definition}{\textup($\IDOE$\textup)}\label{solovay} 
Define the primitive recursive function $h:\omega \to W$:
\begin{align*}
h(0)&=0\\
h(n+1) &= \begin{cases} b & \mbox{ if }  h(n)\prec_{R}b, \,b \mbox{ is reflexive}, \mbox{ and } n:\Box L\neq b, \mbox{ else:}\\
c & \mbox{ if } h(n)\prec c\;  \mbox{ and } n: \triangle L\neq c\\
h(n) & \mbox{ otherwise } \end{cases} 
\end{align*}
For $a\in W$, $L= a$ is the formula expressing that $a$ is the limit of $h$. 
As usual, the apparent circularity in the definition of $h$ is dealt with by using the Diagonal Lemma (see \cite{Sol76} or \cite{Boo93}). 
\end{definition}

The function $h$ starts at the root $0$, and moves along the relations $\prec$ and $\prec_{R}$. It makes an  $\prec_{R}$-step to some node $b$ only if there is a $\Box$-proof that it will not stay there, and it makes an $\prec$-step to some node $c$ only if there is a $\triangle$-proof that it will not stay there.
We refer to the elements of the domain of $h$ as \emph{stages}, saying for example that $h$ moved to $a$ at stage $n$ if $h(n-1)\neq h(n)=a$.

\begin{lemma}\label{limitlemma}{\textup($\IDOE$\textup)} $h$ has a unique limit. 
\end{lemma}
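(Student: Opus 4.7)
The plan is to exploit the finiteness of the $A$-sound model $\krmod{M}=\langle W,\prec,\prec_R,\Vdash\rangle$ together with the converse well-foundedness of $\prec$. Uniqueness is essentially immediate: $h$ is a (total) primitive recursive function of $n$, so at each stage $n$ it takes exactly one value in $W$, and therefore if $h$ eventually stabilizes at some $a\in W$, that $a$ is manifestly unique. The content of the lemma is thus the existence of a stage past which $h$ no longer changes.

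For existence, I would first observe that both kinds of moves allowed by Definition \ref{solovay} are moves along $\prec$: a $\prec_R$-step to a reflexive $b$ satisfies $h(n)\prec b$ because condition~2 of Definition \ref{gltmodel} gives $\prec_R\subseteq\prec$; a $\prec$-step is already along $\prec$. Since $\langle W,\prec\rangle$ is a finite tree-like $\GL$-frame, there is a (standard, externally fixed) number $M$ equal to the length of the longest $\prec$-chain in $W$; moreover, one can primitive recursively assign to each $a\in W$ its height $\mathrm{ht}(a)$, the length of the longest $\prec$-chain starting at $a$, with $\mathrm{ht}(a)<\mathrm{ht}(b)$ whenever $b\prec a$. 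Both $M$ and $\mathrm{ht}$ are given by bounded formulas in $\IDOE$, since $W$ is a fixed finite set represented by a $\Delta_0$ formula.

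I would then prove by bounded induction on $n$ (which is available in $\IDOE$) the inequality
\begin{equation*}
\mathrm{ht}(h(n+1)) \le \mathrm{ht}(h(n)),
\end{equation*}
with strict inequality whenever $h(n+1)\neq h(n)$. The three clauses of Definition \ref{solovay} handle this case-by-case: in the "otherwise" clause $h(n+1)=h(n)$; in the $\prec_R$-clause $h(n)\prec h(n+1)$; in the $\prec$-clause also $h(n)\prec h(n+1)$. Consequently the number of stages $n\le N$ at which $h(n+1)\neq h(n)$ is bounded by $M$, which by a further bounded induction yields that $h$ changes value at most $M$ times in total. Hence there exists a least stage $N$ with $h(m)=h(N)$ for all $m\ge N$, and $h(N)$ is the (unique) limit of $h$.

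The only delicate point — and it is not really an obstacle — is checking that the case analysis above can be carried out in $\IDOE$ using the natural bounded formulas defining $W$, $\prec$, $\prec_R$, the reflexive nodes, and $h$ itself; this is routine because $\krmod{M}$ is a fixed finite object standing outside the formalization, and the recursion defining $h$ is elementary (the Diagonal Lemma reference in Definition \ref{solovay} only introduces a fixed arithmetical formula whose behaviour is governed by provable $\Sigma_1$-completeness of $\Box$ and $\triangle$, which are available in $\IDOE$).
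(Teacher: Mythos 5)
Your proposal is correct and follows essentially the same route as the paper: both arguments rest on the observation that $\prec_R\subseteq\prec$ makes every move of $h$ a move along $\prec$, so that non-termination would produce an infinite ascending $\prec$-chain, which is impossible in a (finite) $\GL$-frame. Your explicit height function is just a more hands-on way of invoking the converse well-foundedness that the paper cites directly, and it makes the $\IDOE$-formalizability slightly more transparent.
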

\begin{proof}
Since $a\prec_{R}b$ implies $a\prec b$, it is clear from the definition that $h$ moves along the $\prec$-relation in $\mathcal{M}$. Since $\prec$ is transitive, we thus have that
$h(x)\preceq h(y)$ whenever $x<y$. If $h$ were to keep moving forever, there would be an infinite ascending $\prec$-chain in $\krmod{M}$, contradicting the assumption that $\langle W, \prec\rangle$ is a $\GL$-frame. 
\end{proof} 
Let $\varphi_{h}(x,y)$ be an $\mathcal{L}$-formula representing $h$ in $\IDOE$ in a natural way. According to Lemma \ref{limitlemma}, \begin{equation}\label{defexp}
\IDOE\vdash \exists ! y\,\exists x_0 \forall x\geq x_0\, \varphi_{h}(x,y).
\end{equation}
In the remainder of this section, we work in a definitional expansion of $\IDOE$ that contains a term $L$ denoting the limit of $h$. We note that by (\ref{defexp}) such a definitional expansion is a conservative extension of $\IDOE$.
\begin{lemma}{\textup($\IDOE$\textup)}\label{move} If $h(n)=a$, then $a\preceq L$. 
\end{lemma}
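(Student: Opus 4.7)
The plan is to exploit the monotonicity of $h$ along $\prec$ and then use the existence of a limit. The argument is conceptually simple, so most of the work is book-keeping inside $\IDOE$.

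First I would observe a monotonicity fact: for every $m$, either $h(m+1) = h(m)$ or $h(m) \prec h(m+1)$. Indeed, inspection of Definition \ref{solovay} shows that a nontrivial transition at stage $m+1$ is either of the form $h(m) \prec_R h(m+1)$ or of the form $h(m) \prec h(m+1)$, and in the first case condition~2 of Definition \ref{gltmodel} gives $h(m) \prec h(m+1)$ as well. This is a $\Delta_0$ property once $L$ is available as a term, so it is verifiable in $\IDOE$.

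Next I would prove by induction on $m$ (for $m \ge n$) the claim $a \preceq h(m)$. The base case $m = n$ is the hypothesis $h(n) = a$. For the inductive step, if $h(m+1) = h(m)$ then we are done; otherwise $h(m) \prec h(m+1)$, and combining this with $a \preceq h(m)$ via transitivity of $\prec$ yields $a \prec h(m+1)$, whence $a \preceq h(m+1)$. The formula $a \preceq h(m)$ is bounded (as $h$ is a primitive-recursive function given the term $L$ and $W$ is finite), so this induction is available in $\IDOE$.

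Finally, by Lemma \ref{limitlemma} and (\ref{defexp}), there exists $m_0$ such that $h(m) = L$ for all $m \geq m_0$. Taking any $m \geq \max(n, m_0)$ and applying the claim gives $a \preceq h(m) = L$, as required. The only mildly delicate point is ensuring the bounded-induction on the monotonicity claim is legitimate in the definitional expansion of $\IDOE$ that contains the term $L$, but since $W$ is finite and $h$ is primitive recursive in $L$, this is routine.
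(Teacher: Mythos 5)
Your proof is correct and follows essentially the same route as the paper's: the key observation in both is that every nontrivial transition of $h$ is along $\prec$ (since $\prec_R\,\subseteq\,\prec$), so by transitivity $a\preceq h(m)$ for all $m\ge n$, and hence $a\preceq L$; the paper simply cites the monotonicity already noted in the proof of Lemma \ref{limitlemma} instead of redoing the induction. (One minor quibble: $h$ is primitive recursive outright via the Diagonal Lemma, not ``primitive recursive given the term $L$'', but this does not affect the argument.)
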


\begin{proof}
If $h(n)=a$, then either $h$ stays at $a$, i.e.\ we have $L=a$, or it moves away from $h$, in which case all 
its further values are $\prec$-successors of $a$, and so $a\prec L$
(as in the proof of Lemma \ref{limitlemma}).
\end{proof} 

\begin{lemma}{\textup($\IDOE$\textup)}\label{nabla}
If $ L=a$ and $a\prec b$, then $\triangleDual L=b$.
\end{lemma}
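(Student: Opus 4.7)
The plan is to derive a contradiction by running $h$ past a witness for $\triangle(L\neq b)$ while $h$ has already stabilized at $a$. Reason inside the definitional expansion of $\IDOE$. Assume toward contradiction that $\lnot\triangleDual L{=}b$, i.e.\ $\triangle(L\neq b)$ holds; since this is a $\Sigma_1$-sentence there is a numerical witness, and by the arithmetization convention listed just before Definition~\ref{solovay} (arbitrarily large witnesses exist once one exists), I can pick $n$ as large as I want with $n:\triangle(L\neq b)$.

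On the other hand, from $L=a$ and the definition of $L$ as the (unique) limit of $h$ I obtain a stage $m_0$ such that $h(m)=a$ for every $m\ge m_0$. Choose $n\ge m_0$ with $n:\triangle(L\neq b)$. Now examine $h(n+1)$ via the defining cases of Definition~\ref{solovay}. If the first clause applies, $h(n+1)$ is some reflexive $\prec_R$-successor of $a$; since $a\prec_R c$ implies $a\prec c$, we get $a\prec h(n+1)$. If the first clause does not apply, the second clause \emph{must} apply, because $b$ itself is a candidate: $h(n)=a\prec b$ and $n:\triangle L\neq b$. Thus $h(n+1)=c$ for some $c$ with $a\prec c$. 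Either way $h(n+1)\neq a$ and $a\prec h(n+1)$.

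To finish, invoke Lemma~\ref{move} at stage $n+1$: $h(n+1)\preceq L = a$. Combined with $a\prec h(n+1)$, this yields $a\prec a$, contradicting the irreflexivity of $\prec$ (which holds because $\langle W,\prec\rangle$ is conversely well-founded). Hence the assumption $\triangle(L\neq b)$ is false in $\IDOE$, i.e.\ $\triangleDual L{=}b$.

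The only delicate point is the verification that the argument is internal to $\IDOE$: the predicate ``$n$ is a witness of a $\Sigma_1$-sentence'' is bounded, $h$ is primitive recursive (elementary), and the stabilization of $h$ together with Lemma~\ref{move} is already available in $\IDOE$, so the schematic use of ``pick $n\ge m_0$ witnessing \dots'' becomes the $\IDOE$-provable statement that a $\Sigma_1$-formula with arbitrarily large witnesses has one past any given bound; this is the main thing to double-check, but it is immediate from the conventions (\ref{teine})--(\ref{kolmas}) on the chosen arithmetization.
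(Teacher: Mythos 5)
Your proof is correct and follows the same strategy as the paper's: take a witness $n:\triangle(L\neq b)$ beyond the stabilization stage of $h$ and observe that the definition of $h$ then forces a move away from $a$, contradicting $L=a$. You are in fact slightly more careful than the paper in noting that the move might go through the first clause of Definition~\ref{solovay} rather than to $b$ itself, but the contradiction is obtained the same way in either case.
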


\begin{proof}  Assume $L=a$ and $a\prec b$, and let $t$ be such that $\forall x\geq t\, h(x)=a$.
Suppose for a contradiction that $\triangle L\neq b$ holds. Then there also exists some 
some $n\geq t$ with $n: \triangle L\neq b$. But this means that $h$ moves away from $a$ to $b$ at stage $n$, contradicting our assumption that $L=a$. 
\end{proof}

\begin{lemma}{\textup($\IDOE$\textup)}\label{diamond}
If $L=a$ and $a\prec_{R}b$, then $\Diamond L=b$. 
\end{lemma}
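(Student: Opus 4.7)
The proof copies the structure of Lemma~\ref{nabla} almost verbatim, with the first clause of Definition~\ref{solovay} (the $\prec_R$-clause) playing the role that the $\prec$-clause plays there. Specifically, I assume $L = a$ and $a \prec_R b$ and fix $t$ with $h(x) = a$ for all $x \geq t$. Supposing for contradiction that $\Box L \neq b$, the fact that $\Box L \neq b$ is $\Sigma_1$ together with our standing assumption that $\Sigma_1$-witnesses are cofinal yields an $n \geq t$ with $n : \Box L \neq b$.

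At stage $n+1$ the first clause in the definition of $h$ inspects whether there is some $b'$ with $h(n) \prec_R b'$, $b'$ reflexive, and $n : \Box L \neq b'$. Taking $b' = b$, the first and third conditions hold by hypothesis and by our choice of $n$; if $b$ is moreover reflexive, the clause fires and $h(n+1) = b$, contradicting $L = a$. So when $b$ is itself reflexive the proof is already complete, and closely parallels the proof of Lemma~\ref{nabla}.

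The one point requiring care is the case where $b$ is not reflexive, for then the $\prec_R$-clause of $h$ cannot be applied with $b' = b$ directly. Here the plan is to invoke item~5 of Definition~\ref{gltmodel}, which supplies a reflexive $c$ with $a \prec_R c \preceq b$, and then route the argument through $c$. To transfer the obstruction $\Box L \neq b$ from $b$ to the reflexive node $c$, I intend to exploit the soundness assumption on $(\Box,\triangle)$ in Theorem~\ref{GLTcompl} --- in particular the $\GLT$-axioms T3 and T4 linking $\Box$ and $\triangle$, which in the arithmetical setting correspond to Corollary~\ref{slowcon_inv} and Theorem~\ref{slowcon} --- together with Lemma~\ref{nabla} applied at the pair $(c, b)$ (since $a \prec b$, so $\triangleDual L = b$ is already available). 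The main technical obstacle is precisely this reduction to the reflexive case; the reflexive case itself is routine and follows the preceding lemma almost word-for-word.
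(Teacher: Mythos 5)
Your proposal is correct and follows essentially the same route as the paper: both proofs handle the case of a reflexive target exactly as in Lemma~\ref{nabla}, and both reduce the general case to a reflexive $c$ with $a\prec_R c\preceq b$ supplied by item~5 of Definition~\ref{gltmodel}, combining Lemma~\ref{nabla} at the pair $(c,b)$ with axiom $\mathsf{T3}$ (the paper argues directly, first establishing $\Diamond L=c$ and then inferring $\Diamond\triangleDual L=b$ and $\Diamond L=b$, whereas you run the same chain contrapositively, pulling $\Box L\neq b$ back to $\Box L\neq c$ so that the first clause of the Solovay function fires at $c$). Only $\mathsf{T3}$ is actually needed for the transfer, not $\mathsf{T4}$, and the relevant hypothesis for invoking Lemma~\ref{nabla} at $(c,b)$ is $c\prec b$ rather than $a\prec b$; these are cosmetic slips that do not affect the argument.
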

\begin{proof}  
Assume $L=a$ and $a\prec_{R}b$. By properties of $\krmod{M}$, there exists some reflexive $c$
with $a\prec_{R} c$ and $c\preceq b$. Arguing as in the proof of Lemma \ref{nabla}, 
we have that $\Diamond L=c$. If $c=b$, we are done. If $c\prec b$, we have 
$L=c \to \triangleDual L=b$ by Lemma \ref{nabla}, whence also 
\begin{equation*}
\Diamond L=c \to \Diamond \triangleDual L=b
\end{equation*}
by modal reasoning, using the soundness of $\GL$ for $\Box$. An application of modus ponens yields 
$\Diamond \triangleDual L=b$, and therefore $\Diamond L=b$
by principle $\mathsf{T3}$ of $\GLT$. 
\end{proof}
\begin{lemma}{\textup($\IDOE$\textup)}\label{sigmaone}
If $L=a$, then $\triangle\, a\preceq  L$.
\end{lemma}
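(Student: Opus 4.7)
The plan is to reduce the claim to provable $\Sigma_1$-completeness of $\triangle$. Although the formula $a \preceq L$ involves the limit term $L$ and is therefore not $\Sigma_1$ at face value, it is implied by the $\Sigma_1$ sentence $\exists n\, h(n)=a$---this implication is precisely Lemma~\ref{move}. So the idea is to pass from the hypothesis $L=a$ to this $\Sigma_1$ witness, push the witness inside $\triangle$ by $\Sigma_1$-completeness, and then recover $\triangle(a \preceq L)$ by necessitating Lemma~\ref{move}.

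Concretely, I would reason in $\IDOE$ and assume $L=a$. By the definition of the limit of $h$, some stage $n$ satisfies $h(n)=a$, so the $\Sigma_1$ sentence $\exists n\, h(n)=a$ is true. Provable $\Sigma_1$-completeness of $\triangle$---which holds because $\triangle$ is a $\Sigma_1$ provability predicate extending $\IDOE$ satisfying the Hilbert--Bernays--L\"ob conditions verifiably in $\IDOE$ (Section~\ref{meta_IDOE})---then yields $\triangle(\exists n\, h(n)=a)$. Since Lemma~\ref{move} is proved in $\IDOE$, necessitation together with the distribution axiom for $\triangle$ gives $\triangle(\exists n\, h(n)=a) \to \triangle(a \preceq L)$. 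Combining the two yields the desired $\triangle(a \preceq L)$.

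I do not anticipate a genuine obstacle here: the argument is a standard Solovay-style trick, and the only real care required is to keep straight which reasoning happens externally in $\IDOE$ and which happens internally under $\triangle$. The earlier lemmas about $h$ were all proved in $\IDOE$, and the derivability conditions for $\triangle$ (including $\Sigma_1$-completeness) are $\IDOE$-verifiable, so both the external witness extraction and the internal recovery of $a \preceq L$ pass through without extra work.
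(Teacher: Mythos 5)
Your proposal is correct and matches the paper's proof: both extract the $\Sigma_1$ fact that $h$ attains the value $a$ at some stage, push it under $\triangle$ by provable $\Sigma_1$-completeness, and then apply the $\triangle$-necessitation of Lemma~\ref{move} together with distribution to recover $\triangle\, a\preceq L$. The only cosmetic difference is that the paper internalizes the specific instance $h(\dot{n})=a$ rather than the existential $\exists n\, h(n)=a$, which changes nothing.
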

\begin{proof}
From $L=a$ it follows that  $h(n)=a$ for some $n$. 
By $\Sigma_{1}$-completeness of $\triangle$, 
\begin{equation*}
\triangle h(\dot{n})=a
\end{equation*}
Since the theory defined by $\triangle$ contains $\IDOE$,  Lemma  \ref{move} gives us
\begin{equation*}
\triangle \left( h(\dot{n})=a \to a\preceq L \right).
\end{equation*}
Combining the above yields $\triangle\, a\preceq L$ as required.
\end{proof}

\begin{lemma}{\textup($\IDOE$\textup)}\label{notrefl}
If $a\neq 0$ is not reflexive, then 
$L=a$ implies $ \triangle\, a \prec L$.
\end{lemma}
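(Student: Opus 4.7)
The plan is to show that $h$ can reach the non-reflexive node $a$ only via a $\prec$-step, which by construction forces $\triangle(L \neq a)$; we then combine this with Lemma \ref{sigmaone}. Reasoning in $\IDOE$, suppose $L = a$ with $a \neq 0$ and $a$ not reflexive (the latter being a decidable property of the fixed finite model $\krmod{M}$, and hence provable outright). Since $h(0) = 0 \neq a$ and $h$ eventually stabilises at $a$ by Lemma \ref{limitlemma}, there is a least stage $n \geq 1$ with $h(n) = a$, so $h$ moved from $b := h(n-1) \neq a$ to $a$ at stage $n$.

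Inspecting Definition \ref{solovay}, this move proceeds either (i) via $b \prec_{R} a$ with $a$ reflexive and $n-1 : \Box\, L \neq a$, or (ii) via $b \prec a$ and $n-1 : \triangle\, L \neq a$. Clause (i) is excluded by the non-reflexivity of $a$, so clause (ii) must hold. The witness $n-1$ for the $\Sigma_1$-sentence $\triangle(L \neq a)$ yields $\triangle(L \neq a)$ by $\Sigma_1$-completeness of $\IDOE$. Invoking Lemma \ref{sigmaone} gives $\triangle(a \preceq L)$, and since $\triangle$ satisfies the K derivability condition verifiably in $\IDOE$, together with the elementary fact that $(a \preceq L) \wedge (L \neq a) \to a \prec L$, we conclude $\triangle(a \prec L)$, as required.

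The only real thing to check is that the case analysis on how $h$ first attains $a$ is formalizable in $\IDOE$. This is routine: $h$ is primitive recursive and the defining clauses of $h$ are explicit and exhaustive, and the existence of a least such stage follows from the $\Sigma_1$-witness for $L = a$ together with the $\Delta_0$-least number principle available in $\IDOE$. The argument structurally mirrors the proofs of Lemmas \ref{nabla} and \ref{diamond}, the only new ingredient being the use of the non-reflexivity of $a$ to exclude a $\prec_{R}$-step.
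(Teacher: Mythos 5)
Your proof is correct and follows essentially the same route as the paper's: since $a$ is not reflexive, the move of $h$ into $a$ can only have been a $\prec$-step triggered by a witness for $\triangle\, L\neq a$, and combining this with $\triangle\, a\preceq L$ from Lemma \ref{sigmaone} gives $\triangle\, a\prec L$. (One minor quibble: concluding $\triangle\, L\neq a$ from the existence of the witness $n-1$ is just existential introduction, not $\Sigma_1$-completeness, but this does not affect the argument.)
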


\begin{proof}
If the limit of $h$ is some irreflexive $a\neq 0$, then 
$h$ must have moved to $a$ due to some number witnessing $\triangle L\neq a$.
By lemma \ref{sigmaone}, we also have $\triangle\, a\preceq L$. Combining these, we get 
$\triangle\,  a\prec L$.
\end{proof}

\begin{lemma}{\textup($\PA$\textup)}\label{box}
If $a\neq 0$, then $ L=a$ implies $\Box\,  a\prec_{R}L$.
\end{lemma}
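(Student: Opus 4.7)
The plan is to follow the template of the proofs of Lemmas~\ref{sigmaone} and~\ref{notrefl}, with Lemma~\ref{box} playing the role of the $\Box$-and-$\prec_R$ analogue of Lemma~\ref{notrefl}. Arguing in $\PA$, I would first unfold the hypothesis $L=a$: since $h$ stabilises at $a\neq 0$, there is a least stage $n$ with $h(n)=a$ and $h(n-1)=a'\neq a$. The move at stage $n$ is triggered by exactly one of the two clauses of Definition~\ref{solovay}: either (i) $a'\prec_R a$, the node $a$ is reflexive, and $(n-1){:}\,\Box(L\neq a)$, or (ii) $a'\prec a$ and $(n-1){:}\,\triangle(L\neq a)$. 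By $\Sigma_1$-completeness of the corresponding provability predicate, together with axiom $\mathsf{T1}$ in case~(ii), one obtains $\Box(L\neq a)$ in either case.

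Combining this with $\triangle(a\preceq L)$ from Lemma~\ref{sigmaone}, upgraded via $\mathsf{T1}$, gives $\Box(a\preceq L)$, and hence $\Box(a\prec L)$. At this point, the conclusion of Lemma~\ref{box} would already follow if the lemma were stated with $\prec$ in place of $\prec_R$, exactly parallel to Lemma~\ref{notrefl}.

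The hard step is upgrading $\Box(a\prec L)$ to $\Box(a\prec_R L)$. The plan is to argue inside $\Box$, formalising the definition of $h$ together with the structural conditions of Definition~\ref{gltmodel}. Having verified $\Box(L\neq a)$, the theory possesses a $\Box$-strength witness (as opposed to a merely $\triangle$-strength witness) refuting $L=a$, and this extra strength of the witness should single out an $\prec_R$-step on the path from $a$ to $L$. Concretely, using $\mathsf{T3}$ and $\mathsf{T4}$, whose conjunction gives the arithmetical equivalence $\Box\varphi\leftrightarrow\Box\triangle\varphi$, the provability of ``$h$ eventually leaves $a$'' can be routed through the $\prec_R$-clause of Definition~\ref{solovay}; condition~4 of Definition~\ref{gltmodel} then bridges from ``some $\prec_R$-successor of $a$ lies $\preceq L$'' to $a\prec_R L$.

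The main obstacle is that the witnesses naturally arising from $h$'s arrival at $a$ concern the $\prec_R$-\emph{predecessor} relation at $a$, not the $\prec_R$-\emph{successors} of $a$, so the Lemma~\ref{sigmaone}-style monotone transport along $h$'s trajectory does not apply directly. Bridging this will require a careful combination of the bimodal axioms $\mathsf{T3}$ and $\mathsf{T4}$, the existence of reflexive nodes guaranteed by condition~5 of Definition~\ref{gltmodel}, and provable $\Sigma_1$-completeness of $\Box$ applied to the formalised definition of $h$ --- an interplay with no counterpart in the classical unimodal Solovay construction.
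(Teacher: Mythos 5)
Your setup is right and matches the paper's: from Lemma \ref{sigmaone} and $\mathsf{T1}$ you get $\Box\, a\preceq L$, and from the fact that $h$ moved to $a\neq 0$ (via either clause, using $\mathsf{T1}$ in the $\triangle$-witness case) you get $\Box\, L\neq a$, hence $\Box\, a\prec L$. You also correctly isolate the hard step and correctly guess that $\mathsf{T4}$ is the essential tool. But the proposal stops exactly where the real work begins: the passage from $\Box\, a\prec L$ to $\Box\, a\prec_R L$ is only described as something that ``should'' follow from a ``careful combination'' of the axioms, and the mechanism you sketch --- that the $\Box$-strength of the witness refuting $L=a$ ``singles out an $\prec_R$-step'' --- does not work. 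Inside $\Box$, the function $h$ may perfectly well leave $a$ by the second ($\triangle$-witness) clause of Definition \ref{solovay} and land on some $c$ with $a\prec c$ but $a\nprec_R c$; nothing about the strength of the witness that brought $h$ \emph{to} $a$ constrains how it \emph{leaves} $a$, so there is no direct way to exhibit, provably in $\Box$, an $\prec_R$-successor of $a$ lying $\preceq L$ to which your appeal to condition 4 of Definition \ref{gltmodel} could be applied. You flag this obstacle yourself in your last paragraph, but flagging it is not resolving it.

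The paper's argument supplies the missing idea. Since $W$ is finite, it suffices to prove $\Box\, L\neq b$ separately for each $b$ with $a\prec b$ and $a\nprec_R b$, and this is done by induction on such $b$, assuming $\Box\, L\neq b'$ already established for all $b'$ with $a\prec b'\prec b$. One then reasons \emph{under} $\Box$: since $L\neq a$ and $L\neq b'$ for all such $b'$, the function $h$ must eventually leave the set $\{a\}\cup\{b'\mid a\prec b'\prec b\}$; let $c$ be its first value outside this set. If $c=b$, the move cannot have been an $\prec_R$-move (else $a\prec_R b$ would follow from condition 3 of Definition \ref{gltmodel}), so its witness gives $\triangle\, L\neq b$; if $c\neq b$, then $c\not\preceq b$ by the choice of $c$, and the argument of Lemma \ref{sigmaone} gives $\triangle\, c\preceq L$, hence again $\triangle\, L\neq b$. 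Stepping outside the box, this yields $\Box\triangle\, L\neq b$, and only now does $\mathsf{T4}$ convert it to $\Box\, L\neq b$. This two-level structure --- establish the weaker fact $\triangle\, L\neq b$ working inside $\Box$, then collapse $\Box\triangle$ to $\Box$ --- together with the induction over the finitely many offending $b$, is the actual content of the lemma, and it is absent from your proposal.
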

\begin{proof}
Assume $L=a$. By Lemma \ref{sigmaone}, we have $\triangle\, a\preceq L$, whence also $\Box\, a \preceq L$. Since $a\neq 0$ and $h$ moved  to $a$, it must be that $\Box L\neq a$.
It therefore suffices to show that if $b$ is such that $a\prec b$ but $a\nprec_{R}b$, then $\Box L\neq b$.
We assume that the claim has been established for all $b'$ with $a\prec b'\prec b$, i.e.\ that we have
 $\Box\, L\neq b'$ for all such $b'$. 

Since $L=a\neq 0$, $h$ must have moved to $a$ at some stage $t$.
Then either $\Box\, L\neq a$ or $\triangle\, L\neq a$ holds, but in either case $\Box\, L\neq a$. Argue in $\Box$: 

We have $h(t)=a$, $L\neq a$, and $L\neq b'$ for all $b'$ with $a\prec b'\prec b$. Thus $h$ has to eventually move away from any node in
$ \{a\}\cup \{b'\mid a\prec b'\prec b\}$. Consider the first stage where this happens, i.e.\ let $s>t$ be minimal 
with  $h(s) \notin \{a\}\cup \{b'\mid a\prec b'\prec b\}$, and let $c=h(s)$. Since $h(t)=a$ and $t<s$, we have $a\prec c$.
We consider two cases: 
\begin{enumerate}
\item $c=b$: the reason for moving could only have been that $s-1:\triangle\, L\neq b$. If the move had been
due to the first clause of Definition \ref{solovay}, we would  have $a\prec c\prec_{R}b$, and so $a\prec_{R}b$ by properties of $\krmod{M}$, contradicting our assumption that $a\nprec_{R}b$.

\item $c\neq b$: by the proof of Lemma \ref{sigmaone}, $h(s)=c$ implies 
$\triangle\, c\preceq L$. Our choice of $c$ excludes  $c\prec b$, and we have assumed $c\neq b$. Thus $\triangle L\neq b$ also in this case.
\end{enumerate}

Returning to the outside world, we have shown that $\Box\triangle\, L\neq b$. With principle $\mathsf{T4}$, we obtain $\Box\, L\neq b$ as desired.
\end{proof}

\begin{lemma}\label{truth}
Let $\model{N}$ be the standard model. Then 
\begin{enumerate}
\item $\model{N}\vDash L=0$
\item for any $a\in W$, the sentence $L = a$ is consistent with $\thr{T}$ \label{raherahe}
\end{enumerate}
\end{lemma}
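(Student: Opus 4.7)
Both items follow from the soundness of $\mathsf{T}$ together with the two compatibility assumptions of Theorem \ref{GLTcompl}; the non-trivial direction is item 1, and item 2 will be an easy consequence of item 1 plus Lemma \ref{nabla}.

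For item 1, I would argue by contradiction: suppose $\model{N}\vDash L=a$ for some $a\neq 0$. Since $h(0)=0$, there is a least stage $n$ at which $h(n)=a$, and the move from $h(n-1)$ to $a$ at stage $n$ must have been licensed by one of the two non-trivial clauses of Definition \ref{solovay}, namely $n-1:\Box L\neq a$ or $n-1:\triangle L\neq a$. In either case the witnessed $\Sigma_1$ sentence ($\Box L\neq a$ or $\triangle L\neq a$) is true in $\model{N}$, so by the first hypothesis of Theorem \ref{GLTcompl} (resp.\ combined with the second) we conclude $\mathsf{T}\vdash L\neq a$. But then soundness of $\mathsf{T}$ gives $\model{N}\vDash L\neq a$, contradicting the assumption.

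For item 2, I would split on whether $a=0$. If $a=0$, then by item 1 the sentence $L=0$ is true in $\model{N}$; hence $L\neq 0$ is false in $\model{N}$, so by soundness of $\mathsf{T}$ we have $\mathsf{T}\nvdash L\neq 0$, i.e.\ $\mathsf{T}+L=0$ is consistent. If $a\neq 0$, then since $\krmod{M}$ is rooted at $0$ we have $0\prec a$, and Lemma \ref{nabla} (with its first variable taken to be $0$ and its second $a$) yields
\[
\IDOE \vdash L=0 \to \triangleDual L=a.
\]
As $\IDOE$ is true in $\model{N}$ and, by item 1, $\model{N}\vDash L=0$, we obtain $\model{N}\vDash \triangleDual L=a$, i.e.\ $\model{N}\nvDash \triangle(L\neq a)$. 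The second compatibility assumption of Theorem \ref{GLTcompl} then gives $\model{N}\nvDash \Box(L\neq a)$, and the first assumption translates this into $\mathsf{T}\nvdash L\neq a$, which is exactly the consistency of $\mathsf{T}+L=a$.

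The main point of care is keeping the two provability predicates straight: the $\Sigma_1$-witnessing argument in item 1 yields truth of a provability statement in $\model{N}$, which must be converted to actual provability in $\mathsf{T}$ via assumption 1 (and, for the $\triangle$-case, also via assumption 2), and only then may the soundness of $\mathsf{T}$ be invoked to derive $\model{N}\vDash L\neq a$. No genuine obstacle arises beyond this bookkeeping, since the term $L$ introduced after \eqref{defexp} is conservatively added to $\IDOE$ and all earlier Solovay-type lemmas (in particular Lemma \ref{nabla}) are available as internal $\IDOE$-theorems.
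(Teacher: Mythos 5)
Your proof is correct and follows essentially the same route as the paper: item 1 by contradiction from the $\Sigma_1$-witness licensing $h$'s move, converted to $\thr{T}$-provability via the two hypotheses and refuted by soundness, and item 2 via Lemma \ref{nabla} applied at the root. Your explicit separate treatment of $a=0$ in item 2 is a minor (and welcome) refinement, since Lemma \ref{nabla} requires $a\prec b$ and so does not literally cover the root itself, a point the paper's own proof glosses over.
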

\begin{proof}
By  Lemma \ref{limitlemma} (and soundness of $\IDOE$), $\mbox{$\model{N}\vDash L=a$}$ holds for some $a$. If $a\neq 0$, then either  $\model{N}\vDash \Box\, L\neq a$ or $\model{N}\vDash \triangle\, L\neq a$, depending on how $h$ moved to $a$. 
Since $\Box$ and $\triangle$ have the same extension in $\model{N}$, we have $\model{N}\vDash \Box\, L\neq a$ in both cases, and thus 
 $\thr{T}\vdash L\neq a$. By soundness of $\thr{T}$, we get 
 $\model{N}\vDash L\neq a$, contradicting our assumption. Thus it must be that
 $a=0$. 
For \ref{raherahe}, note that by Lemma \ref{nabla} (and soundness of $\IDOE$) $\model{N}\vDash L=0\to \triangleDual L=a$ holds for all $a\in W$. Since $\model{N}\vDash L=0$, it follows that 
$\model{N}\vDash \triangleDual L=a$, i.e.\ $\model{N}\vDash \lnot \triangle\, L\neq a$ for any $a\in W$. 
Suppose that $\thr{T} \vdash L\neq a$. Then $\model{N}\vDash \Box L\neq a$ and thus also
 $\model{N}\vDash \triangle L\neq a$, a contradiction. 
 Thus we conclude that $\thr{T} \nvdash L\neq a$. 
\end{proof}

\begin{lemma}\label{truthlemma}
Define the arithmetical realization $*$ by: 
\begin{equation}
p^*:=\bigvee_{\krmod{M},a\Vdash p} L=a .
\end{equation}
Then for every subformula $B$ of $A$, and every $a\in \krmod{M}$, $a\neq 0$
\begin{equation}
\krmod{M}, a\Vdash B \;\;\; \Rightarrow\;\;\;  \thr{T} \vdash L=a \to  B^{*}.
\end{equation}
\end{lemma}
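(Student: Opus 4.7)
I would prove the lemma by induction on the complexity of $B$, simultaneously establishing the complementary direction: if $a \neq 0$ and $\mathcal{M}, a \not\Vdash B$, then $\thr{T} \vdash L = a \to \neg B^*$. Both halves are needed together, because the modal cases treat positive and negative occurrences via different lemmas from the preceding development.

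For the atomic case $B = p$: if $a \Vdash p$ then $L = a$ is literally one of the disjuncts of $p^*$, so the conclusion is immediate; if $a \not\Vdash p$ then every disjunct $L = b$ of $p^*$ has $b \neq a$, and the uniqueness of the limit of $h$ (Lemma \ref{limitlemma}), being verifiable already in $\IDOE$, yields $\thr{T} \vdash L = a \to L \neq b$ for each such $b$, hence $\thr{T} \vdash L = a \to \neg p^*$. The Boolean connectives are routine from the induction hypothesis.

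For $B = \triangle C$, the positive direction splits according to whether $a$ is reflexive. If $a$ is reflexive, then $a \Vdash \triangle C \to C$, so $c \Vdash C$ for every $c$ with $a \preceq c$; the induction hypothesis gives $\thr{T} \vdash L = c \to C^*$ for each such $c$, and necessitation and distribution for $\triangle$ produce $\thr{T} \vdash \triangle \bigvee_{c \succeq a} L = c \to \triangle C^*$, whose antecedent is supplied from $L = a$ by Lemma \ref{sigmaone}. If $a$ is not reflexive the same argument runs with $c \succ a$ in place of $c \succeq a$, using Lemma \ref{notrefl} in place of Lemma \ref{sigmaone}. For the negative direction, pick a $c$ with $a \prec c$ and $c \not\Vdash C$; the induction hypothesis gives $\thr{T} \vdash L = c \to \neg C^*$, and Lemma \ref{nabla} gives $\thr{T} \vdash L = a \to \triangleDual L = c$, so $\thr{T} \vdash L = a \to \neg \triangle C^*$. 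The case $B = \Box C$ goes through in the same style using Lemmas \ref{box} and \ref{diamond}: positively one combines the induction hypothesis with necessitation for $\Box$ and the antecedent $\Box \bigvee_{a \prec_R b} L = b$ delivered by Lemma \ref{box}; negatively one picks $b$ with $a \prec_R b$ and $b \not\Vdash C$, and combines the induction hypothesis $\thr{T} \vdash L = b \to \neg C^*$ with the $\Diamond L = b$ guarantee from Lemma \ref{diamond}.

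The main subtlety is the positive $\triangle C$ case at a non-reflexive node: Lemma \ref{sigmaone} alone only delivers $L = a \to \triangle(a \preceq L)$, whose $L = a$ disjunct one cannot discharge, because in the absence of reflexivity the modal truth $a \Vdash \triangle C$ does not entail $a \Vdash C$. Lemma \ref{notrefl} is precisely what allows one to sharpen $\preceq$ to $\prec$ and so restrict the induction hypothesis to strictly later nodes, where it is genuinely available.
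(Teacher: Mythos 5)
Your proof is correct and follows essentially the same route as the paper's: the standard Solovay-style induction establishing the positive and negative directions simultaneously, with the $\triangle$-case split on reflexivity handled via Lemmas \ref{notrefl} and \ref{sigmaone} exactly as in the paper, and the $\Box$-case via Lemmas \ref{box} and \ref{diamond}. The paper writes out only the $\triangle$-case and defers the rest to the standard argument, but your fuller treatment agrees with it in every detail, including the observation about why the irreflexive case needs the strict bound $\triangle\, a \prec L$.
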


\begin{proof}
This is proven in the standard manner by induction on the complexity of $B$, using Lemmas \ref{limitlemma}-\ref{box} (see e.g.\ \cite{Sol76} or \cite{Boo93}).  We treat the only slightly deviant case of $\triangle B$. 
Assume $a\Vdash \triangle B$, i.e.\ $b\Vdash B$ for all $b$ with $a\prec b$. 
By the induction assumption, we have 
$\thr{T}\vdash L=b \to B^*$
for all such $b$, and thus 
\begin{equation}\label{sausti}
\thr{T} \vdash a \prec L \to B^*.
\end{equation}
By modal reasoning, this implies 
\begin{equation}\label{kiili}
\thr{T}\vdash \triangle a \prec L \to \triangle B^*.
\end{equation}
Now argue in $\thr{T}$, assuming $L=a$.
If $a$ is irreflexive, then $L=a$ implies $\triangle a\prec L$ by Lemma \ref{notrefl}, and so $ \triangle B^*$ by (\ref{kiili}). If $a$ is reflexive, then we have $a\Vdash B$, whence the induction assumption additionally gives
 $L=a \to B^*$, and therefore $a\preceq L \to B^*$ and $\triangle a\preceq  L \to \triangle B^*$. With Lemma \ref{sigmaone}, $L=a$ implies $\triangle a\preceq L$, thus we obtain $\triangle B^*$ also in this case.
\end{proof}
We are now ready to combine the results of this section to prove Theorem \ref{GLTcompl}. 
\begin{proof}
We need to show that if $\nvdash_{\GLT} A$, there is some arithmetical realization $*$ with $\thr{T} \nvdash A^*$. 
Suppose that $\GLT\nvdash A$. By Lemma \ref{modalcomp}, let $\krmod{M}$ be an $A$-sound model with 
$\mathcal{M}, w\nVdash A$ for some $w\in \krmod{M}$.  We append a root $0$ to $\mathcal{M}$, and apply Definition \ref{solovay} to the resulting model.  By Lemma \ref{truthlemma}, $\thr{T} \vdash L=w\to \lnot A^*$. Therefore, since $\thr{T}\nvdash L\neq w$ by Lemma \ref{truth}, 
it must be that $\thr{T} \nvdash A^{*}$. 
\end{proof}

\subsection{Square root provability}\label{9squareroot}
We conclude with a characterization of the provability logic of square root and ordinary provability. In Section \ref{Sqrt_Section}, it was shown that there is a provability predicate $\Box_{\mathsf{r}}$ such that 
\begin{equation}\label{sinilind}
\PA\vdash\forall \varphi\,\left(\Box_{\mathsf{r}}\Box_{\mathsf{r}}\varphi\leftrightarrow  \Box \varphi\right),
\end{equation}
where $\Box$ denotes the usual provability predicate of $\PA$. 
By Theorem \ref{glarcomp}, $\Box_{\mathsf{r}}$ satisfies the Hilbert-Bernays-L\"ob derivability conditions (verifiably in $\IDOE$). As we will see, the  
equivalence in (\ref{sinilind}) is in fact sufficient for obtaining \emph{all} propositional schemata 
concerning $\Box$ and $\Box_{\mathsf{r}}$ that are provable in $\PA$. In the remainder of this section, we shall introduce 
the bimodal logic $\GLtwo$, and sketch the proof of its arithmetical completeness with respect to $\Box$ and $\Box_{\mathsf{r}}$.
\begin{definition}
The axiom schemata of $\GLtwo$ include all propositional tautologies in the language $\mathcal{L}_{\Box\triangle}$, 
the rules and axioms of $\GL$ for $\triangle$, as well as:
\begin{flalign*}
\mathsf{(2)}\;\;& \Box A \leftrightarrow \triangle\triangle A&
\end{flalign*}
\end{definition}

Given a tree-like $\GL$-frame $\langle W, \prec \rangle$ and $a, b\in W$, write $a\prec_{2} b$ if there is some $c$ with $a\prec c\prec b$. 

\begin{lemma}\label{gltwomodal}
$\GLtwo$ is sound and complete with respect to the class of tree-like $\GL$-frames, with $\prec$ and $\prec_{2}$ as the accessibility relations for $\triangle$ and $\Box$ respectively.
\end{lemma}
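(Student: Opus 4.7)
The plan is to reduce completeness of $\GLtwo$ to completeness of $\GL$ by means of a translation that eliminates $\Box$ in favour of $\triangle\triangle$. For soundness, the $\GL$-axioms and rules for $\triangle$ are handled by Theorem \ref{glmodcomp} applied to $\triangle$ alone, so the only novelty is axiom $\mathsf{(2)}$. But since $a \prec_2 c$ is by definition the existence of $b$ with $a \prec b \prec c$, one has $\mathcal{M}, a \Vdash \triangle\triangle A$ iff for every $b$ with $a \prec b$ and every $c$ with $b \prec c$ one has $\mathcal{M}, c \Vdash A$, which in turn is equivalent to $\mathcal{M}, a \Vdash \Box A$ under the proposed semantics. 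Thus $\Box A \leftrightarrow \triangle\triangle A$ is valid in every tree-like $\GL$-frame.

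For completeness, I would define a translation $(\cdot)^\circ \colon \mathcal{L}_{\Box\triangle} \to \mathcal{L}_\triangle$ that acts as the identity on propositional variables, commutes with the Boolean connectives and with $\triangle$, and sends $\Box B$ to $\triangle\triangle B^\circ$. A straightforward induction on $B$, whose only nontrivial case is $B = \Box C$ and invokes axiom $\mathsf{(2)}$, yields $\GLtwo \vdash B \leftrightarrow B^\circ$. A parallel semantic induction then shows that in any tree-like $\GL$-frame $\mathcal{F}$ with a valuation and any world $a$, $\mathcal{F}, a \Vdash B$ under the $\GLtwo$-semantics iff $\mathcal{F}, a \Vdash B^\circ$ under the standard $\GL$-semantics; once more, the nontrivial case is $B = \Box C$, and it reduces to the same identity already used for soundness.

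Now suppose $\nvdash_{\GLtwo} A$. Then $\nvdash_{\GLtwo} A^\circ$ by the syntactic equivalence, and since every $\GL$-derivation formulated with $\triangle$ is also a $\GLtwo$-derivation (the $\GL$-axioms and rules for $\triangle$ are literally among those of $\GLtwo$), we obtain $\nvdash_\GL A^\circ$. Theorem \ref{glmodcomp} supplies a tree-like $\GL$-frame $\mathcal{F}$ with a valuation and a world $a$ at which $A^\circ$ fails under the standard semantics; by the semantic translation above, $\mathcal{F}, a \nVdash A$ under the $\GLtwo$-semantics, completing the proof. The one place where care is needed is the syntactic and semantic induction on the translation under arbitrarily nested modalities, but once these are in hand the reduction to Theorem \ref{glmodcomp} is immediate.
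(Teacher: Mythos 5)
Your argument is correct and is the natural way to discharge what the paper leaves as an ``easy exercise'': soundness reduces to the identity $\prec_2\,=\,\prec\circ\prec$ together with soundness of $\GL$ for $\triangle$, and completeness follows by the translation $\Box B\mapsto\triangle\triangle B^\circ$ combined with Theorem~\ref{glmodcomp}. Both inductions (syntactic, via axiom $\mathsf{(2)}$ plus necessitation and $\mathsf{K}$ for $\triangle$, and semantic, via the same relational identity) go through without difficulty, so there is nothing to add.
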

\begin{proof}
Easy exercise.
\end{proof}

It is proven in \cite[Section 7]{Vis90} that $\GLtwo$ is the joint provability logic of $\IDOE$-provability and cut-free provability in $\IDOE$.

\begin{theorem}
$\GLtwo$ is the provability logic of $\Box_{\recfun{r}}$ and $\Box$. 
\end{theorem}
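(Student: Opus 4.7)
The plan is to handle soundness by direct verification of each axiom and to reduce completeness to Solovay's theorem for $\GL$ applied to $\Box_{\recfun{r}}$.

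For soundness, the $\mathsf{K}$-axiom, L\"ob's axiom, and the necessitation rule for $\triangle$ are valid under the interpretation $\triangle \mapsto \Box_{\recfun{r}}$ because $\Box_{\recfun{r}}$ is a $\Sigma_1$-provability predicate for the r.e.\ theory $\PAsl{\recfun{r}}$ and hence satisfies the Hilbert-Bernays-L\"ob derivability conditions verifiably in $\IDOE$ (Section \ref{meta_IDOE}). The sole bimodal axiom $\mathsf{(2)}$, namely $\Box A \leftrightarrow \triangle\triangle A$, translates to $\Box\varphi \leftrightarrow \Box_{\recfun{r}}\Box_{\recfun{r}}\varphi$, which is precisely the content of Theorem \ref{Square_root_theorem}.

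For completeness, I would use the syntactic translation $A \mapsto A^{\triangle}$ obtained by replacing every occurrence of $\Box B$ in $A$ by $\triangle\triangle B$. Using axiom $\mathsf{(2)}$ one checks that $\GLtwo \vdash A \leftrightarrow A^{\triangle}$. Moreover, $\GLtwo$ is conservative over $\GL$ for pure $\triangle$-formulas: this follows from Lemma \ref{gltwomodal} together with the observation that on any tree-like $\GL$-frame the truth set of $\triangle\triangle B$ under the accessibility $\prec$ coincides with the truth set of $\Box B$ under $\prec_{2}$. Consequently, if $\GLtwo \nvdash A$ then $\GL \nvdash A^{\triangle}$. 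Applying Theorem \ref{glarcomp} to $\Box_{\recfun{r}}$, which is a $\Sigma_1$-provability predicate numerating the sound theory $\PA$, we obtain an arithmetical realization $*$ with $\PA \nvdash (A^{\triangle})^{*}$. By repeated application of Theorem \ref{Square_root_theorem} inside $\PA$, the sentences $(A^{\triangle})^{*}$ and $A^{*}$ are $\PA$-provably equivalent: each substituted block $\Box_{\recfun{r}}\Box_{\recfun{r}} B^{*}$ is $\PA$-equivalent to $\Box B^{*}$, and the replacement propagates through the Boolean and modal contexts using the derivability conditions for $\Box$ and $\Box_{\recfun{r}}$. Hence $\PA \nvdash A^{*}$, as desired.

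The only step that demands any real care is the conservativity of $\GLtwo$ over $\GL$ for $\triangle$-only formulas. I expect this to be essentially immediate, since on a fixed tree-like frame $(W,\prec)$ the values of $\Box B$ read via $\prec_{2}$ and of $\triangle\triangle B$ read via $\prec$ are literally identical as subsets of $W$. A more self-contained alternative, avoiding even this mild detour, is to run the Solovay construction for $\Box_{\recfun{r}}$ directly on a tree-like countermodel to $A$, define $p^{*} := \bigvee_{a \Vdash p} L = a$ as in the proof of Theorem \ref{GLTcompl}, and carry out the usual truth lemma; the $\Box$-case then reduces to the $\triangle\triangle$-case by a single application of Theorem \ref{Square_root_theorem}. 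Either way no new combinatorial obstacle arises beyond what Solovay's original argument already handles.
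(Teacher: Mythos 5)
Your proposal is correct, and your primary route is genuinely different from the paper's. The paper proves completeness by running the Solovay construction for $\Box_{\recfun{r}}$ directly on a $\GLtwo$ countermodel (a tree-like frame read with $\prec$ and $\prec_2$), establishing the truth lemma for $\triangle$-only formulas, and then extending it to $\Box$-subformulas using $\krmod{M}\Vdash \Box B\leftrightarrow\triangle\triangle B$ together with $\PA\vdash \Box_{\recfun{r}}\Box_{\recfun{r}}B^*\leftrightarrow\Box B^*$ --- i.e.\ exactly the ``self-contained alternative'' you sketch at the end. Your main argument instead treats unimodal arithmetical completeness (Theorem \ref{glarcomp}, applied to $\Box_{\recfun{r}}$ as the standard predicate of the $\Sigma_1$-sound r.e.\ theory $\PAsl{\recfun{r}}$, which has the same theorems as $\PA$) as a black box and reduces the bimodal problem to it via the translation $A\mapsto A^{\triangle}$; the back-translation $\PA\vdash (A^{\triangle})^*\leftrightarrow A^*$ goes through by induction using Theorem \ref{Square_root_theorem} and the derivability conditions, as you say. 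This buys a proof with no new Solovay machinery at all, at the cost of some translation bookkeeping; the paper's version follows the standard template that also covers situations (such as Theorem \ref{GLTcompl}) where the bimodal axioms do not define $\Box$ outright in terms of $\triangle$. One small logical point: the step you flag as needing ``real care'' is not actually the one you need. To pass from $\GLtwo\nvdash A^{\triangle}$ to $\GL\nvdash A^{\triangle}$ you only need the trivial inclusion of $\GL$ (formulated for $\triangle$) into $\GLtwo$, which is immediate from Definition of $\GLtwo$; the conservativity of $\GLtwo$ over $\GL$ on $\triangle$-formulas that you establish via Lemma \ref{gltwomodal} is the converse implication, which is true but not used. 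This slip is harmless, since the implication you ultimately assert holds for the simpler reason.
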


\begin{proof}

Suppose that $\nvdash_{\GLtwo} A$. By Lemma  \ref{gltwomodal}, there is a $\GLtwo$ model 
$\krmod{M}=\langle W, \prec, \Vdash\rangle$
with $\krmod{M}\nVdash A$. 
By applying the usual Solovay construction for $\GL$, 
we find for all $a\in \krmod{M}$ a statement $L=a$ in the language of arithmetic, such that for all modal sentences $B$ not containing any occurrences of $\Box$:
\begin{equation}\label{loomad}
a\Vdash B\;\;\; \Rightarrow \;\;\; \PA\vdash L=a \to B^*,
\end{equation}
where $*$ is an arithmetical realization\footnote{$p^*$ is defined to be $\bigvee_{\krmod{M}, a\Vdash p}L=a$. For more details, see \cite{Sol76}, or the proof of Theorem \ref{GLTcompl} in the previous section.} mapping the modality $\triangle$
to square root provability, i.e.\ to $\Box_{\mathsf{r}}$. 
Now, using that $\krmod{M}\Vdash \triangle\triangle B\leftrightarrow \Box B$ on the modal side, and $\PA\vdash \Box_{\recfun{r}}\Box_{\recfun{r}} B^*\leftrightarrow \Box B^*$ on the arithmetical side, it is easy to check that (\ref{loomad}) can be extended to all $B\in \mathcal{L}_{\Box\triangle}$ (with $*$ mapping $\Box$ to ordinary provability). Using this, $\PA\nvdash A^{*}$ follows by the usual argument. 
\end{proof}

\section{Open Problems}
\begin{enumerate}
\item In the present paper we have shown that there exist two large groups of slow provability predicates of the form $\Box_{\thr{S}_n,\fast_{\varepsilon_0}}$: the predicates that are $\omega$-``roots'' and $\varepsilon_0$-``roots'' of the ordinary provability in $\PA$. For which ordinals $\alpha\in (\omega,\varepsilon_0)$ there exist  sequence $\thr{S}_n$ such that $\PA$ proves that and  $\bigcup\limits_{n\in \omega} \thr{S}_n=\PA$ and for every $\varphi$ we have $\Box_{\thr{S}_n,\fast_{\varepsilon_0}}^{\alpha}\varphi\mathrel{\leftrightarrow}\Box \varphi$?
\item In theorems \ref{Square_root_theorem} , \ref{omega_root_theorem}, and \ref{Epsilon_note_root_theorem} we have established  a correspondence between ordinary and slow provability. One could also considered slow variants of $\BoxI[n]$. Would the analogues of the mentioned theorem holds for $\BoxI[n]$ and its slow variant?
\item In Section \ref{ProvLogic_Section} we have proved that Lindstr\"om Logic is the provability logic of pairs of provability predicates from a rather large family. One could easily generalize Lindstr\"om logic to a polymodal case with linearly ordered family of modal connectives  by stating all the copies of axioms of Lindstr\"om logic, where $\triangle$ correspond to a modality with a smaller index and $\Box$ to a modality with larger index. Would the analogous of Theorem \ref{GLTcompl} holds for this logics?
\item  It is easy to see that any pair of provability predicates of the form $(\triangle,\triangle^{\alpha})$, where $\alpha\ge \omega$ satisfy the conditions of Theorem \ref{GLTcompl}. Is it true that for every pair of provability predicates $(\triangle,\Box)$ that satisfy conditions of Theorem \ref{GLTcompl} we can find elementary well-ordering and $\alpha$ from it such that $\Box$ is equivalent to $\triangle^{\alpha}$?
\item For which natural $n>2$ there exist a recursive $\recfun{f}$ such that $\PA$ proves that for every $\varphi$ we have $\Box_{\recfun{f}}^n\varphi\mathrel{\leftrightarrow}\Box\varphi$? 
\end{enumerate}


\subsubsection*{Acknowledgements}
We are grateful to Volodya Shavrukov whose insights were indispensable for proving Theorem \ref{GLTcompl}. We wish to thank Lev Beklemishev for inspiring discussions, and his advice on simplifying some of the arguments.

\bibliographystyle{model1-num-names}
\bibliography{bibliography}
\end{document}